\DeclareMathAlphabet{\mathpzc}{OT1}{pzc}{m}{it}
\let\hat=\widehat
\let\tilde=\widetilde
\numberwithin{equation}{subsection}
\newtheorem{theorem}{Theorem} 
\newtheorem*{question}{Question} 
\newtheorem{proposition}[equation]{Proposition}
\newtheorem{lemme}[equation]{Lemma}
\newtheorem{corollaire}[equation]{Corollary}
\theoremstyle{remarque}
\newtheorem{remarque}[equation]{Remark}
\theoremstyle{fact}
\newtheorem{fact}[equation]{Fact}
\newtheorem{definition}[equation]{Definition}
\DeclareMathOperator{\res}{\mathbf{res}} 
\DeclareMathOperator{\ob}{ob} 
\DeclareMathOperator{\rk}{rk} 
\DeclareMathOperator{\Gr}{Gr}
\DeclareMathOperator{\DR}{DR}
\DeclareMathOperator{\Hom}{Hom}
\DeclareMathOperator{\ord}{ord}
\DeclareMathOperator{\Max}{Max}
\DeclareMathOperator{\id}{id}
\DeclareMathOperator{\Ker}{Ker}
\DeclareMathOperator{\Spec}{Spec}
\DeclareMathOperator{\reduit}{red}
\def\cartesien{\ar@{}[rd]|{\square}}
\DeclareMathOperator{\RHom}{RHom}
\DeclareMathOperator{\Ext}{Ext}
\DeclareMathOperator{\spe}{sp}
\DeclareMathOperator{\GL}{GL}
\DeclareMathOperator{\Aut}{Aut}
\DeclareMathOperator{\Gal}{Gal}
\DeclareMathOperator{\mon}{mon}
\DeclareMathOperator{\Frac}{Frac}
\DeclareMathOperator{\End}{End}
\DeclareMathOperator{\univ}{univ}
\DeclareMathOperator{\Sing}{Sing}
\DeclareMathOperator{\Sl}{Sl}
\DeclareMathOperator{\Open}{Open}
\DeclareMathOperator{\Irr}{Irr}
\DeclareMathOperator{\Sol}{Sol}
\DeclareMathOperator{\an}{an}
\DeclareMathOperator{\nb}{nb}
\DeclareMathOperator{\LR}{LR}
\DeclareMathOperator{\Isom}{Isom}
\DeclareMathOperator{\alg}{alg}
\DeclareMathOperator{\Set}{Set}
\DeclareMathOperator{\Id}{Id}
\DeclareMathOperator{\Real}{Re}
\DeclareMathOperator{\St}{St}
\DeclareMathOperator{\irr}{irr}
\DeclareMathOperator{\Good}{Good}
\DeclareMathOperator{\colim}{colim}
\DeclareMathOperator{\Lie}{Lie}
\DeclareMathOperator{\iso}{iso}
\DeclareMathOperator{\fppf}{fppf}
\DeclareMathOperator{\rank}{rank}
\author[J.-B.~Teyssier]{Jean-Baptiste Teyssier}
\curraddr{Institut de Mathématiques de Jussieu. 4 place Jussieu. Paris.}
\email{jean-baptiste.teyssier@imj-prg.fr}
\title{Moduli of Stokes torsors and singularities of differential equations}
\begin{document}

\begin{abstract}
Let  $\mathcal{M}$ be a meromorphic connection  with poles along a smooth divisor $D$ in a smooth algebraic variety. Let $\Sol \mathcal{M}$ be the solution complex of  $\mathcal{M}$. We prove that the good formal decomposition locus of $\mathcal{M}$ coincides with the locus where the restrictions to $D$ of $\Sol \mathcal{M}$ and $\Sol \End\mathcal{M}$ are local systems. By contrast to the very different natures of these loci (the first one is defined via algebra, the second one is defined via analysis), the proof of their coincidence is geometric. It relies on the moduli of Stokes torsors. 
%As an application, we  prove a characteristic $0$  variant of Tsuzuki's constancy theorem for Newton polygons of $F$-isocrystals on abelian varieties over a finite field.
\end{abstract}
\maketitle
%\begin{theorem}
%The moduli of Stokes torsors $H^{1}(\mathds{T}, \St_{\mathcal{N}})$ is an affine space over $\mathds{C}$ in the following cases: 
%\begin{enumerate}
%\item[$(i)$] $\dim\mathds{T} = 2$.
%\item [$(ii)$] $\mathcal{N}$ has at most two levels.
%\end{enumerate}
%\end{theorem}

%\begin{conjecture}\label{LDcom}
%The Loday-Richaud sheaf and the sheaf of relative $\St_{\mathcal{N}}$-torsors are canonically isomorphic.
%\end{conjecture}
%$$
% \left\{
 %   \begin{array}{ll}
   %             \frac{\partial X}{\partial x_i }&=\Omega_{x_i} X  \\
     %       &    \\
%           \frac{\partial X}{\partial y} &= \Omega_y X
  %  \end{array}
%\right.
%$$
The main problematic of this paper is to understand how the geometry of the Stokes phenomenon  in any dimension sheds light on the interplay between the singularities of a differential equation and the singularities of its solutions. \\ \indent
Consider an algebraic linear system $\mathcal{M}$ of differential equations with $n$ variables
$$
                \frac{\partial X}{\partial x_i }=\Omega_{i} X   \text{ \hspace{0.5cm} } i=1, \dots , n
$$
 where $\Omega_i $ is a  square matrix of size $r$ 
with coefficients into the ring $\mathds{C}[x_1, \dots , x_n][x^{-1}_n]$ of Laurent polynomials with poles along the hyperplane $D$ in $\mathds{C}^n$ given by $x_n=0$. 
At a point away from $D$, the holomorphic solutions of the system $\mathcal{M}$ are fully understood by means of Cauchy's theorem. At a point of $D$, the situation is much more complicated. It is still the source of  challenging unsolved problems. We call $D$ \textit{the singular locus of  $\mathcal{M}$}. Two distinguished open subsets of $D$ where the singularities of $\mathcal{M}$ are mild can be defined. \\ \indent
First, the set $\Good(\mathcal{M})$ of \textit{good formal decomposition points of $\mathcal{M}$} is the subset of $D$ consisting of points $P$ at the formal neighbourhood of which $\mathcal{M}$ admits a \textit{good decomposition}. For $P$ being the origin, and modulo ramification issues that will be neglected in this introduction, this means roughly that there exists a base change with coefficients in $\mathds{C}\llbracket x_1, \dots , x_n\rrbracket [x^{-1}_n]$ splitting $\mathcal{M}$ as a direct sum of well-understood systems easier to work with. \\ \indent
Good formal decomposition can always be achieved in the one variable case \cite{SVDP}. It is desirable in general because it provides a concrete description of the system, at least formally at a point. In the higher variable case however, it was observed in \cite{Sabbahdim} that $\mathcal{M}$ may not have  good formal decomposition at every point of $D$. Thus, the set $\Good(\mathcal{M})$ is  a non trivial invariant of $\mathcal{M}$. As proved by André \cite{Andre}, the set $\Good(\mathcal{M})$  is the complement in $D$ of a Zariski closed subset $F$ of $D$ either purely of codimension $1$ in $D$ or empty.
%\footnote{for a generalisation to the case where $D$ is a normal crossing divisor, let us mention \cite{Kedlaya3}.}. 
Traditionally, $F$ is called the \textit{Turning locus of $\mathcal{M}$}, by reference to the way the Stokes directions of $\mathcal{M}$ move along a small circle in $D$ going around a turning point. In a sense, \textbf{the good formal decomposition locus of $\mathcal{M}$ is the open subset of $D$ where the singularities of the system $\mathcal{M}$ are as simple as possible}. \\ \indent
To define the second distinguished subset of $D$ associated to $\mathcal{M}$, let us view $\mathcal{M}$ as a $\mathcal{D}$-module, that is a module over the Weyl algebra of differential operators. Let us denote by $\Sol \mathcal{M}$ the solution complex of the analytification of $\mathcal{M}$. Concretely, $\mathcal{H}^0\Sol \mathcal{M}$ encodes the holomorphic solutions of our differential system while the higher cohomologies of $\Sol \mathcal{M}$ keep track of higher Ext groups in the category of  $\mathcal{D}$-modules. As proved by Kashiwara \cite{Ka2}, the complex $\Sol \mathcal{M}$  is perverse. From  a theorem of Mebkhout 
\cite{Mehbgro}, the restriction of  $\Sol \mathcal{M}$  to $D$, that is, the \textit{irregularity complex of $\mathcal{M}$ along $D$}, denoted by $\Irr_{D}^{\ast} \mathcal{M}$ in this paper, is also perverse. In particular, $(\Sol \mathcal{M})_{|D}$ is a local system on $D$ away from a closed analytic subset of $D$. The \textit{smooth locus of $(\Sol \mathcal{M})_{|D}$} denotes the biggest open in $D$ on which $(\Sol \mathcal{M})_{|D}$ is a local system.
 In a sense, \textbf{the smooth locus of $(\Sol \mathcal{M})_{|D}$ is the open subset of $D$ where the singularities of the (derived) solutions of $\mathcal{M}$ are as simple as possible}. \\ \indent
As observed in \cite{phdteyssier}, the open set $\Good(\mathcal{M})$ is included in the smooth locus of  $(\Sol \mathcal{M})_{|D}$ and $(\Sol \End\mathcal{M})_{|D}$. The reverse inclusion was conjectured in \cite[15.0.5]{phdteyssier}. Coincidence of $\Good(\mathcal{M})$ with the smooth locus of $(\Sol \mathcal{M})_{|D}$ and $(\Sol \End\mathcal{M})_{|D}$ seems surprising at first sight, since goodness is an \textit{algebraic} notion whereas $\Sol \mathcal{M}$ is \textit{transcendental}. The main goal of this paper is to prove via \textit{geometric} means the following
\begin{theorem}\label{mainth}
Let $X$ be a smooth complex algebraic variety. Let $D$ be a smooth divisor in $X$. Let  $\mathcal{M}$ be a meromorphic connection on $X$ with poles along $D$. Then, the good formal decomposition locus of $\mathcal{M}$ is  the  locus of $D$ where $(\Sol \mathcal{M})_{|D}$ and $(\Sol \End\mathcal{M})_{|D}$ are local systems.
\end{theorem}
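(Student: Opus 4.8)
The inclusion $\Good(\mathcal{M})\subseteq\{\,(\Sol\mathcal{M})_{|D}\ \text{and}\ (\Sol\End\mathcal{M})_{|D}\ \text{are local systems}\,\}$ being known by \cite{phdteyssier}, only the converse needs an argument, and it suffices to argue locally. Fix $P\in D$, assume $(\Sol\mathcal{M})_{|D}$ and $(\Sol\End\mathcal{M})_{|D}$ are local systems near $P$, and suppose for contradiction that $P\notin\Good(\mathcal{M})$. By \cite{Andre} the turning locus $F$ is then non-empty near $P$ and purely of codimension $1$ in $D$, so $P\in F$. Cut $X$ by a germ of smooth surface $S$ through $P$, transverse to $D$ and generic: since $\Sol$ commutes with the non-characteristic restriction to $S$, since $\End(\mathcal{M}_{|S})=(\End\mathcal{M})_{|S}$, and since the restriction of a local system is a local system, the two local-system hypotheses persist for $\mathcal{M}_{|S}$; by compatibility of the turning locus with generic plane sections, $P$ stays a turning point of $\mathcal{M}_{|S}$. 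We are reduced to $X$ a surface, $D$ a smooth curve, $P$ an isolated turning point.

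Set $D^{*}=D\smallsetminus\{P\}$; there $\mathcal{M}$ is good, carrying a good formal decomposition $\widehat{\mathcal{M}}\simeq\bigoplus_{i}\mathcal{E}^{\varphi_{i}}\otimes\mathcal{R}_{i}$ whose exponential factors $\varphi_{i}$ are a priori defined only up to monodromy around $P$. By Kedlaya--Mochizuki there is a finite composition of point blow-ups $e\colon X'\to X$ over $P$ after which $e^{+}\mathcal{M}$ has good formal structure along the total transform of $D$, with exceptional divisor $E$ lying over $P$. On the real blow-up of $X'$ the Stokes phenomenon of $e^{+}\mathcal{M}$, hence that of $\mathcal{M}$ over $D^{*}$, is encoded by a torsor under a sheaf of unipotent groups $\St$, and the main construction of the paper represents the functor of such torsors by an affine $\mathds{C}$-scheme of finite type, whose dimension is governed by the irregularity of $\End$ of the underlying formal model and whose automorphisms by $\End$ itself.

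Translate the hypotheses. Over $D^{*}$ the local system $(\Sol\mathcal{M})_{|D}$ has rank the irregularity number $\irr_{D}(\mathcal{M})=\sum_{i}\rk(\mathcal{R}_{i})\,\ord_{D}\varphi_{i}$, and $(\Sol\End\mathcal{M})_{|D}$ has rank $\irr_{D}(\End\mathcal{M})=\sum_{i\ne j}\rk(\mathcal{R}_{i})\rk(\mathcal{R}_{j})\,\ord_{D}(\varphi_{i}-\varphi_{j})$; requiring them to be local systems near $P$ forces both integers to be constant near $P$ and, through Mebkhout's perversity of $\Irr_{D}^{\ast}$, annihilates any punctual contribution at $P$. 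Constancy alone, however, is not goodness at $P$: it remains to exclude that the leading coefficients of the $\varphi_{i}$ and of the $\varphi_{i}-\varphi_{j}$ acquire zeros at $P$, equivalently that the formal model on $D^{*}$ has monodromy or fails to extend, equivalently that $E$ carries hidden irregularity for $e^{+}\mathcal{M}$. The plan is to invoke representability: the local-system hypotheses locate, over each loop around $P$, the Stokes torsor inside the finite-type moduli scheme, and the absence of extra deformations (controlled by $\End\mathcal{M}$) and of extra automorphisms (controlled by $\mathcal{M}$) make the family of torsors over $D^{*}$ rigid as $q\to P$; boundedness and the finite-type property of the moduli then force this family, together with its formal model, to extend across $P$, so that $E$ carries no irregularity and $e^{+}\mathcal{M}$ — hence $\mathcal{M}$ — is good at $P$, contradicting $P\in F$.

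The main obstacle is precisely this rigidity-and-extension step: turning the transcendental input ``$(\Sol\mathcal{M})_{|D}$ and $(\Sol\End\mathcal{M})_{|D}$ are local systems near $P$'' into the algebraic output ``the good set of irregular values extends across $P$''. It is here that the scheme structure of the moduli of Stokes torsors — finite type, dimension theory, vanishing of automorphisms — is genuinely used, rather than the mere analytic existence of Stokes data, and it is here that the algebraic locus $\Good(\mathcal{M})$ and the analytic locus get identified. Everything else — the dimensional reduction, the appeals to \cite{Andre} and to Kedlaya--Mochizuki, the perversity of $\Irr_{D}^{\ast}$ — is preparatory.
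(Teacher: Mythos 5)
Your outline reproduces the paper's architecture (reduction to surfaces, resolution of turning points, moduli of Stokes torsors), but at the decisive moment you write that ``boundedness and the finite-type property of the moduli then force this family, together with its formal model, to extend across $P$'' and explicitly defer the ``rigidity-and-extension step''. That step is not a formality one can wave at: finite-typeness of the moduli forces nothing by itself, and the actual mechanism is absent from your proposal. What the paper does is the following. After resolving the turning point by $\pi:Y\to X$ with $E=\pi^{-1}(\Delta)$, one considers the restriction morphism $\Phi: H^{1}(\partial E,\St^{<E}_{\pi^{+}\mathcal{M}})\to H^{1}(\partial Q,\St^{<\Delta}_{\mathcal{M}})$ to a nearby smooth point $Q\in\Delta^{*}$. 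The two inputs are: (i) $\Phi$ is a \emph{closed immersion} (Theorem \ref{cestuneimmersionfermee}, the technical core of the paper, proved by a stratum-by-stratum analysis, the exact sequence of nonabelian cohomology, and a Kostant--Rosenlicht orbit-closedness argument for unipotent groups); and (ii) the hypothesis that $(\Sol\End\mathcal{M})_{|D}$ is a local system makes $\Phi$ \emph{étale}, because tangent spaces of the moduli at a marked connection are $H^{1}(\,\cdot\,,\Irr^{\ast}\End M)$ and obstructions live in $H^{2}$, which vanishes when $\Irr^{\ast}_{D}\pi_{+}\End M$ is a local system in degree $1$. A closed immersion that is étale into an affine space (Babbitt--Varadarajan), which admits no nontrivial finite étale covers, is surjective; surjectivity hits the torsor of the formal model $(\mathcal{N}^{*},\iso^{*})$ at $Q$, and pushing forward by $\pi_{+}$ produces the extension. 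None of (i)--(iii) appears in your text, and (i) in particular is not something the representability theorem gives for free.

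Two further points. First, your sentence ``absence of extra automorphisms (controlled by $\mathcal{M}$)'' misattributes the role of the hypothesis on $\Sol\mathcal{M}$: Stokes torsors have no nontrivial automorphisms unconditionally (Lemma \ref{nontrivialauto}); the hypothesis on $\Sol\mathcal{M}$ itself is used only at the very end (Lemma \ref{trueifquasisplit}) to show that the \emph{extended} connection is actually good, i.e.\ that the leading coefficients of the irregular values $a$ themselves (not merely of the differences $a-b$) do not vanish at $P$. Second, extending the marked connection is not yet the end: one must still prove the extension is semi-stable at $P$ (Lemma \ref{splitextension}, via Deligne--Malgrange lattices and Hartogs) before the order computation applies. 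So the proposal is a correct map of the terrain, but the proof of the theorem is precisely the part you have left out.
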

Other criteria detecting good points of meromorphic connections are available in the literature. Let us mention André's criterion \cite[3.4.1]{Andre} in terms of specialisations of Newton polygons. Let us also mention Kedlaya's criterion \cite[4.4.2]{Kedlaya1} in terms of the
variation of spectral norms under varying Gauss norms on the ring of formal power series. This criterion is numerical in nature. By contrast, the new criterion given by Theorem \ref{mainth} is \textit{transcendental}. Its sheaf theoretic flavour makes it possible to track the existence of turning points in the cohomology of the irregularity complex. For an application of this observation, let us refer to Theorem \ref{boundedness} below.\\ \indent
The main tool at stake in the proof of Theorem \ref{mainth} is geometric, via \textit{moduli of Stokes torsors} \cite{TeySke}. For a detailed explanation of the line of thoughts that brought them into the picture, let us refer to \ref{why}. 
%Hence, Theorem \ref{mainth}
%offers another instance of the fact, already exploited in \cite{TeySke}, that moduli of Stokes torsors relate algebraic and transcendental aspects of differential equations in a non trivial way. \\ \indent
Before stating an application of Theorem \ref{mainth} (see Theorem \ref{boundedness} below), we explain how these moduli are used by giving the main ingredients of the proof of Theorem 1 in dimension $2$. In that case, we have to show the goodness of a point $0\in D$ given that $(\Sol \mathcal{M})_{|D}$ and $(\Sol \End\mathcal{M})_{|D}$ are local systems in a neighbourhood of $0$.
The main problem is to extend the good formal decomposition of $\mathcal{M}$  across $0$. This decomposition can be seen as a system of linear differential equations $\mathcal{N}$ defined in a neighbourhood of a small disc $\Delta^{\ast}$ of $D$ punctured at $0$. \\ \indent
To show that $\mathcal{N}$  extends across $0$, we first construct via Stokes torsors a moduli space  $\mathcal{X}$ parametrizing very roughly systems defined in a neighbourhood of $\Delta$ and formally isomorphic to $\mathcal{M}$ along $\Delta$. A distinguished point of $\mathcal{X}$ is given by $\mathcal{M}$ itself.
Similarly, we construct a moduli space  $\mathcal{Y}$ parametrizing roughly systems defined in a neighbourhood of $\Delta^{\ast}$ and formally isomorphic to $\mathcal{M}_{|\Delta^{\ast}}$ along $\Delta^{\ast}$. Two distinguished points of $\mathcal{Y}$  are $\mathcal{M}_{|\Delta^{\ast}}$ and $\mathcal{N}$. Restriction from $\Delta$ to $\Delta^{\ast}$ provides a morphism of algebraic varieties $\res : \mathcal{X} \longrightarrow \mathcal{Y}$. The problem of extending $\mathcal{N}$ is then the problem of proving that $\res$ hits $\mathcal{N}$. The moduli $\mathcal{X}$ and $\mathcal{Y}$ have the wonderful property that the tangent  map $T_{\mathcal{M}} \res$ of $\res$ at $\mathcal{M}$ is exactly  the map 
$$
\Gamma(\Delta, \mathcal{H}^1\Sol \End\mathcal{M}) \longrightarrow \Gamma(\Delta^{\ast}, \mathcal{H}^1\Sol \End\mathcal{M} )
$$
 associating  to $s\in \Gamma(\Delta, \mathcal{H}^1\Sol \End\mathcal{M})$ the restriction of $s$ to $\Delta^{\ast}$. In this geometric picture, the smoothness of $(\mathcal{H}^1\Sol \End\mathcal{M})_{|D}$ around $0$ thus translates into the fact that 
\textit{$T_{\mathcal{M}} \res$ is an isomorphism of vector spaces}. Since $\mathcal{X}$ and $\mathcal{Y}$ are smooth, we deduce that  $\res$  is \textit{étale} at the point $\mathcal{M}$. Thus, the image of $\res$ in $\mathcal{Y}$ contains a non empty open set. We prove  furthermore (see Theorem \ref{cestuneimmersionfermee} below) that $\res$ is a closed immersion, so its image is closed in $\mathcal{Y}$. Since $\mathcal{Y}$  is irreducible, we conclude that $\res$ is surjective, which proves the existence of the  sought-after extension of $\mathcal{N}$. \\ \indent

Let us now describe an application of Theorem \ref{mainth}. Let $X$ be a smooth variety over a finite field of characteristic $p>0$. Let $\ell\neq p$  be a prime number. As proved by Deligne  \cite{HenKerz}, there is only a finite number of semi-simple $\ell$-adic local systems on $X$ with prescribed rank, bounded ramification at infinity and up to a twist by a character coming from the base field. A natural question is to look for a differential analogue of this finiteness result. Let $X$ be a smooth complex proper algebraic variety. Let 
$\mathcal{M}$ be a  meromorphic connection on $X$. In this situation, H. Esnault and A. Langer asked whether it is possible to control the resolution of turning points of $\mathcal{M}$ by means of $X$, the rank of $\mathcal{M}$ and the irregularity of $\mathcal{M}$. In dimension $2$, this question amounts to bound the number of blow-ups needed to eliminate the turning points of $\mathcal{M}$. To the author's knowledge, this question is still widely open. If such a bound exists in dimension $2$, the number of turning points of $\mathcal{M}$ should in particular be bounded by a quantity depending only on the surface $X$, the rank of $\mathcal{M}$ and the irregularity of $\mathcal{M}$. As an application of Theorem \ref{mainth}, we give such a bound in a relative situation, thus providing the first evidence for a positive answer to H. Esnault and A. Langer's question. This is the following 
\begin{theorem}\label{boundedness}
Let $S$ be a smooth complex algebraic curve. Let $0\in S$. Let $p: \mathcal{C}\longrightarrow S$ be a relative smooth proper curve of genus $g$. Let $\mathcal{M}$ be a meromorphic connection on  $\mathcal{C}$ with poles along the fibre $\mathcal{C}_0$ of $p$ above $0$. Let $r_D(\mathcal{M})$ be the highest generic slope of $\mathcal{M}$ along $\mathcal{C}_0$. Then, the number of turning points of $\mathcal{M}$ along $\mathcal{C}_0$ is bounded by $ 8(\rank \mathcal{M})^2(g+1)r_D(\mathcal{M} )$.
\end{theorem}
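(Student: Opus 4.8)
\emph{Plan.} The strategy is to bootstrap from Theorem \ref{mainth}: read off the turning points from the non‑local‑constancy loci of the irregularity complexes of $\mathcal{M}$ and of $\End\mathcal{M}$, and then count such points on the proper curve $D=\mathcal{C}_{0}$ by an Euler–Poincaré argument, the genus $g$ entering because $D$ is a fibre of $p$.

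First the reduction. Since $p$ is smooth, $D=\mathcal{C}_{0}$ is a smooth proper curve of genus $g$, and by André's theorem recalled in the introduction the turning locus $F$ of $\mathcal{M}$ along $D$ is a proper Zariski‑closed, hence finite, subset of $D$. By Theorem \ref{mainth}, $F=Z(\mathcal{M})\cup Z(\End\mathcal{M})$, where for a meromorphic connection $\mathcal{N}$ on $\mathcal{C}$ with poles along $D$ we write $Z(\mathcal{N})\subset D$ for the finite set of points at which the perverse sheaf $\Irr_{D}^{\ast}\mathcal{N}=(\Sol\mathcal{N})_{|D}$ fails to be a local system. It therefore suffices to establish, for $\mathcal{N}\in\{\mathcal{M},\End\mathcal{M}\}$, a bound of the form $\#Z(\mathcal{N})\le 2(g+1)\,(\rank\mathcal{N})\,r_{D}(\mathcal{N})$: summing the two and using $\rank\End\mathcal{M}=(\rank\mathcal{M})^{2}$ together with $r_{D}(\End\mathcal{M})\le r_{D}(\mathcal{M})$ — the exponential factors of $\End\mathcal{M}$ are the differences $\varphi_{i}-\varphi_{j}$ of those of $\mathcal{M}$ and $\ord_{D}(\varphi_{i}-\varphi_{j})\le\max_{k}\ord_{D}(\varphi_{k})$ — one gets
$$\#F\ \le\ 2(g+1)\bigl((\rank\mathcal{M})+(\rank\mathcal{M})^{2}\bigr)r_{D}(\mathcal{M})\ \le\ 8(\rank\mathcal{M})^{2}(g+1)\,r_{D}(\mathcal{M}).$$

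Now the count for a single $\mathcal{N}$. Let $\nu$ be the generic rank of $\Irr_{D}^{\ast}\mathcal{N}$ along $D$, i.e.\ the generic irregularity of $\mathcal{N}$; away from $F$ the connection is good, so $\nu$ is the sum of the generic slopes of $\mathcal{N}$ along $D$ with multiplicities, whence $\nu\le(\rank\mathcal{N})r_{D}(\mathcal{N})$, and we may assume $\nu\ge 1$. Writing $j\colon U=D\setminus Z(\mathcal{N})\hookrightarrow D$, the perverse sheaf $\Irr_{D}^{\ast}\mathcal{N}$ restricts on $U$ to $\mathcal{L}[1]$ for a local system $\mathcal{L}$ of rank $\nu$, and the triangle $j_{!}j^{\ast}\to\id\to i_{Z(\mathcal{N})\ast}i_{Z(\mathcal{N})}^{\ast}$ gives
$$\chi\bigl(D,\Irr_{D}^{\ast}\mathcal{N}\bigr)\ =\ -\,\nu\,\bigl(2-2g-\#Z(\mathcal{N})\bigr)\ +\ \sum_{x\in Z(\mathcal{N})}\chi\bigl((\Irr_{D}^{\ast}\mathcal{N})_{x}\bigr),$$
so that $\#Z(\mathcal{N})\cdot\nu=\chi(D,\Irr_{D}^{\ast}\mathcal{N})-(2g-2)\nu-\sum_{x}\chi((\Irr_{D}^{\ast}\mathcal{N})_{x})$. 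Two inputs then close the estimate. First, the global term is controlled: by properness of $p$ and proper base change, $R\Gamma(D,\Irr_{D}^{\ast}\mathcal{N})\cong (Rp_{\ast}\Sol\mathcal{N})_{0}$, whose Euler characteristic is a local index of the holonomic complex $p_{+}\mathcal{N}$ at $0$; its generic rank is bounded by the index theorem on the fibres $\mathcal{C}_{t}$, which for $t\neq 0$ miss $D$ and so carry $\mathcal{N}$ without poles, giving $\chi_{\dR}(\mathcal{C}_{t},\mathcal{N}_{|\mathcal{C}_{t}})=(2-2g)\rank\mathcal{N}$ with $h^{0},h^{2}\le\rank\mathcal{N}$, hence generic rank $\le 2(g+1)\rank\mathcal{N}$; and the highest slope of $p_{+}\mathcal{N}$ at $0$ is $\le r_{D}(\mathcal{N})$, since a proper pushforward does not increase slopes. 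The fact that $D$ is a \emph{fibre} — so $D\cdot D=0$ and $K_{\mathcal{C}}\cdot D=2g-2$ by adjunction — is exactly what makes this numerology linear in $g$. Second, each local term $\chi((\Irr_{D}^{\ast}\mathcal{N})_{x})$ contributes in a controlled way: by perversity and Mebkhout's semicontinuity of the irregularity, a genuinely singular point of $\Irr_{D}^{\ast}\mathcal{N}$ forces a definite, bounded jump. Plugging both into the displayed identity and dividing by $\nu\ge 1$ yields $\#Z(\mathcal{N})\le 2(g+1)(\rank\mathcal{N})r_{D}(\mathcal{N})$, the constant $8$ in the final bound leaving room to absorb the lower‑order terms.

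The main obstacle is precisely the pair of boundedness inputs in the previous paragraph: turning the heuristic ``a turning point costs a definite, bounded amount of Euler characteristic of the irregularity complex'' into a proof. Concretely one must pin down $\chi(D,\Irr_{D}^{\ast}\mathcal{N})$ by an index formula (Dubson–Kashiwara, or Deligne–Laumon applied to the family $p$) and, in tandem, bound the local contributions of the turning points, which is a local transcendental analysis of $\Irr_{D}^{\ast}\mathcal{N}$ near a bad point, where the formal model of $\mathcal{N}$ is complicated exactly because the point is a turning point. Everything else — the reduction through Theorem \ref{mainth}, the passage from $\mathcal{M}$ to $\End\mathcal{M}$, and the slope estimates — is formal.
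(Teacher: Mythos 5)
Your architecture coincides with the paper's: use Theorem \ref{mainth} to identify the turning points with the singular points of the irregularity complexes, then count those on the proper fibre by an Euler--Poincar\'e formula, with the global Euler characteristic controlled through the pushforward $p_{+}\mathcal{M}$. But you leave open exactly the step that carries the paper's new content. Your control of the global term rests on the assertion that ``the highest slope of $p_{+}\mathcal{N}$ at $0$ is $\le r_{D}(\mathcal{N})$, since a proper pushforward does not increase slopes.'' In the form you need it --- the slopes at $0$ of the one-variable connections $\mathcal{H}^{i}p_{+}\mathcal{M}$ bounded by the \emph{generic} slope of $\mathcal{M}$ along the very fibre $\mathcal{C}_{0}$ over which you push forward --- this is not an off-the-shelf fact. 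It is the combination of the compatibility of nearby slopes with proper pushforward and Proposition \ref{sharpbound} (nearby slopes of a smooth morphism are bounded by the highest generic slope along the polar fibre), whose proof occupies a full section: one must produce a lattice $F$ with $\mathcal{M}=F(\ast D)$ stable under $f^{r_{D}(\mathcal{M})}x_{i}\partial_{x_{i}}$ via Deligne--Malgrange lattices (Lemma \ref{malgrange}) and then run a Kashiwara--Malgrange $V$-filtration computation. Without this input, $\chi(\mathcal{C}_{0},(\Sol\mathcal{M})_{|\mathcal{C}_0})$ is not bounded in terms of $g$, $\rank\mathcal{M}$ and $r_{D}(\mathcal{M})$, and the count does not close.

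By contrast, the second ``obstacle'' you flag --- bounding the local contributions $\chi((\Irr_{D}^{\ast}\mathcal{N})_{x})$ at the turning points from above --- is not actually needed. Organize the Euler--Poincar\'e identity as the paper does for $K=(\Sol\mathcal{M})_{|\mathcal{C}_0}[1]\oplus(\Sol\End\mathcal{M})_{|\mathcal{C}_0}[1]$:
$$
\chi(\mathcal{C}_{0},K)=(2-2g)r_{K}-\sum_{x\in\Sing K}\bigl[(r_{K}-\dim\mathcal{H}^{0}K_{x})+\dim\mathcal{H}^{1}K_{x}\bigr].
$$
Perversity of $K$ gives $r_{K}-\dim\mathcal{H}^{0}K_{x}\ge 0$ (no sections of $\mathcal{H}^0 K$ with punctual support) and forces each bracket to be $\ge 1$ at a singular point, since the local Euler characteristic of a perverse sheaf on a curve differs from its generic value exactly at its singular points. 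So each turning point need only cost \emph{at least} one unit of Euler characteristic, not a bounded amount from above, and no local analysis of $\Irr_{D}^{\ast}\mathcal{N}$ near the turning point is required. The genuine gap is the slope bound for the pushforward, not the local terms.
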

To prove Theorem \ref{boundedness}, the main tools are Theorem  \ref{mainth} and a new boundedness result for nearby slopes \cite{NearbySlope} suggested by the $\ell$-adic picture  \cite{HuTeyssier}. See remark \ref{lienladic} for details.

A crucial step in the proof of Theorem \ref{mainth} is to understand the geometry of the restriction map for Stokes torsors. This is achieved by Theorem \ref{cestuneimmersionfermee} below. To state it, let $X$ be a smooth complex algebraic variety. Let $D$ be a normal crossing divisor in $X$. Let $\mathcal{M}$ be a good meromorphic connection on $X$ with poles along $D$. Let $p_D : \tilde{X}\longrightarrow X$ be the fibre product of the real blow-ups of $X$ along the components of $D$. For every subset $A\subset D$, put $\partial A := p^{-1}_D(A)$. Let $\St_{\mathcal{M}}^{<D}$ be the Stokes sheaf  of $\mathcal{M}$ (see section \ref{defmoduli} for details). This is a sheaf of complex unipotent algebraic groups on $\partial D$. Then, we have the following
\begin{theorem}\label{cestuneimmersionfermee}
Let $U\subset V\subset D$ be non empty open subsets in $D$ such that $V$ is connected. Then, the natural morphism
$$
\xymatrix{
H^{1}( \partial V, \St_{\mathcal{M}}^{<D})            \ar[r] &  
H^{1}(\partial U, \St_{\mathcal{M}}^{<D})   
}
$$
is a closed immersion of affine schemes of finite type over $\mathds{C}$.
\end{theorem}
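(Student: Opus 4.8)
The plan is to derive the statement from the structure of the moduli of Stokes torsors recalled in \S\ref{defmoduli}, reducing it by a dévissage along the Stokes hierarchy of $\mathcal M$ to an elementary property of sections of local systems. Recall from \cite{TeySke} that for every nonempty open $W\subseteq D$ the pointed set $H^1(\partial W,\St_{\mathcal M}^{<D})$ is representable by an affine scheme of finite type over $\mathds C$, and that the representing scheme is built inductively along the Stokes hierarchy: the sheaf $\St_{\mathcal M}^{<D}$ carries a finite filtration by normal subsheaves whose successive quotients $\mathrm{gr}_k$ are abelian sheaves of $\mathds C$-vector spaces, and this produces a tower
$$
H^1(\partial W,\St_{\mathcal M}^{<D})=\mathcal X^W_0\longrightarrow\mathcal X^W_1\longrightarrow\cdots\longrightarrow\mathcal X^W_N=\Spec\mathds C
$$
in which each $\mathcal X^W_k\to\mathcal X^W_{k+1}$ is a torsor under a vector bundle $E^W_k$ whose fibre over a point is $H^1(\partial W,\mathcal G_k)$ for a sheaf $\mathcal G_k$ obtained from $\mathrm{gr}_k$ by a harmless twist. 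The whole construction is functorial in $\partial W$; in particular the restriction morphism $\res$ is compatible with the towers, inducing morphisms $\mathcal X^V_k\to\mathcal X^U_k$ and morphisms of vector bundles $E^V_k\to\res^\ast E^U_k$ over $\mathcal X^V_{k+1}$.

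I would next record the elementary remark that a morphism $\varphi\colon E\to F$ of vector bundles on a scheme $S$ which is injective on every fibre induces a closed immersion on total spaces: injectivity of $\varphi$ on fibres is equivalent to surjectivity of $\varphi^\vee\colon F^\vee\to E^\vee$ on fibres, hence to surjectivity of $\varphi^\vee$ as a morphism of $\mathcal O_S$-modules by Nakayama, hence to surjectivity of $\Sym(\varphi^\vee)$; thus $\mathbb V(E)\hookrightarrow\mathbb V(F)$. It follows that a morphism of torsors over a fixed base along such a $\varphi$ is a closed immersion. Since closed immersions are stable under base change and composition, a descending induction on $k$ reduces the theorem to the assertion that, for every sheaf $\mathcal G$ arising as a twist of a graded piece of $\St_{\mathcal M}^{<D}$, the restriction map $H^1(\partial V,\mathcal G)\to H^1(\partial U,\mathcal G)$ is injective.

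Such a $\mathcal G$ is of the form $\beta_!\mathcal L$, where $\beta\colon W_0\hookrightarrow\partial D$ is the open inclusion of the rapid-decay locus of a difference of two exponential factors of $\mathcal M$ and $\mathcal L$ is a finite rank local system of $\mathds C$-vector spaces on $W_0$ — twisting changes $\mathcal L$ but neither its rank nor the open set $W_0$, all the groups involved being unipotent. I would then apply the Leray spectral sequence of $p_D\colon\partial D\to D$ to $\beta_!\mathcal L$. As the exponential factors have poles along $D$, the locus $W_0$ meets every fibre of $p_D$ in a \emph{proper} open subset, so $\beta_!\mathcal L$ has no nonzero section along any fibre and $R^0p_{D\ast}(\beta_!\mathcal L)=0$; and since $\mathcal M$ is good, $\mathcal G':=R^1p_{D\ast}(\beta_!\mathcal L)$ is a local system on $D$ — this is part of the computation underlying the inclusion of $\Good(\mathcal M)$ in the smooth locus of $(\Sol \End\mathcal M)_{|D}$ recalled above (see \cite{phdteyssier}). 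The spectral sequence then degenerates in total degree $1$ and yields, functorially in $W$,
$$
H^1(\partial W,\beta_!\mathcal L)=\Gamma(W,\mathcal G').
$$
Since $V$ is connected and $U$ is a nonempty open subset of $V$, a section of the local system $\mathcal G'$ over $V$ vanishing on $U$ vanishes on $V$; hence $\Gamma(V,\mathcal G')\to\Gamma(U,\mathcal G')$ is injective, which is exactly what was needed. (Finite-dimensionality of the spaces $\Gamma(W,\mathcal G')$ and affineness of the maps $\mathcal X^W_k\to\mathcal X^W_{k+1}$ also re-prove that the two $H^1$ in the statement are affine of finite type over $\mathds C$.)

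The step I expect to be the main obstacle is the first one: unwinding from \cite{TeySke} the precise tower presentation of the Stokes moduli together with its functoriality in $\partial W$, and verifying that the sheaves $\mathcal G_k$ remain of the shape $\beta_!\mathcal L$, so that the Leray computation applies uniformly over the base $\mathcal X^V_{k+1}$. Once this bookkeeping is in place, the geometric core of the theorem is the near-tautology that a nonzero section of a local system over a connected base cannot vanish on a nonempty open subset; it is there, together with the identification of $R^1p_{D\ast}(\beta_!\mathcal L)$ with a local system, that the connectedness of $V$ and the goodness of $\mathcal M$ are used.
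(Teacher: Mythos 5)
Your reduction has a genuine gap, and it sits exactly where the paper locates the difficulty: at the crossing points of $D$. The claim that $\mathcal G'=R^1p_{D\ast}(\beta_!\mathcal L)$ is a local system on all of $D$ is false. It is a local system on each stratum $D_I^{\circ}$ (this is essentially Proposition \ref{smooth} together with Mochizuki's extension theorem, and your within-stratum argument is fine in spirit), but it does not extend as a local system across the deeper strata. Concretely, take $D=\{xy=0\}$ near $P=(0,0)$ and a pair of irregular values with $a-b=1/y$. Over a point $Q\in\{y=0\}^{\circ}$ the rapid-decay locus of $e^{a-b}$ \emph{along $D$} meets $\partial Q\simeq S^1$ in a nonempty proper open set and $H^1(\partial Q,\beta_!\mathcal L)\neq 0$; but over $\partial P$ the condition $e^{1/y}\in\mathcal A^{<D}$ forces decay in the $x$-variable as well, which fails (fix $y$ with $\Real(1/y)<0$ and let $x\to 0$), so $W_0\cap\partial P=\emptyset$ and, by proper base change, the stalk of $\mathcal G'$ at $P$ vanishes. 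This is precisely the computation in the proof of Proposition \ref{singpt}: at a crossing point only those pairs $(a,b)$ whose difference has poles along \emph{all} local components of $D$ survive in $\St_{\mathcal M}^{<D}$. As a consequence the restriction map at a crossing point is in general only a closed immersion and not an isomorphism, which is why the paper needs Proposition \ref{verygoodclosedimmersion} as its ``technical core''; your Leray argument would instead predict an isomorphism of the relevant $H^1$'s near such a point, which is too strong.

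A secondary, but real, issue is the first step: the tower $\mathcal X^W_k\to\mathcal X^W_{k+1}$ with fibres that are torsors under vector bundles $E^W_k$, functorially in $W$. In nonabelian cohomology the fibre of $H^1(G)\to H^1(G/N)$ over a class $\mathcal T$ is a quotient of $H^1(N^{\mathcal T})$ by the action of $H^0((G/N)^{\mathcal T})$, and surjectivity requires an $H^2$-obstruction to vanish; moreover the twist $N^{\mathcal T}$ varies with the base point, so obtaining a genuine vector bundle over $\mathcal X^W_{k+1}$ requires constancy of $\dim H^1(\partial W,N^{\mathcal T})$, which is not automatic over opens $W$ meeting several strata. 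The paper sidesteps all of this: it proves the stratumwise statement by identifying $H^1(\partial D_I^{\circ},\cdot)$ with monodromy invariants of $H^1(\partial P,\cdot)$, handles the passage between strata by a group-theoretic argument (exact sequence \eqref{exactsequJFRenkel}, the universal torsor, and the Kostant--Rosenlicht theorem applied to the orbit of the trivial cocycle under the unipotent group $G_{\mathcal V}$), and then globalizes by Zorn's lemma and noetherianity of the coordinate ring. To repair your approach you would need, at minimum, to replace the Leray/local-system step at crossing points by an argument showing that the map on abelian $H^1$'s is injective there despite the jump in the sheaf, and that is essentially Proposition \ref{verygoodclosedimmersion} again.
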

Let us finally give an application of Theorem \ref{cestuneimmersionfermee} to degenerations of irregular singularities. Let $X$ be a smooth algebraic variety and let $D$ be a germ of smooth divisor at $0\in X$. Let $\mathcal{M}$ be a germ of meromorphic connection defined in a neighbourhood of $D$ in $X$ and with poles along $D$.  Motivated by Dubrovin's conjecture and the study of Frobenius manifolds, Cotti, Dubrovin and Guzzetti \cite{CDG} studied how much information on the Stokes data of $\mathcal{M}$  can be retrieved from the restriction of $\mathcal{M}$  to a smooth curve $C$ transverse to $D$ and passing through $0$. \\ \indent
Under the assumption that $\mathcal{M}_{\hat{D}}$ splits as a direct sum of regular connections twisted by meromorphic functions $a_1, \dots , a_n\in \mathcal{O}_X(\ast D)$ with simple poles along $D$, they proved  that \textit{the Stokes data of the restriction $\mathcal{M}_{|C}$ determine in a bijective way the Stokes data of $\mathcal{M}$ in a small  neighbourhood of $0$ in $D$}. This is striking, since the numerators of the $a_i - a_j$ may vanish at $0$, thus inducing a discontinuity at $0$ in the configuration of the Stokes directions. Using different methods, this was reproved by Sabbah in \cite[Th 1.4]{SabbahsurCDG}. In this paper, we give a short conceptual proof of a stronger version of Cotti, Dubrovin and Guzzetti's injectivity theorem: we don't make any assumption on the shape of $\mathcal{M}_{\hat{D}}$, nor do we suppose that $D$ is smooth, nor do we assume that $C$ is transverse to $D$. The price to pay for this generality is the use of resolution of turning points, as proved in the fundamental work of 
Kedlaya \cite{Kedlaya2} and  Mochizuki \cite{Mochizuki1}. The intuition that the techniques developed in this paper could be applied to the questions considered by Cotti, Dubrovin and Guzzetti is due to C. Sabbah. \\ \indent
To state our result, let us recall that a \textit{$\mathcal{M}$-marked connection} is the data of a pair $(M, \iso)$ where $M$ is a germ of meromorphic connection with poles along $D$ defined in a neighbourhood of $D$ in $X$, and where $\iso : M_{\hat{D}}\longrightarrow \mathcal{M}_{\hat{D}}$ is an isomorphism of formal connections. 
\begin{theorem}\label{CDG}
Let $X$ be a germ of smooth algebraic variety around a point $0$. Let $D$ be a germ of divisor passing through $0$. Let $\mathcal{M}$ be a germ of meromorphic connection at $0$ with poles along $D$. Let $C$ be a smooth curve passing through $0$ and not contained in any of the irreducible components of $D$. If  $(M_1, \iso_1)$  and $(M_2, \iso_2)$  are $\mathcal{M}$-marked connections such that   $$(M_1, \iso_1)_{|C}\simeq (M_2, \iso_2)_{|C}$$ then 
$(M_1, \iso_1)$  and $(M_2, \iso_2)$  are isomorphic in a neighbourhood of $0$.
\end{theorem}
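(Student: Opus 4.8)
The plan is to reduce the statement to the injectivity part of the restriction map for Stokes torsors provided by Theorem \ref{cestuneimmersionfermee}, after first making the divisor $D$ good by resolution of turning points. First I would treat the case where $D$ is smooth at $0$ and $\mathcal{M}$ has a good formal decomposition there: in that situation, $\mathcal{M}$-marked connections with poles along $D$ defined near $0$ are classified, up to isomorphism, by the pointed set $H^1(\partial D, \St_{\mathcal{M}}^{<D})$, the marking $\iso$ being exactly what rigidifies the gluing so that no automorphisms intervene (this is the content of section \ref{defmoduli}; the base point corresponds to $\mathcal{M}_{\hat D}$ itself, and the construction is functorial in $D$). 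Restriction to the curve $C$ induces, after passing to the real blow-up, a continuous map $\partial C \to \partial D$; since $C$ is transverse to $D$, $\partial C$ is a circle mapping into the circle bundle $\partial D$ over a neighbourhood of $0$ in $D$, and pullback of Stokes torsors gives
\[
\rho \colon H^1(\partial D, \St_{\mathcal{M}}^{<D}) \longrightarrow H^1(\partial C, \St_{\mathcal{M}_{|C}}^{<0}).
\]
The hypothesis $(M_1,\iso_1)_{|C}\simeq (M_2,\iso_2)_{|C}$ says precisely that the classes $[M_1], [M_2]$ have the same image under $\rho$, so the theorem amounts to injectivity of $\rho$. Now $\partial C$ is a single circle contained as a "vertical" slice in $\partial D$, and choosing a small open $U \subset D$ shrinking onto the image of the projection of $C$ — in fact one can take $U$ to be a small disc and $C$ to meet every real direction — the map $\rho$ factors through the restriction $H^1(\partial D, \St_{\mathcal{M}}^{<D}) \to H^1(\partial U, \St_{\mathcal{M}}^{<D})$, which is a closed immersion, hence injective, by Theorem \ref{cestuneimmersionfermee}; and the remaining map $H^1(\partial U,\St^{<D}) \to H^1(\partial C, \St^{<0})$ is injective because over the disc $U$ the Stokes sheaf is, up to homotopy, pulled back from a single circle transverse slice, so the restriction to $\partial C$ loses no cohomological information (one compares the nerves: $\partial C \hookrightarrow \partial U$ is a deformation retract onto which the Stokes stratification of $\partial U$ restricts faithfully once $U$ is chosen so that all Stokes hypersurfaces of $\mathcal{M}$ over $U$ are transverse to $C$). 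This settles the good case.

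For the general case I would invoke resolution of turning points (Kedlaya \cite{Kedlaya2}, Mochizuki \cite{Mochizuki1}): there is a projective modification $\pi \colon X' \to X$, isomorphic over a punctured neighbourhood of $0$, such that $D' = \pi^{-1}(D)_{\mathrm{red}}$ is a normal crossing divisor and $\pi^+\mathcal{M}$ is good along $D'$. Marked connections pull back: a $\mathcal{M}$-marked connection $(M,\iso)$ gives a $\pi^+\mathcal{M}$-marked connection $(\pi^+ M, \pi^+\iso)$ on $X'$, and conversely, since $\mathcal{M}$ and the $M_i$ are honest meromorphic connections on $X$ (not merely on $X'$), a marked isomorphism $\pi^+M_1 \simeq \pi^+M_2$ compatible with the markings descends to an isomorphism $M_1 \simeq M_2$ near $0$: the isomorphism is defined away from the exceptional locus, which has codimension $\geq 1$, and extends across it by Hartogs / the fact that $\mathcal{H}om(M_1,M_2)$ is again a meromorphic connection, hence its sections over $X\setminus\{0\}$ extend. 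The strict transform $C'$ of $C$ is still a smooth curve meeting $D'$; it may now be tangent to components of $D'$ or pass through crossing points, but this is allowed — the good case above must therefore be proved for $D$ an arbitrary normal crossing divisor and $C$ merely not contained in any component, which is why the statement of Theorem \ref{cestuneimmersionfermee} is set up for normal crossing $D$ and arbitrary connected opens $U \subset V \subset D$. So the argument of the previous paragraph runs verbatim on $X'$ with $D'$ in place of $D$, provided one chooses $V$ to be a connected open neighbourhood in $D'$ of the (finite) set $C' \cap D'$ and $U$ a smaller connected open still meeting $\partial C'$ in a deformation retract.

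The main obstacle is the second injectivity input in the first paragraph — that restriction from a small polydisc neighbourhood in $D$ (or $D'$) to the circle $\partial C$ is injective on Stokes $H^1$. Theorem \ref{cestuneimmersionfermee} handles restriction between opens of $D$ of the \emph{same} dimension; here I additionally need to descend to a transverse curve. The way I would handle this is to interpose a one-dimensional slice: choose coordinates so that $D$ near $0$ is a product and $C$ projects isomorphically to a line $L$ through $0$ in $D$; the Stokes sheaf $\St_{\mathcal{M}}^{<D}$ is defined by the Stokes hypersurfaces of the exponential factors of $\mathcal{M}$, and after the good reduction these are, locally on $D$, pulled back from $L$ up to a fibration, so $H^1(\partial U, \St^{<D}) \to H^1(\partial L, \St^{<D}_{|L}) = H^1(\partial C, \St^{<0}_{\mathcal{M}_{|C}})$ is an isomorphism for $U$ small — this is a Künneth/homotopy-invariance statement for non-abelian $H^1$ with coefficients in a sheaf of unipotent groups on a circle bundle, where $H^1$ only depends on the monodromy around the circle direction. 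Establishing this cleanly, in the presence of genuine turning behaviour over $U$ (the Stokes directions \emph{do} move, as emphasised in the discussion of \cite{CDG}), is the delicate point; but the key observation that rescues it is that the marking $\iso$ fixes the \emph{formal} model, hence the exponential factors and their orders, uniformly over $U$, so the combinatorics of the Stokes filtration is locally constant on $U$ away from $0$, and the apparent discontinuity at $0$ is absorbed because $\partial C$ already surrounds $0$. Once this local comparison is in hand, chaining it with the closed immersion of Theorem \ref{cestuneimmersionfermee} and the descent along $\pi$ completes the proof.
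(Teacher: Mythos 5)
Your overall strategy is the paper's: resolve turning points, translate marked connections into Stokes torsors via Lemma \ref{bijectionettangent}, and conclude from the injectivity of a restriction map supplied by Theorem \ref{cestuneimmersionfermee}. But there is a genuine gap at the step you yourself flag as ``the delicate point'': the injectivity of $H^{1}(\partial U, \St_{\mathcal{M}}^{<D}) \to H^{1}(\partial C, \St_{\mathcal{M}_{|C}}^{<0})$, i.e.\ the passage from an open subset of the divisor down to the transverse circle of the curve. Theorem \ref{cestuneimmersionfermee} only compares opens of $D$ with one another; it says nothing about restriction to a curve. Your proposed fix --- a K\"unneth/homotopy-invariance statement for non-abelian $H^1$ with coefficients in the Stokes sheaf over a ``product'' neighbourhood --- is not established anywhere and is precisely the nontrivial content: the Stokes hypersurfaces over $U$ are genuinely not pulled back from a transverse slice near a turning point, and your remark that ``$\partial C$ already surrounds $0$'' does not substitute for an argument. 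Worse, after resolution you allow $C'$ to be tangent to components of $D'$ or to pass through crossing points and assert the argument ``runs verbatim''; at a crossing point the fibre $\partial P$ is a torus while $\partial_{C'}P$ is a circle mapping into it, and the identification of Stokes data that you need simply fails to be available in that generality.

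The paper's proof dissolves this difficulty by one extra geometric move you are missing: after resolving the turning points, \emph{blow up further} so that the strict transform $C'$ meets $E=\pi^{-1}(D)$ transversally at a point $P$ lying in the smooth locus of $E$. Then $\partial P$ is literally a circle canonically identified with $\partial_{C'}P$, the restriction of $\St_{\pi^{+}\mathcal{M}}^{<E}$ to $\partial P$ is the Stokes sheaf of $(\pi^{+}\mathcal{M})_{|C'}$, and the hypothesis $(M_1,\iso_1)_{|C}\simeq(M_2,\iso_2)_{|C}$ directly says that the two torsors have the same image under $H^{1}(\partial E,\St_{\pi^{+}\mathcal{M}}^{<E})\to H^{1}(\partial P,\St_{\pi^{+}\mathcal{M}}^{<E})$, which is a closed immersion (hence a monomorphism) by Theorem \ref{cestuneimmersionfermee} with $V=E$ and $U=\{P\}$. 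No intermediate open $U\subset D$ and no curve-versus-divisor comparison is needed. Your descent step at the end ($\pi^{+}M_1\simeq\pi^{+}M_2$ implies $M_1\simeq M_2$ near $0$) is fine and matches the paper's use of $\pi_{+}\pi^{+}M_i\simeq M_i$. If you insert the extra blow-ups to achieve transversality at a smooth point and replace your ``delicate point'' by the resulting tautological identification of $\partial P$ with the curve circle, your argument becomes the paper's proof.
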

Let us give  an outline of the paper. In section \ref{LRJMU}, we introduce the global variant of the moduli of Stokes torsors constructed in \cite{TeySke} suited for the proof of Theorem \ref{mainth}. We then prove Theorem 4. In section $2$, we interpret the tangent spaces and the obstruction theory for these moduli in a transcendental way via the solution complexes for connections. We then prove Theorem 5.  In section 3, we show how to reduce the proof of Theorem  \ref{mainth} to extending the good formal model of $\mathcal{M}$ across the point $0$ under study. In section 4, we show that the sought-after extension exists provided that the moduli of Stokes torsors associated to a resolution of the turning point $0$ for $\mathcal{M}$ satisfies suitable geometric conditions. Finally, we show that these geometric conditions are always satisfied when the hypothesis of Theorem \ref{mainth} are satisfied, thus concluding the proof of Theorem \ref{mainth}. The last section is devoted to the proof of Theorem \ref{boundedness}.

%\subsection*{Acknowledgement}
%I thank Y. André, P. Boalch, N. Budur, C. Sabbah and C. Simpson for constructive remarks on a first draft of this work. This work has been funded by the long term structural funding Methusalem grant of the Flemish Government. I thank N. Budur and W. Veys for their support as well as KU Leuven for providing outstanding working conditions.

\subsection*{Acknowledgement}
I thank Y. André, P. Boalch, M, Brion, H. Esnault, F. Loray, C. Sabbah, T. Saito, C. Simpson and T. Mochizuki for interesting discussions and constructive remarks on a first draft of this work. I thank C. Sabbah for sharing with me the intuition that the techniques developed in this paper could be applied to the questions considered in \cite{CDG}. I thank H. Hu for stimulating exchanges on  nearby slopes. I thank N. Budur  and W. Veys for constant support during the preparation of this paper. This work has been funded by the long term structural funding-Methusalem grant of the Flemish Government. I thank KU Leuven for providing outstanding working conditions. This paper benefited from a one month stay at the Hausdorff Research Institute for Mathematics, Bonn. I thank the Hausdorff Institute for providing outstanding working conditions.

\section{Moduli of Stokes torsors. Global aspects}\label{LRJMU}
\subsection{Why moduli of Stokes torsors?}\label{why}
Let us explain in this subsection how the moduli of Stokes torsors were found to be relevant to the proof of Theorem 1. We use the notations from the introduction and work in dimension 2. We suppose that $0\in D$ lies in the smooth locus of $(\Sol \mathcal{M})_{|D}$ and $(\Sol \End\mathcal{M})_{|D}$, and we want to prove that $0$ is a good formal decomposition point for $\mathcal{M}$.\\ \indent
From a theorem of Kedlaya \cite{Kedlaya1}\cite{Kedlaya2}  and  Mochizuki \cite{Mochizuki2}\cite{Mochizuki1}, our connection $\mathcal{M}$ acquires good formal decomposition at any point after pulling-back by a suitable sequence of blow-ups above $D$. To test the validity of the conjecture \cite[15.0.5]{phdteyssier}, a natural case to consider was the  case where only one blow-up is needed. Using results of André \cite{Andre}, it was shown in \cite{LetterToSabbah} that the conjecture reduces in this case to the following 
\begin{question}
Given two good meromorphic connections $\mathcal{M}$ and  $\mathcal{N}$ with poles along the coordinate axis in $\mathds{C}^{2}$ and formally isomorphic at $0$, is it true that 
\begin{equation}\label{egalitedim}
 \dim (\mathcal{H}^1\Sol \End \mathcal{M})_0= \dim (\mathcal{H}^1\Sol \End \mathcal{N})_0  \hspace{0.5cm} ?
\end{equation}
\end{question}
It turns out that each side of \eqref{egalitedim} appeared as dimensions of moduli spaces of Stokes torsors constructed by Babbitt-Varadarajan in \cite{BV}.
These moduli were associated with germs of meromorphic connections in dimension $1$. Babbitt and Varadarajan proved that they are affine spaces. This suggested the existence of a moduli $\mathcal{X}$ with two points $P, Q\in \mathcal{X}$ such that the left-hand side of \eqref{egalitedim} would be $\dim T_P\mathcal{X}$ and the right-hand side of \eqref{egalitedim} would be $\dim T_Q\mathcal{X}$. The equality \eqref{egalitedim} would then follow from the smoothness and connectedness of the putative moduli. This is what led to \cite{TeySke}, but the question of smoothness and connectedness was left open. In the meantime, a positive answer to the above question was given by purely analytic means by C. Sabbah in \cite{SabRemar}.

\subsection{Relation with \cite{TeySke}}
In \cite{TeySke}, a moduli for local Stokes torsors  was constructed in any dimension. This moduli suffers two drawbacks in view of the proof of Theorem \ref{mainth}. First, the Stokes sheaf used in  \cite{TeySke} only makes sense at a neigbourhood of a point, whereas our situation will be global as soon as we apply Kedlaya-Mochizuki's resolution of turning points. Second, the relation between Irregularity and the tangent spaces of the moduli from \cite{TeySke} only holds in particular cases. To convert the hypothesis on Irregularity appearing in  Theorem \ref{mainth} into a geometric statement pertaining to moduli of torsors, we need to replace the Stokes sheaf $\St_{\mathcal{M}}$ of a connection $\mathcal{M}$ by a subsheaf denoted by $\St_{\mathcal{M}}^{<D}$. We will abuse terminology be also calling the torsors under $\St_{\mathcal{M}}^{<D}$ Stokes torsors. The sheaf $\St_{\mathcal{M}}^{<D}$ has the advantage of being globally defined when $\mathcal{M}$ is globally defined. Along the smooth locus of $D$, the sheaf  $\St_{\mathcal{M}}^{<D}$ is the usual Stokes sheaf. The only difference between 
$\St_{\mathcal{M}}$ and $\St_{\mathcal{M}}^{<D}$ appears at a singular point of $D$.

\subsection{Geometric setup}\label{setup}
In this subsection, we introduce basic notations. Let $X$ be a smooth complex algebraic  variety of dimension $n$. Let $D$ be a normal crossing divisor in $X$. For a quasi-coherent sheaf $\mathcal{F}$ on $X$, we denote by  $\mathcal{F}_{|D}$ the sheaf of germs of sections of $\mathcal{F}$ along $D$.  Let $D_1, \dots , D_m$ be the irreducible components of $D$. 
%For every sheaf of $\mathcal{O}_X$-module $\mathcal{F}$, we set $$\mathcal{F}_{\hat{D}}=\mathcal{O}_{\hat{X|D}}\otimes_{\mathcal{O}_{X}}\mathcal{F}$$
For $I\subset \llbracket 1, m\rrbracket $, set 
$$
D_I := \bigcap_{i\in I}D_i \text{ and }  D_I^{\circ} := D_I \setminus \displaystyle{\bigcup_{i\notin I } }  D_i
$$
Let $\rho$ be a metric on $X$. For $I\subset \llbracket 1, m\rrbracket $ and for $\epsilon>0$ small enough, put 
$$\Delta_I:=\{x\in D | \exists y \in D_I \text{ with }  \rho(x,y)<\epsilon \}$$
and
$$
\Delta_I^{\circ} := \Delta_I \setminus \displaystyle{\bigcup_{i\notin I } }  D_i
$$
The $\Delta_I^{\circ}, I\subset \llbracket 1, m\rrbracket$ form an open cover of $D$. Since  $\epsilon$ will not play any explicit role in the sequel, we will slightly abuse terminology by replacing the expression "at the cost of shrinking $\epsilon$" by "at the cost of shrinking the $\Delta_I$".

\subsection{Functions with asymptotic expansion along $D$}
For $i=1, \dots, m$, let $\tilde{X}_i\longrightarrow X$ be the real blow-up of $X$ along $D_i$.
Let $p_D : \tilde{X}\longrightarrow X$ be the fibre product of the $\tilde{X}_i$, $i=1, \dots, m$ above $X$. For every subset $A\subset D$, put $\partial A := p^{-1}_D(A)$. Let  $\iota_A : \partial A\longrightarrow \partial D$ be the canonical inclusion. \\ \indent
 Let $\mathcal{A}$ be the sheaf of  functions on $\partial D$ admitting an asymptotic expansion along $D$ \cite{Sabbahdim}. For a closed subset $Z$ in $D$, let $\mathcal{A}_{\hat{Z}}$ be the completion of $\mathcal{A}$ along the pull-back by $p_D$ of the ideal sheaf of $Z$. Put  $\mathcal{A}^{<Z}:=\Ker(\mathcal{A}\longrightarrow \mathcal{A}_{\hat{Z}})$. When $Z=D$, the sheaf $\mathcal{A}^{<D}$ can be concretely described locally as follows (see \cite[II 1.1.11]{Sabbahdim} for a proof). Let
$(x_1, \dots ,  x_n)$ be local coordinates centred at $0\in D$ such that $D$ is defined around $0$ by $x_1 \cdots x_l = 0$ for some $l\in \llbracket 1, m\rrbracket$. Then, 
the germ of $\mathcal{A}^{<D}$ at $\theta  \in \partial 0$ is given by those holomorphic functions $u$ defined over the trace 
on $X\setminus D$ of a neighbourhood $\Omega$  of $\theta$ in $\tilde{X}$, and such that for every compact $K\subset \Omega$, for every $N:=(N_1, \dots , N_l )\in \mathds{N}^l$, there exists a constant $C_{K,N }>0$ satisfying
$$
|u(x)|\leq C_{K,N} |x_1|^{N_1}  \cdots  |x_l|^{N_l}   \text{ for every $x\in K\cap (X\setminus D)$ }
$$

%We denote by $\mathcal{A}^{<D}$ the subsheaf of $\mathcal{A}$ of functions with rapid decay along $D$. 
 \subsection{Torsors}
Let $M$ be a manifold. Let $\mathcal{G}$ be a sheaf of groups on $M$. We recall that a torsor under $\mathcal{G}$ is a sheaf $\mathcal{F}$ on $M$ endowed with a left action of $\mathcal{G}$ such that there exists a cover $\mathcal{U}$ by open subsets of $M$ such that for every $U\in \mathcal{U}$, there exists an isomorphism of sheaves $\mathcal{F}_{|U}\simeq \mathcal{G}_{|U}$ commuting with the action of $\mathcal{G}$, where $\mathcal{G}$ acts on itself by multiplication on the left. It is a standard fact that isomorphism classes of $\mathcal{G}$-torsors are in bijection with $H^{1}(M,\mathcal{G})$, the set of non abelian cohomology classes of $\mathcal{G}$. 

\subsection{Stokes torsors and the functor of relative Stokes torsors}\label{defmoduli}
Let $\mathcal{M}$ be a good meromorphic connection defined in a neighbourhood of $D$ and with poles along $D$.
We set  
$$
\partial\mathcal{M}= \mathcal{A}\otimes_{p^{-1}_D\mathcal{O}_{X|D}}p^{-1}_D\mathcal{M}
$$ 
and
$$
\partial\mathcal{M}_{\hat{D}}= \mathcal{A}_{\hat{D}}\otimes_{p^{-1}_D\mathcal{O}_{X|D}}p^{-1}_D\mathcal{M}
$$ 
Let $\mathcal{D}_X$ be the sheaf of differential operators on $X$. The sheaf $\mathcal{A}$ is endowed with an action of $p^{-1}_D\mathcal{D}_{X|D}$. Hence, so does $\partial\mathcal{M}$. We can thus form the De Rham complex of $\mathcal{M}$ with coefficients in $\mathcal{A}$ as
$$
\xymatrix{
\partial\mathcal{M} \ar[r] & p^{-1}_D\Omega^1_{X|D}\otimes_{p^{-1}_D\mathcal{O}_{X|D}}\partial\mathcal{M} \ar[r] &\cdots  \ar[r] &  p^{-1}_D\Omega^n_{X|D}\otimes_{p^{-1}_D\mathcal{O}_{X|D}}\partial\mathcal{M} 
}
$$
It is denoted by $\DR \partial\mathcal{M}$. Similarly, we denote by $\DR^{<D}\mathcal{M}$ the De Rham complex of $\mathcal{M}$ with coefficients in $\mathcal{A}^{<D}$.\\ \indent
Let $Z$ be a closed subset of $D$. Let $\St_{\mathcal{M}}^{<Z}$ be the subsheaf of  $\mathcal{H}^{0}\DR \partial\End\mathcal{M}$
of sections asymptotic to the Identity along $Z$, that is of the form $\Id +f$ where $f$ has coefficients in $\mathcal{A}^{<Z}$. The sheaf $\St_{\mathcal{M}}^{<Z}$ is a sheaf of complex unipotent algebraic groups on $\partial Z$. This is \textit{the Stokes sheaf of $\mathcal{M}$ along $Z$}.\\ \indent 
Since $\St_{\mathcal{M}}^{<Z}$ is a sheaf of complex algebraic groups, for every $R\in \mathds{C}$-alg, the sheaf of $R$-points of  $\St_{\mathcal{M}}^{<Z}$ is a well-defined sheaf of groups on $\partial Z$. It is denoted by $\St_{\mathcal{M}}^{<Z}(R)$. This is the Stokes sheaf of $\mathcal{M}$ along $Z$ relative to $R$. Torsors under $\St_{\mathcal{M}}^{<Z}(R)$ are the \textit{Stokes torsors along $Z$ relative to $R$}.
For every subset $A\subset Z$, let $H^{1}( \partial A, \St_{\mathcal{M}}^{<Z})$ be the functor 
\begin{eqnarray*}
\text{$\mathds{C}$-alg} &    \longrightarrow &  \Set\\
   R  &            \longrightarrow  &   H^{1}(\partial A, \St_{\mathcal{M}}^{<Z}(R)) 
\end{eqnarray*}
From \cite[Th. 1]{TeySke}, the functor $H^{1}( \partial P, \St_{\mathcal{M}}^{<P})$ is an affine scheme of finite type over $\mathds{C}$ for every $P\in D$. The main goal of this section to prove that  $H^{1}( \partial D, \St_{\mathcal{M}}^{<D})$ is also an affine scheme of finite type over $\mathds{C}$. Note that in the rank $2$ case, this scheme is known to be an affine space  \cite{Rare}. We start with the following
\begin{lemme}\label{nontrivialauto}
Torsors under $\St_{\mathcal{M}}^{<D}$ have no non trivial automorphisms.
\end{lemme}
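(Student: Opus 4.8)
The plan is to reduce the statement to a purely local assertion about the Stokes sheaf $\St_{\mathcal{M}}^{<D}$ and then exploit its unipotence. Recall that $\St_{\mathcal{M}}^{<D}$ is a sheaf of \emph{unipotent} algebraic groups on $\partial D$, a closed subsheaf of $\mathcal{H}^0\DR\partial\End\mathcal{M}$. If $\mathcal{F}$ is a torsor under $\St_{\mathcal{M}}^{<D}$, an automorphism of $\mathcal{F}$ is a section of the sheaf $\underline{\Aut}(\mathcal{F})$; since $\mathcal{F}$ is locally isomorphic to $\St_{\mathcal{M}}^{<D}$ acting on itself by left translation, $\underline{\Aut}(\mathcal{F})$ is locally isomorphic to the sheaf $\St_{\mathcal{M}}^{<D}$ acting on itself by \emph{conjugation} (equivalently, to the inner form of $\St_{\mathcal{M}}^{<D}$ twisted by $\mathcal{F}$). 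So it suffices to show that this twisted sheaf of groups has no nontrivial global sections on $\partial D$. Because the twist is again a sheaf of unipotent algebraic groups locally isomorphic to $\St_{\mathcal{M}}^{<D}$, it is enough to prove: \emph{a unipotent algebraic group sheaf of the type occurring here has no nonzero global sections on $\partial D$}.

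First I would set up the standard filtration argument. The sheaf $\St_{\mathcal{M}}^{<D}$ carries its lower central series, whose graded pieces are sheaves of abelian unipotent groups, hence sheaves of $\mathds{C}$-vector spaces; concretely, after passing to $\log$, each graded piece is a subsheaf of a direct sum of copies of $\mathcal{A}^{<D}$ (the nilpotent endomorphisms with coefficients asymptotic to $0$ along $D$), cut out by the relevant connection and eigenvalue-decay conditions. The automorphism sheaf of $\mathcal{F}$ inherits a compatible filtration with the same graded pieces (conjugation acts trivially on the graded of a unipotent group). By the usual dévissage, it is therefore enough to show that $\Gamma(\partial D, \mathcal{G})=0$ for each abelian graded piece $\mathcal{G}$. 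Each such $\mathcal{G}$ is a sheaf of sections of $\mathcal{A}^{<D}$-type with prescribed exponential decay in the relevant Stokes directions; along $\partial D$, at any point $\theta\in\partial D$ the stalk $\mathcal{G}_\theta$ consists of germs that decay faster than any power of the $|x_i|$, and any \emph{global} such section, being flat to all orders along $D$ and bounded, must vanish — this is exactly the rigidity built into the definition of $\mathcal{A}^{<D}$ recalled above. Hence $\Gamma(\partial D,\mathcal{G})=0$.

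Putting the pieces together: any automorphism $\varphi$ of $\mathcal{F}$ lies in $\Gamma(\partial D,\underline{\Aut}(\mathcal{F}))$, its image in the top graded piece vanishes by the previous paragraph, so $\varphi$ lifts to the next stratum of the filtration, and by induction along the (finite) lower central series $\varphi$ is the identity. Therefore $\mathcal{F}$ has no nontrivial automorphisms.

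The step I expect to be the main obstacle is the vanishing $\Gamma(\partial D,\mathcal{G})=0$ for the graded pieces at the singular locus of $D$: away from $\Sing D$ this is the classical statement that a Stokes cochain asymptotic to the identity with no jumps is trivial, but on the neighbourhoods $\Delta_I^{\circ}$ meeting several components one must check that the decay conditions defining $\St_{\mathcal{M}}^{<D}$ (as opposed to the full $\St_{\mathcal{M}}$) are exactly what forces a global section to be $0$ — this is precisely where replacing $\St_{\mathcal{M}}$ by $\St_{\mathcal{M}}^{<D}$ is used, and one should invoke the local description of $\mathcal{A}^{<D}$ via the estimates $|u(x)|\leq C_{K,N}|x_1|^{N_1}\cdots|x_l|^{N_l}$ together with goodness of $\mathcal{M}$ to rule out any nonzero flat global section.
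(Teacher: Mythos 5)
Your route is genuinely different from the paper's. The paper's proof is a two-step reduction: it first observes that it suffices to prove the statement for torsors over a single fibre $\partial P$, $P\in D$, and then pushes the torsor forward along the injection $\iota:\St_{\mathcal{M}}^{<D}\hookrightarrow \St_{\mathcal{M}}^{<P}$ induced by $\mathcal{A}^{<D}\subset\mathcal{A}^{<P}$, so that the claim follows from the corresponding statement for $\St_{\mathcal{M}}^{<P}$-torsors already proved in \cite[1.8.1]{TeySke}; the only new content is that an automorphism of $\mathcal{T}$ is the identity as soon as the induced automorphism of $\iota_{\ast}\mathcal{T}$ is. Your proposal instead reproves the local statement from scratch by identifying automorphisms of $\mathcal{F}$ with global sections of the inner twist of $\St_{\mathcal{M}}^{<D}$ and running a dévissage along the lower central series. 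That strategy is sound and, if completed, is more self-contained; but as written it has two gaps.

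First, the mechanism you give for the vanishing $\Gamma(\partial D,\mathcal{G})=0$ --- ``the stalk at $\theta$ consists of germs that decay faster than any power of the $|x_i|$, and any global such section \dots must vanish'' --- conflates stalkwise decay with global rigidity. Stalkwise rapid decay forces nothing: at a smooth point $P$ of $D$ the function $e^{-1/x}$ is a nonzero section of $\mathcal{A}^{<D}$ over the open half of the circle $\partial P$ where it decays. The correct statement is that a section over a \emph{full} fibre $\partial P$ glues, by compactness of $\partial P$, to a holomorphic function on an honest punctured neighbourhood of $P$ which is infinitely flat along $D$, hence vanishes; equivalently $p_{D\ast}\mathcal{A}^{<D}=0$, and one concludes by restricting a global section to every fibre. (For the same reason your worry about $\Sing D$ is misplaced: the estimate defining $\mathcal{A}^{<D}$ is uniform in all the $x_i$, so crossing points cause no additional difficulty --- that uniformity is exactly why $\St_{\mathcal{M}}^{<D}$ is used instead of $\St_{\mathcal{M}}$.) Second, and more seriously, the graded pieces $C^{i}/C^{i+1}$ of the lower central series are \emph{quotient} sheaves: a global section of such a quotient is only locally represented by matrices with entries in $\mathcal{A}^{<D}$, so the full-fibre rigidity does not apply to it directly, and this is precisely the point where the twist by $\mathcal{F}$ was supposed to have been absorbed. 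You must either run the induction with local representatives (the local sections of the twisted sheaf agree modulo $C^{i+1}$ because conjugation is trivial on the graded, hence glue over each fibre to an actual flat matrix, hence vanish modulo $C^{i+1}$), or use a filtration whose graded pieces are honest subsheaves of $\End$-valued matrices --- in which case triviality of the conjugation action on the graded is no longer automatic and must be checked. With these two repairs the argument goes through.
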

\begin{proof}
It is enough to show that  $\St_{\mathcal{M}}^{<D}$-torsors above a point $P\in D$ have no non trivial automorphisms. To do this, we can suppose that $\mathcal{M}$ is unramified. Let $\mathcal{T}$ be a $\St_{\mathcal{M}}^{<D}$-torsor on $\partial P$. Let $\phi : \mathcal{T}\longrightarrow \mathcal{T}$ be an automorphism of $\St_{\mathcal{M}}^{<D}$-torsors. Since $\mathcal{A}^{<D}$ is a subsheaf of $\mathcal{A}^{<P}$, there is an injection $\iota : \St_{\mathcal{M}}^{<D}\longrightarrow \St_{\mathcal{M}}^{<P}$. To show that $\phi$ is the identity of $\mathcal{T}$ amounts to show that the push-forward $\iota_{\ast}\phi : \iota_{\ast}\mathcal{T}\longrightarrow \iota_{\ast}\mathcal{T}$ is the identity of the $\St_{\mathcal{M}}^{<P}$-torsor $\iota_{\ast}\mathcal{T}$. This last assertion is a consequence of  \cite[1.8.1]{TeySke}. This finishes the proof of lemma \ref{nontrivialauto}.
\end{proof}
As a straighforward consequence of lemma \ref{nontrivialauto}, we deduce the following
\begin{corollaire}\label{faisceau}
The presheaf of functors $R^1p_{D\ast}\St_{\mathcal{M}}^{<D}$ defined as
\begin{eqnarray*}
\Open(D) &    \longrightarrow &  \Set\\
   U  &            \longrightarrow  &   H^{1}(\partial U, \St_{\mathcal{M}}^{<D}) 
\end{eqnarray*}
is a sheaf of functors. That is, for every cover $\mathcal{U}$ of $D$ by open subsets, the first arrow in the  following diagram of pointed functors
$$
\xymatrix{
H^{1}( \partial D, \St_{\mathcal{M}}^{<D})            \ar[r] &  \prod_{U\in \mathcal{U}} H^{1}( \partial U, \St_{\mathcal{M}}^{<D}) \ar@<-.5ex>[r] \ar@<.5ex>[r] & \prod_{U,V\in \mathcal{U}} H^{1}( \partial U\cap \partial V, \St_{\mathcal{M}}^{<D}) 
}
$$
is an equalizer.
\end{corollaire}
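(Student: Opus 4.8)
The plan is to deduce the sheaf property entirely from Lemma \ref{nontrivialauto}, via the classical mechanism by which rigidity turns the descent of the \emph{stack} of torsors into the sheaf property of the \emph{set}-valued functor $H^{1}$. Since a diagram of functors $\mathds{C}\text{-alg}\to\Set$ is an equalizer precisely when it becomes one after evaluation at each $R\in\mathds{C}\text{-alg}$, I would fix $R$ and work with the genuine sheaf of groups $\mathcal{G}:=\St_{\mathcal{M}}^{<D}(R)$ on the space $Y:=\partial D$. The first, routine, point is that for $U\in\Open(D)$ one has $\partial U=p_D^{-1}(U)$, so $\{\partial U\}_{U\in\mathcal{U}}$ is an open cover of $Y$ as soon as $\mathcal{U}$ covers $D$, and $\partial U\cap\partial V=p_D^{-1}(U\cap V)=\partial(U\cap V)$; thus the assertion becomes: $V\mapsto H^{1}(V,\mathcal{G})$ is a sheaf on $Y$. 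I would also record that Lemma \ref{nontrivialauto} applies to $\mathcal{G}$-torsors and not merely to $\St_{\mathcal{M}}^{<D}$-torsors, since its proof — pushforward along $\iota:\St_{\mathcal{M}}^{<D}\hookrightarrow\St_{\mathcal{M}}^{<P}$, then \cite[1.8.1]{TeySke} — is insensitive to the coefficient ring.

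Next I would prove injectivity of $H^{1}(Y,\mathcal{G})\to\prod_{U}H^{1}(\partial U,\mathcal{G})$. Given $\mathcal{G}$-torsors $\mathcal{T},\mathcal{T}'$ on $Y$ that become isomorphic over each $\partial U$, I would consider the sheaf $\MCHom_{\mathcal{G}}(\mathcal{T},\mathcal{T}')$ of $\mathcal{G}$-equivariant morphisms. Locally $\mathcal{T}\simeq\mathcal{G}\simeq\mathcal{T}'$, so every section of this sheaf is an isomorphism and two sections differ by a section of $\underline{\Aut}_{\mathcal{G}}(\mathcal{T})$; hence $\MCHom_{\mathcal{G}}(\mathcal{T},\mathcal{T}')$ is a pseudo-torsor under $\underline{\Aut}_{\mathcal{G}}(\mathcal{T})$, and it is a genuine torsor because it is locally non-empty. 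By Lemma \ref{nontrivialauto} the group sheaf $\underline{\Aut}_{\mathcal{G}}(\mathcal{T})$ is the one-point sheaf, so this torsor is canonically the one-point sheaf; in particular it has a (unique) global section, i.e.\ $\mathcal{T}\simeq\mathcal{T}'$ over $Y$. This simultaneously yields the uniqueness half of the gluing statement.

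For the gluing itself I would start from a family $(c_U)_{U}\in\prod_{U}H^{1}(\partial U,\mathcal{G})$ with $c_U$ and $c_V$ agreeing over $\partial(U\cap V)$, represented by torsors $\mathcal{T}_U$ on $\partial U$. Agreement of classes over $\partial(U\cap V)$ says that $\MCHom_{\mathcal{G}}(\mathcal{T}_V,\mathcal{T}_U)$, restricted there, is a non-empty — hence, by the same rigidity, canonically trivial — torsor under the relevant automorphism sheaf; this produces a \emph{unique} isomorphism $\phi_{UV}:\mathcal{T}_V|_{\partial(U\cap V)}\to\mathcal{T}_U|_{\partial(U\cap V)}$. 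Uniqueness is exactly the leverage I need: it forces $\phi_{UU}=\id$ and the cocycle relation $\phi_{UV}\circ\phi_{VW}=\phi_{UW}$ over $\partial(U\cap V\cap W)$, both sides being isomorphisms $\mathcal{T}_W\to\mathcal{T}_U$ there, of which there is only one. Thus $(\mathcal{T}_U,\phi_{UV})$ is an honest descent datum for sheaves on $Y$ relative to $\{\partial U\}$; gluing produces a sheaf $\mathcal{T}$ on $Y$ with $\mathcal{T}|_{\partial U}\simeq\mathcal{T}_U$ compatibly, and $\mathcal{T}$ inherits a locally trivial $\mathcal{G}$-action, hence is a $\mathcal{G}$-torsor whose class restricts to $c_U$ for every $U$. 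Combined with the previous paragraph, this shows the displayed diagram is an equalizer.

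I do not anticipate a genuine obstacle: the whole content is the observation, already isolated as Lemma \ref{nontrivialauto}, that the pertinent automorphism sheaves are trivial — this is precisely what upgrades the descent of the stack of Stokes torsors to the sheaf property of $H^{1}$ by making every gluing isomorphism unique and therefore automatically cocycle-compatible. The only matters requiring care are the elementary bookkeeping ($\partial U\cap\partial V=\partial(U\cap V)$, and compatibility of the glued isomorphisms with the $\mathcal{G}$-action and with the local trivializations) and the remark that Lemma \ref{nontrivialauto} holds relative to an arbitrary test ring $R$.
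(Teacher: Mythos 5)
Your proof is correct and is precisely the standard rigidity-implies-descent argument that the paper invokes when it calls the corollary a "straightforward consequence" of Lemma \ref{nontrivialauto}: triviality of the automorphism sheaves makes the gluing isomorphisms unique, hence cocycle-compatible, and also gives injectivity via the Hom-torsor. The paper leaves this unwritten, and your write-up (including the checks that $\partial U\cap\partial V=\partial(U\cap V)$ and that the rigidity lemma holds over an arbitrary test ring $R$) fills in exactly the intended details.
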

%In particular, if $C{\mathcal{U}}$ denotes the category whose objects are the open subsets $U\cap V$, $U,V\in \mathcal{U}$  and whose arrows are given by canonical inclusions, then \ref{faisceau}
%amounts  to say that 
%$$
%H^{1}( \partial D, \St_{\mathcal{M}}^{<D})   \simeq \lim_{\Omega\in C(\mathcal{U})}   H^{1}( \partial \Omega, \St_{\mathcal{M}}^{<D}) 
%$$

The first goal of this section is to prove the following representability theorem
\begin{theorem}\label{rep}
The functor $H^{1}( \partial D, \St_{\mathcal{M}}^{<D})$ is representable by an affine scheme of finite type over $\mathds{C}$. 
\end{theorem}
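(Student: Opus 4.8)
The plan is to deduce Theorem \ref{rep} from the local representability result \cite[Th.~1]{TeySke} (which gives that $H^1(\partial P, \St_{\mathcal{M}}^{<P})$ is an affine scheme of finite type over $\mathds{C}$ for each $P\in D$) together with the sheaf property of $R^1p_{D\ast}\St_{\mathcal{M}}^{<D}$ established in Corollary \ref{faisceau}. The strategy is a gluing argument along the open cover $(\Delta_I^{\circ})_{I\subset \llbracket 1,m\rrbracket}$ of $D$ introduced in \ref{setup}, organized by induction on the number of strata, descending from the deepest ones. The point is that $\Delta_I^{\circ}$ retracts onto $D_I^{\circ}$, so that $H^1(\partial \Delta_I^{\circ},\St_{\mathcal{M}}^{<D})$ can be controlled: along $D_I^{\circ}$ the divisor looks like a product of $|I|$ coordinate hyperplanes crossed with a smooth base, the connection $\mathcal{M}$ is good there, and one reduces, via the homotopy invariance of $H^1$ of the Stokes sheaf along the smooth directions and a Mayer--Vietoris/partition argument, to the \emph{one-point} statement of \cite{TeySke} applied fibrewise. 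So the first step is to show that each $H^1(\partial \Delta_I^{\circ},\St_{\mathcal{M}}^{<D})$ is representable by an affine scheme of finite type.

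Once the pieces are representable, I would glue them. By Corollary \ref{faisceau}, for the cover $\mathcal{U}=(\Delta_I^{\circ})_I$ the sequence
$$
\xymatrix{
H^{1}(\partial D,\St_{\mathcal{M}}^{<D}) \ar[r] & \prod_I H^{1}(\partial\Delta_I^{\circ},\St_{\mathcal{M}}^{<D}) \ar@<-.5ex>[r]\ar@<.5ex>[r] & \prod_{I,J} H^{1}(\partial\Delta_I^{\circ}\cap\partial\Delta_J^{\circ},\St_{\mathcal{M}}^{<D})
}
$$
is an equalizer of functors. The intersections $\Delta_I^{\circ}\cap\Delta_J^{\circ}$ are again (finite unions of) sets of the type handled in the first step, hence their $H^1$ are representable and separated over $\mathds{C}$; the two restriction maps are morphisms of schemes. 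Therefore $H^1(\partial D,\St_{\mathcal{M}}^{<D})$ is the equalizer in the category of functors of a diagram of affine schemes of finite type, and the equalizer of two morphisms $f,g:A\rightrightarrows B$ of affine schemes with $B$ separated is the closed subscheme of $A$ where $f=g$, hence again affine of finite type. Iterating over the finitely many terms of the cover, $H^1(\partial D,\St_{\mathcal{M}}^{<D})$ is representable by an affine scheme of finite type over $\mathds{C}$. Lemma \ref{nontrivialauto} is what makes this clean: because the torsors have no nontrivial automorphisms, the cohomology-set functor is genuinely a sheaf (not merely a stack), so no stacky corrections enter the descent and the naive equalizer computes the answer.

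The main obstacle I expect is the first step: proving representability of $H^1(\partial\Delta_I^{\circ},\St_{\mathcal{M}}^{<D})$, i.e.\ passing from the one-point result of \cite{TeySke} to a result over a whole stratum. The difficulty is genuinely geometric rather than formal: one must show that the Stokes sheaf $\St_{\mathcal{M}}^{<D}$ along $\partial\Delta_I^{\circ}$ is, up to controlled modification near the boundary of the stratum, pulled back from $\partial 0$ for a point $0\in D_I^{\circ}$ — this is exactly where the good decomposition of $\mathcal{M}$ and the precise definition of $\St^{<D}$ (rather than $\St_{\mathcal{M}}$) are used, the difference between the two sheaves being concentrated at the singular points of $D$ as noted in \ref{defmoduli}. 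Technically I would (i) trivialize the situation along the smooth directions of $D_I^{\circ}$ using that the Stokes structure is locally constant there, reducing to a fixed transverse germ; (ii) invoke \cite[Th.~1]{TeySke} for that germ to get an affine scheme $\mathcal{X}_0$ of finite type; (iii) show that sections over $\Delta_I^{\circ}$ amount to sections of a local system of affine schemes with fibre $\mathcal{X}_0$ over a space homotopy equivalent to $D_I^{\circ}$, and that such a functor of global sections is representable by a finite-type affine scheme (an iterated fibre product of copies of $\mathcal{X}_0$ over the smooth finite-type scheme cut out by the monodromy, using again that a fibre product of affine finite-type schemes over a separated base is affine of finite type). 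Everything after this reduction is the formal descent bookkeeping sketched above; the substance is in making (i)–(iii) precise, and in handling the combinatorics of how the strata $\Delta_I^{\circ}$ and their intersections fit together so that the induction on strata closes.
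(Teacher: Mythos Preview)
Your overall architecture matches the paper's: cover $D$ by the $\Delta_I^{\circ}$, use Corollary~\ref{faisceau} to write $H^{1}(\partial D,\St_{\mathcal{M}}^{<D})$ as a finite limit, and on each stratum show that $R^{1}p_{D\ast}\St_{\mathcal{M}}^{<D}$ restricts to a local system of functors on $D_I^{\circ}$, so that $H^{1}(\partial\Delta_I^{\circ},\St_{\mathcal{M}}^{<D})\simeq H^{1}(\partial D_I^{\circ},\St_{\mathcal{M}}^{<D})$ is the $\pi_1(D_I^{\circ},P)$-fixed locus inside $H^{1}(\partial P,\St_{\mathcal{M}}^{<D})$, hence a closed subscheme of the latter once the latter is known to be affine of finite type. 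This is exactly Proposition~\ref{smooth} and the surrounding discussion.

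The gap is your step~(ii). At a point $P\in D_I^{\circ}$ with $|I|\ge 2$ you need $H^{1}(\partial P,\St_{\mathcal{M}}^{<D})$, and \cite[Th.~1]{TeySke} does \emph{not} give this: it treats $H^{1}(\partial P,\St_{\mathcal{M}}^{<P})$, and you have just reminded yourself that $\St_{\mathcal{M}}^{<D}$ and $\St_{\mathcal{M}}^{<P}$ differ precisely at such crossing points. Reducing to a ``transverse germ'' does not help, since the transverse slice through $P$ still carries all $|I|$ branches of $D$ and the rapid-decay condition $<D$ along all of them; nothing in your sketch bridges $<D$ to $<P$. The paper supplies the missing step as Proposition~\ref{singpt}: for a suitably chosen $i\in I$ (one along which every difference of irregular values has a pole), extension--restriction gives an \emph{isomorphism}
\[
H^{1}(\partial P,\St_{\mathcal{M}}^{<D})\ \xrightarrow{\ \sim\ }\ H^{1}(\partial\Delta^{\ast},\St_{\mathcal{M}}^{<D}),\qquad \Delta^{\ast}\subset D_{I\setminus\{i\}}^{\circ},
\]
landing over a stratum of depth $|I|-1$. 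Combined with the local-system argument this gives an induction on $|I|$, with base case $|I|=1$ handled by Babbitt--Varadarajan. The proof of Proposition~\ref{singpt} is the real content here --- it amounts to showing that the adjunction $\iota_P^{-1}\St_{\mathcal{M}}^{<D}\to\iota_P^{-1}j_{\ast}j^{-1}\St_{\mathcal{M}}^{<D}$ is an isomorphism, which is an explicit comparison of rapid-decay conditions on sectors; without it your induction has no step beyond the smooth locus of $D$.
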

In particular, Theorem \ref{rep} says that the sheaf of functors $R^1p_{D\ast}\St_{\mathcal{M}}^{<D}$ on $D$ is a sheaf of affine schemes of finite type over $\mathds{C}$. In order to prove Theorem \ref{mainth}, we will need to understand the geometry of the transition maps of the sheaf $R^1p_{D\ast}\St_{\mathcal{M}}^{<D}$. The second goal of this section is thus to prove Theorem \ref{cestuneimmersionfermee}.

\subsection{Representability by a scheme}\label{repbyscheme}
To prove Theorem \ref{rep}, the idea is to analyse separately the contributions coming from each stratum of $D$. 
%This last step could be carried out using the main result from \cite{TeySke}, but finer results \ref{cestuneimmersionfermee}
%relating Stokes torsors on different strata will be vital for the proof of Theorem \ref{mainth} and Theorem \ref{CDG}.\\ \indent
At the cost of shrinking the $\Delta_I$, Mochizuki's local extension lemma \cite[3.17]{MochStokes} implies that $\iota_{D_I^{\circ}}^{-1}\St_{\mathcal{M}}^{<D}$-torsors extend canonically to 
$\partial\Delta_{I}^{\circ}$ for every $I\subset \llbracket 1, m\rrbracket $. Hence, there is a canonical isomorphism of functors
$$
\xymatrix{
H^{1}( \partial D_{I}^{\circ}, \St_{\mathcal{M}}^{<D})            \ar[r]^-{\sim} &  
H^{1}(\partial \Delta_{I}^{\circ}, \St_{\mathcal{M}}^{<D})   
}
$$
Applying corollary \ref{faisceau} to the open cover of $D$ formed by the $\Delta_{I}^{\circ}$, $I \subset  \llbracket 1, m\rrbracket$, we deduce that $H^{1}(\partial D, \St_{\mathcal{M}}^{<D})$ is a finite limit of the functors $H^{1}( \partial D_{I}^{\circ}, \St_{\mathcal{M}}^{<D})$, $I \subset  \llbracket 1, m\rrbracket$. Hence, Theorem  \ref{rep} is a consequence of the following

\begin{proposition}\label{smooth} 
Let $I \subset  \llbracket 1, m\rrbracket$ be non empty. Suppose that $D_I^{\circ}$ is connected and pick $P\in D_I^{\circ}$. Then, the functors $H^{1}( \partial D_I^{\circ}, \St_{\mathcal{M}}^{<D})$ and $H^{1}( \partial P, \St_{\mathcal{M}}^{<D})$ are affine schemes of finite type over $\mathds{C}$. The natural morphism
$$
\xymatrix{
H^{1}( \partial D_I^{\circ} , \St_{\mathcal{M}}^{<D})          \ar[r] & H^{1}( \partial P, \St_{\mathcal{M}}^{<D})     
}
$$
is a closed immersion.
\end{proposition}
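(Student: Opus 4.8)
The plan is to deduce Proposition \ref{smooth} from the local case — Theorem \ref{rep} in dimension encoded by \cite[Th. 1]{TeySke} for the point $P$ — by a descent argument along the circle/torus directions transverse to $D_I^\circ$ inside $D$, together with a vanishing of cohomology in positive degree that forces the comparison morphism to be a closed immersion rather than merely a monomorphism. More precisely, fix the connected stratum $D_I^\circ$ and the base point $P\in D_I^\circ$. I would first argue that $\St_{\mathcal{M}}^{<D}$ restricted to $\partial D_I^\circ$ is, up to the local extension supplied by Mochizuki's lemma \cite[3.17]{MochStokes} already invoked in \ref{repbyscheme}, a sheaf which is "locally constant along $D_I^\circ$" in the following sense: its formal model (the grading $\Gr\partial\mathcal{M}$, the Stokes directions, the partial order) is locally constant on the good locus, so $\St_{\mathcal{M}}^{<D}|_{\partial D_I^\circ}$ is obtained by transport of structure from $\St_{\mathcal{M}}^{<D}|_{\partial P}$ along the topological fibration $\partial D_I^\circ \to D_I^\circ$ whose fibre over $P$ is $\partial P$. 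One then gets a Leray–Cartan type spectral sequence for $H^1(\partial D_I^\circ,-)$ relative to the fibration $\partial D_I^\circ\to D_I^\circ$, whose fibrewise term is $H^1(\partial P,\St_{\mathcal{M}}^{<D})$ viewed as a (representable, by \cite[Th.1]{TeySke}) sheaf of affine schemes over $D_I^\circ$ with a flat connection (monodromy) coming from $\pi_1(D_I^\circ,P)$.

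Granting this, the key computation is that the restriction morphism
$$
H^1(\partial D_I^\circ,\St_{\mathcal{M}}^{<D})\longrightarrow H^1(\partial P,\St_{\mathcal{M}}^{<D})
$$
identifies the source with the $\pi_1(D_I^\circ,P)$-fixed points (equivalently, horizontal sections) of the target. I would prove this in two halves. For injectivity on $R$-points (for every $\mathds{C}$-algebra $R$): a global $\St_{\mathcal{M}}^{<D}(R)$-torsor on $\partial D_I^\circ$ with trivial restriction to $\partial P$ is trivial, because $\partial D_I^\circ$ retracts onto $\partial P$ fibrewise and the higher ($\geq 1$) direct images of $\St_{\mathcal{M}}^{<D}$ along the fibration vanish except in the fibre degree — here I use that the Stokes sheaf is a sheaf of unipotent groups, so $H^1$ of the circle-directions of $D_I^\circ$ with coefficients in its "vertical" part vanishes by the usual dévissage along the lower central series to abelian (in fact $\mathcal{A}^{<D}$-module) coefficients, for which the relevant $H^{\geq 1}$ on $D_I^\circ\simeq$ (product of a polydisc with punctured discs) is controlled by \cite[Th 1]{TeySke}-type acyclicity. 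For the scheme structure: the functor $H^1(\partial D_I^\circ,\St_{\mathcal{M}}^{<D})$ is then the equalizer of $H^1(\partial P,\St_{\mathcal{M}}^{<D})\rightrightarrows H^1(\partial P,\St_{\mathcal{M}}^{<D})$ (identity versus monodromy action), which is a closed subfunctor of an affine scheme of finite type, hence representable by a closed subscheme; this simultaneously gives representability of $H^1(\partial D_I^\circ,\St_{\mathcal{M}}^{<D})$ and shows the comparison map is a closed immersion. Representability of $H^1(\partial P,\St_{\mathcal{M}}^{<D})$ is \cite[Th.1]{TeySke} applied at $P$ (after checking $\St_{\mathcal{M}}^{<D}$ and $\St_{\mathcal{M}}^{<P}$ agree on $\partial P$, which holds since $P\in D_I^\circ$ means locally $D$ is smooth and $Z=D$ passes through $P$, so $\mathcal{A}^{<D}$ and $\mathcal{A}^{<P}$ coincide on $\partial P$ up to the harmless stratum bookkeeping noted in \ref{repbyscheme}).

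The main obstacle I expect is the precise formulation and proof that $\St_{\mathcal{M}}^{<D}|_{\partial D_I^\circ}$ is genuinely "constant along $D_I^\circ$" with monodromy, and the accompanying acyclicity statement: one must show that the higher direct images $R^q (p_D)_* \St_{\mathcal{M}}^{<D}$ along the fibration over $D_I^\circ$ vanish for $q\geq 1$, so that $H^1(\partial D_I^\circ,\St_{\mathcal{M}}^{<D})$ reduces to monodromy-invariants of $H^1(\partial P,-)$ with no correction from $H^1(D_I^\circ,\underline{\Stab})$ or $H^2$ obstruction terms. This is where the unipotence of the Stokes sheaf is essential — it lets one filter by the lower central series and reduce to coefficients in sheaves of $\mathcal{A}^{<D}$-modules (subquotients of $\End\mathcal{M}$-type data twisted by exponentials), whose cohomology along the real-blow-up directions over the good stratum is computed exactly as in \cite{TeySke} and \cite[II]{Sabbahdim}. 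Once that vanishing is in hand, the closed-immersion claim is formal: a monomorphism of affine schemes of finite type over $\mathds{C}$ that is cut out as an equalizer of two maps to a separated scheme is a closed immersion.
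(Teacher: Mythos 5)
Your overall architecture for the first half of the statement is the same as the paper's: one shows that $R^1p_{I\ast}^{\circ}\St_{\mathcal{M}}^{<D}$ is a local system of functors on $D_I^{\circ}$ with stalk $H^{1}(\partial P, \St_{\mathcal{M}}^{<D})$, so that $H^{1}(\partial D_I^{\circ}, \St_{\mathcal{M}}^{<D})$ is the functor of $\pi_1(D_I^{\circ},P)$-invariants, i.e.\ the equalizer of the identity and the monodromy operators; since the diagonal of an affine (hence separated) scheme is a closed immersion, the comparison map is a closed immersion by base change. The paper gets the local constancy directly from Mochizuki's extension theorem \cite[3.9]{MochStokes} (restriction over nested connected opens in a ball, and down to a point, is an isomorphism of functors), whereas you propose to re-derive it via a Leray-type argument and a d\'evissage of the unipotent Stokes sheaf along its lower central series; that route would need the acyclicity you assert to be actually proved, and it is essentially equivalent in content to the theorem you would be replacing. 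This is heavier but not the real problem.

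The genuine gap is in your treatment of the representability of $H^{1}(\partial P, \St_{\mathcal{M}}^{<D})$, which is the input everything else rests on. You claim that $\St_{\mathcal{M}}^{<D}$ and $\St_{\mathcal{M}}^{<P}$ ``agree on $\partial P$'' because ``$P\in D_I^{\circ}$ means locally $D$ is smooth.'' This is false for $|I|\geq 2$: a point of $D_I^{\circ}$ with $|I|\geq 2$ lies on the intersection of the components $D_i$, $i\in I$, i.e.\ precisely in the singular locus of the normal crossing divisor $D$. Moreover, even where it makes sense to compare them, $\mathcal{A}^{<D}$ and $\mathcal{A}^{<P}$ do not coincide: there is only an inclusion $\mathcal{A}^{<D}\subset \mathcal{A}^{<P}$, hence an injection $\St_{\mathcal{M}}^{<D}\to \St_{\mathcal{M}}^{<P}$ (this is exactly how lemma \ref{nontrivialauto} is proved), and \cite[Th.~1]{TeySke} only gives representability of $H^{1}(\partial P, \St_{\mathcal{M}}^{<P})$, not of $H^{1}(\partial P, \St_{\mathcal{M}}^{<D})$. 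Establishing the latter at a deep stratum point is the technical core of the proposition: the paper proceeds by recursion on $|I|$, the base case $|I|=1$ being Babbitt--Varadarajan, and the inductive step being proposition \ref{singpt}, which shows that extension--restriction $H^{1}(\partial P, \St_{\mathcal{M}}^{<D})\to H^{1}(\partial\Delta^{\ast},\St_{\mathcal{M}}^{<D})$ to a neighbouring lower-codimension stratum is an isomorphism of functors; the proof of that isomorphism requires Mochizuki's splitting lemma and a concrete estimate showing that rapid decay along $\Delta^{\ast}$ already forces rapid decay along all of $D$ near $P$. None of this is ``harmless stratum bookkeeping,'' and without it your argument only proves the proposition for $|I|=1$.
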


%\subsubsection{On the smooth locus}\label{smooth}
\begin{proof} 
Let  $p_I^{\circ}$ be the restriction of $p_D$ to $\partial D_{I}^{\circ}$. Let $B$ be a ball in $D_{I}^{\circ}$. Let $U, V\subset B$ be connected open subsets such that $U\subset V$. Let $Q\in U$. From Mochizuki's extension theorem \cite[3.9]{MochStokes}\footnote{See also \cite[II 6.1]{Frob} for the case where $I$ is a singleton.}, the restriction morphisms in
$$
\xymatrix{
H^{1}( \partial V , \St_{\mathcal{M}}^{<D})          \ar[r] \ar[rd] & H^{1}( \partial U, \St_{\mathcal{M}}^{<D})    \ar[d]   \\
 &     H^{1}( \partial Q, \St_{\mathcal{M}}^{<D}) 
}
$$
are isomorphisms of functors. Hence,  $R^1p_{I\ast}^{\circ}\St_{\mathcal{M}}^{<D}$ is a local system of functors on $D_{I}^{\circ}$ in the sense of \cite{SimpsonII}, and the stalk of $R^1p_{I\ast}^{\circ}\St_{\mathcal{M}}^{<D}$ at $P$ is  $H^{1}( \partial P, \St_{\mathcal{M}}^{<D})$. Thus, $H^{1}(\partial  D_I^{\circ} , \St_{\mathcal{M}}^{<D})$ is the functor of invariants for the action of $\pi_1(D_I^{\circ} , P)$ on $H^{1}(\partial P, \St_{\mathcal{M}}^{<D})$. That is, if $(\gamma_1, \dots, \gamma_N)$ denotes a set of generators for $\pi_1(D_I^{\circ} , P)$, the following diagram of functors
$$
\xymatrix{
H^{1}( \partial D_I^{\circ}  , \St_{\mathcal{M}}^{<D})          \ar[rr] \ar[d]&& H^{1}( \partial P, \St_{\mathcal{M}}^{<D})    \ar[d]^-{(\Id,\gamma_1, \dots, \gamma_N)}   \\
H^{1}( \partial P, \St_{\mathcal{M}}^{<D})  \ar[rr]_-{\text{Diagonal}}  &&     H^{1}( \partial P, \St_{\mathcal{M}}^{<D})^{N+1}
}
$$
is cartesian. To prove Theorem \ref{rep}, we are thus left to prove that $H^{1}(\partial P, \St_{\mathcal{M}}^{<D})$ is an affine scheme of finite type over $\mathds{C}$. We argue recursively on the cardinality of $I$. If $I$ is a singleton, this is a consequence of Babbit-Varadarajan's representability theorem \cite{BV}. In general, this is a consequence of the recursion hypothesis combined with the proposition \ref{singpt} below.

\end{proof}

%\subsubsection{At a singular point}\label{singpt}
\begin{proposition}\label{singpt} 
Let $I \subset  \llbracket 1, m\rrbracket$ with at least two elements. Let $P\in D_I^{\circ}$. Let $i\in I$ such that the difference of any two distinct irregular values for $\mathcal{M}$ at $P$  has poles along $D_i$\footnote{Such a component exists by goodness property of the irregular values of $\mathcal{M}$.}. Then, for a small enough neighbourhood $\Delta$ of $P$ in $D_{I\setminus \{i\}}$, extension-restriction of torsors 
\begin{equation}\label{uniso}
\xymatrix{
H^{1}( \partial P, \St_{\mathcal{M}}^{<D})          \ar[r]&  
H^{1}( \partial \Delta^{\ast}, \St_{\mathcal{M}}^{<D})  
}
\end{equation}
is an isomorphism of functors, where $\Delta^{\ast}=\Delta\setminus D_i\subset D_{I\setminus \{i\}}^{\circ}$.
\end{proposition}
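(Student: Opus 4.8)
The plan is to reduce \eqref{uniso} to a single restriction map being an isomorphism of functors, and then to pin down the geometric mechanism making that map an isomorphism.

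First I would reduce to the case where $\mathcal{M}$ is unramified at $P$, exactly as in the proof of Lemma \ref{nontrivialauto}: after pulling back by a cyclic cover of $X$ ramified along the relevant components, the connection becomes unramified, the hypothesis that every difference of distinct irregular values has a pole along $D_i$ is preserved (the preimage of $D_i$ still separates all irregular values), and both sides of \eqref{uniso} are recovered by equivariant descent. Next, choose local coordinates $(x_1,\dots,x_n)$ at $P$ with $D=\{x_1\cdots x_l=0\}$, $I=\{1,\dots,l\}$ and $i=l$, and let $\Delta$ be a small polydisc neighbourhood of $P$ in $D_{I\setminus\{i\}}$, so that $\Delta\subset\Delta_I^{\circ}$ and $\partial P\subset\partial\Delta$. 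By Mochizuki's local extension lemma \cite[3.17]{MochStokes} (used as in the proof of Proposition \ref{smooth}) the restriction $H^{1}(\partial\Delta,\St_{\mathcal{M}}^{<D})\to H^{1}(\partial P,\St_{\mathcal{M}}^{<D})$ is an isomorphism of functors; and, by its very definition, extension--restriction in \eqref{uniso} is the composite of the inverse of this map with the restriction $H^{1}(\partial\Delta,\St_{\mathcal{M}}^{<D})\to H^{1}(\partial\Delta^{\ast},\St_{\mathcal{M}}^{<D})$. So everything reduces to showing that this last restriction is an isomorphism of functors.

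The key point is that the hypothesis on $i$ controls the combinatorics of $\St_{\mathcal{M}}^{<D}$ near $P$. Using the good Hukuhara--Turrittin decomposition $\partial\mathcal{M}_{\hat D}\simeq\bigoplus_{\alpha}\mathcal{E}^{a_\alpha}\otimes\hat{\mathcal{R}}_\alpha$, with $a_\alpha-a_\beta=u_{\alpha\beta}\,\mathbf{x}^{-\mathbf{m}_{\alpha\beta}}$ a unit times a monomial, a pair $(\alpha,\beta)$ contributes a graded piece of $\St_{\mathcal{M}}^{<D}$ at a point of $\partial D$ only if $\mathbf{m}_{\alpha\beta}$ is positive along \emph{every} local branch of $D$ active at that point, since $\mathcal{A}^{<D}$ forces rapid decay along each such branch while the flat sections of the regular parts $\hat{\mathcal{R}}$ have only moderate growth. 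At a point of $\partial\Delta^{\ast}$ the active branches are $D_1,\dots,D_{l-1}$, while at a point of $\partial(\Delta\cap D_i)$ they are $D_1,\dots,D_{l}$; the hypothesis $m_{\alpha\beta,i}\ge 1$ for all distinct $\alpha,\beta$ says precisely that these two conditions select the same set of pairs. Moreover, since $m_{\alpha\beta,i}\ge 1$, each relevant Stokes wall $\{\Real(a_\alpha-a_\beta)=0\}$ is a union of graphs in the angular coordinate $\theta_i$ over the remaining coordinates, hence transverse to the real-blow-up direction $r=|x_i|$ of $D_i$; by goodness of the units $u_{\alpha\beta}$ these graphs vary smoothly, and without change of combinatorial type, as $r$ runs over $[0,\epsilon)$. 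Finally, inside each sector where $e^{a_\alpha-a_\beta}$ is rapidly decaying, the corresponding graded piece of $\St_{\mathcal{M}}^{<D}$ is governed by the local system of flat sections of $\Hom(\hat{\mathcal{R}}_\beta,\hat{\mathcal{R}}_\alpha)$, hence is locally constant. Putting these together, the datum of $\partial\Delta$, its stratification by the relevant walls, and $\St_{\mathcal{M}}^{<D}$ forms, near $\partial P$, a trivial family over $r\in[0,\epsilon)$ (Thom's first isotopy lemma). Restricting to the slice $r=\epsilon/2$, which lies in $\partial\Delta^{\ast}$ and onto which $\partial\Delta^{\ast}$ also deformation retracts compatibly, identifies $H^{1}(\partial\Delta,\St_{\mathcal{M}}^{<D})$ with $H^{1}(\partial\Delta^{\ast},\St_{\mathcal{M}}^{<D})$; functoriality in the test ring $R$ is automatic, since all the maps are induced by continuous maps of spaces and the index set of graded pieces of $\St_{\mathcal{M}}^{<D}$ does not depend on $R$.

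The main obstacle is this middle step: converting the slogan ``$\St_{\mathcal{M}}^{<D}$ is constant in the $D_i$-direction near $P$'' into a genuine isomorphism of functors of non-abelian cohomology sets. This requires the local structure theory of the Stokes sheaf along a deep stratum --- the good formal decomposition together with Mochizuki's and Sabbah's description of $\St_{\mathcal{M}}^{<D}$ as a Stokes-filtered local system, and the attendant extension and local-triviality statements --- which is exactly what governs the way Stokes walls may collide or disappear at turning points; the hypothesis on $i$ is tailored so as to forbid any such degeneration in the direction transverse to $D_i$. Granting that structure theory, the retraction argument above and the descent bookkeeping of the first step are routine.
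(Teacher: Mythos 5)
Your setup coincides with the paper's: reduce to the unramified case by Galois descent, use Mochizuki's extension lemma to identify $H^{1}(\partial P,\St_{\mathcal{M}}^{<D})$ with $H^{1}(\partial\Delta,\St_{\mathcal{M}}^{<D})$, and then show that restriction to $\partial\Delta^{\ast}$ loses nothing. But there is a genuine gap at the decisive step. You determine which pairs $(\alpha,\beta)$ "contribute" to $\St_{\mathcal{M}}^{<D}$ by the pole orders $\mathbf{m}_{\alpha\beta}$ alone, and conclude that since $m_{\alpha\beta,i}\geq 1$ the index sets over $\partial\Delta^{\ast}$ and over $\partial(\Delta\cap D_i)$ agree. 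The order condition, however, is only a \emph{necessary} condition ("only if", as you yourself write); the sheaf $\St_{\mathcal{M}}^{<D}$ is cut out by the sector-dependent condition that $e^{a-b}$ actually lies in $\mathcal{A}^{<D}$ on the given open set, and over $\partial\Delta^{\ast}$ this means rapid decay along $D_1,\dots,D_{l-1}$ only, while near $\partial(\Delta\cap D_i)$ it means rapid decay along all of $D_1,\dots,D_l$. The entire content of the proposition — and the precise place where the hypothesis on $D_i$ is used — is that these two conditions coincide on small sectors $\mathcal{S}$. This is not a matching of index sets: one must show that rapid decay of $e^{1/x_1^{\alpha_1}\cdots x_l^{\alpha_l}}$ in $x_1,\dots,x_{l-1}$ at points with $|x_l|\in[\tfrac{r}{2},r[$ forces $\alpha_j>0$ for all $j<l$ and $\cos(\alpha_1\theta_1+\cdots+\alpha_l\theta_l)<-c$ uniformly on the sector, whence $|e^{a-b}|\leq e^{-c/|x_1|^{\alpha_1}\cdots|x_l|^{\alpha_l}}$ and rapid decay along $D_i$ as well. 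Your argument asserts the conclusion of this estimate ("these two conditions select the same set of pairs", "inside each sector where $e^{a_\alpha-a_\beta}$ is rapidly decaying") without proving it, and your closing paragraph explicitly defers the "middle step" to unstated structure theory — but that middle step \emph{is} the proposition.

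A secondary issue: even granting the equality of the two rapid-decay conditions, passing from it to an isomorphism of non-abelian $H^{1}$'s by Thom isotopy and retraction onto a slice is not routine for a constructible (non-locally-constant) sheaf of groups. The paper handles this concretely: it shows first that for any torsor $\mathcal{T}$ on $\partial\Delta^{\ast}$ the sheaf $\iota_P^{-1}j_{\ast}\mathcal{T}$ is a torsor under $\iota_P^{-1}j_{\ast}j^{-1}\St_{\mathcal{M}}^{<D}$ (this uses that a small sector $\mathcal{S}\cap\partial\Delta^{\ast}$ meets at most one Stokes hyperplane per pair, so Mochizuki's splitting lemma trivializes $\mathcal{T}$ there), and then that the adjunction $\iota_P^{-1}\St_{\mathcal{M}}^{<D}\to\iota_P^{-1}j_{\ast}j^{-1}\St_{\mathcal{M}}^{<D}$ is an isomorphism — that last point being exactly the cosine estimate above. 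If you want to salvage your write-up, replace the isotopy argument by this adjunction mechanism and supply the explicit estimate; both are needed.
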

\begin{proof}
By Galois descent, it is enough to treat the case where $\mathcal{M}$ is unramified.
At the cost of reindexing the components of $D$, we can take local coordinates $(x_1,\dots , x_n)$ centred at $P$ such that $D$ is given by $x_1\cdots x_l = 0$ in a neighbourhood of $P$ and such that $i=l$. In particular,
$$\tilde{X}\simeq ([0, +\infty [ \times S^{1})^{l}\times \mathds{C}^{n-l}$$ 
and $p_D$ reads
$$
((r_k,\theta_k )_{k},y)\longrightarrow ((r_k e^{i\theta_k} )_{k},y)
$$
Let $\Delta$ be a small enough polydisc in $D_{I\setminus \{i\}}$ such that any $\St_{\mathcal{M}}^{<D}$-torsor on $\partial P$ extends canonically to $\partial \Delta$. The map
\eqref{uniso} is then defined as the composition map
$$
\xymatrix{
H^{1}( \partial P, \St_{\mathcal{M}}^{<D})          \ar[r]^-{\sim}&  H^{1}( \partial \Delta, \St_{\mathcal{M}}^{<D}) \ar[r] & 
H^{1}( \partial \Delta^{\ast}, \St_{\mathcal{M}}^{<D})  
}
$$
Let $j :\partial \Delta^{\ast}\longrightarrow \partial D$ be the canonical inclusion. 
\begin{lemme}\label{atorsor} 
For every $\mathcal{T}\in 
H^{1}(\partial \Delta^{\ast}, \St_{\mathcal{M}}^{<D})$, the sheaf $\iota_P^{-1} j_{\ast}\mathcal{T}$ is a $\iota_P^{-1}j_{\ast}j^{-1}\St_{\mathcal{M}}^{<D}$-torsor on $\partial P$.
\end{lemme}
\begin{proof}
Let $(\theta_0, 0)\in \partial P$. Let $I_1, \dots, I_l$ be intervals in $S^1$ such that $\theta_0 \in I_1\times  \dots \times I_l$. Set
$$
\mathcal{S}= ([0, r [ \times I_1) \times \cdots \times  ([0, r [ \times I_l )\times \Delta^{\prime}
 $$
where $r>0$ and where $\Delta^{\prime}$ is a neighbourhood of $0$ in $\mathds{C}^{n-l}$. To prove lemma \ref{atorsor}, we have to prove that $\mathcal{T}$ is the trivial $\St_{\mathcal{M}}^{<D}$-torsor on $\mathcal{S} \cap \partial\Delta^{\ast}$ for $\mathcal{S}$ small enough. For $\mathcal{S}$ small enough, we have 
$$
\mathcal{S} \cap \partial\Delta^{\ast}= (\{0\} \times I_1) \times \cdots \times (\{0\} \times I_{l-1} )\times  (]0, r [ \times I_l )\times \Delta^{\prime}
 $$
Let $a,b$ be distinct irregular values for $\mathcal{M}$ at $P$. Put $F_{a,b}:=  \Real(a-b)|z^{-\ord(a-b)}|$ and put $\ord(a-b)=-(\alpha_{ab}(1), \dots, \alpha_{ab}(l))\in \mathds{Z}^l_{\leq 0}$. By definition, the Stokes hyperplanes for $(a,b)$ are the connected components of $F_{a,b}\circ p_D=0$. They are of the form
$$
\theta_{ab}(z)-\sum_{i=1}^{l}\alpha_{ab}(i) \theta_i= \frac{\pi}{2}+ k\pi
$$
where $k\in \mathds{Z}$ and where $\theta_{ab}$ is a continuous function defined in a neigbourhood of $0$ in $\mathds{C}^n$. For $\mathcal{S}$ small enough, the open 
set $\mathcal{S} \cap \partial\Delta^{\ast}$ thus meets at most one Stokes hyperplane for $(a,b)$. From Mochizuki's splitting lemma \cite[4.1.5]{Mochizuki1}, we deduce that  $\mathcal{T}$ is trivial on $\mathcal{S} \cap \partial\Delta^{\ast}$. This finishes the proof of lemma \ref{atorsor}.
\end{proof}
 From lemma \ref{atorsor}, $\iota_P^{-1}j_{\ast}$ induces a morphism of functors
 \begin{equation}\label{inverse}
\xymatrix{
H^{1}( \partial \Delta^{\ast}, \St_{\mathcal{M}}^{<D})          \ar[r]&  
H^{1}( \partial P, \iota_P^{-1}j_{\ast}j^{-1}\St_{\mathcal{M}}^{<D})  
}
\end{equation}
 If we prove that the adjunction morphism 
\begin{equation}\label{adjmorp}
\xymatrix{
\iota_P^{-1} \St_{\mathcal{M}}^{<D}          \ar[r] &  
\iota_P^{-1}j_{\ast}j^{-1}\St_{\mathcal{M}}^{<D}   
}
\end{equation}
is an isomorphism, then \eqref{inverse} will provide us with an inverse for \eqref{uniso}. We now prove that \eqref{adjmorp} is an isomorphism. Injectivity is obvious, so we are left to prove surjectivity. This is a local statement on $\partial P$. Hence, Mochizuki's asymptotic development theorem \cite[3.2.10]{Mochizuki1} reduces the question to the case where $\mathcal{M}$ is split unramified. If $\mathcal{I}$ denotes the set of irregular values for $\mathcal{M}$ at $P$, this means that 
$$
\mathcal{M}= \bigoplus_{a\in \mathcal{I}}\mathcal{E}^{a} \otimes \mathcal{R}_a
$$
where $\mathcal{E}^{a}=(\mathcal{O}_{X}(\ast D), d-da)$ and where $\mathcal{R}_a$ is regular. Let $i_a : \mathcal{E}^{a} \otimes \mathcal{R}_a\longrightarrow \mathcal{N}$ be the canonical inclusion and let $p_a : \mathcal{N} \longrightarrow \mathcal{E}^{a} \otimes \mathcal{R}_a$ be the canonical projection. Let $I_1, \dots , I_l$ be intervals in $S^1$. Set
$$
\mathcal{S}= ([0, r [ \times I_1) \times \cdots \times  ([0, r [ \times I_l )\times \Delta^{\prime}
 $$
where $r>0$ is small enough and where $\Delta^{\prime}$ is a small enough neighbourhood of $0$ in $\mathds{C}^{n-l}$.
Sections of $\St_{\mathcal{M}}^{<D}$ on $\mathcal{S} \cap \partial D$ are automorphisms of $\mathcal{M}$ on $\mathcal{S}\cap (X\setminus D)$ of the form $\Id + f$ where $p_a f i_b=0$ unless 
\begin{equation}\label{rapidD}
e^{a-b}\in \Gamma(\mathcal{S} \cap \partial D, \mathcal{A}^{<D})
\end{equation}
Sections of $\St_{\mathcal{M}}^{<D}$  on 
$$
\mathcal{S} \cap \partial\Delta^{\ast}= (\{0\} \times I_1) \times \cdots \times (\{0\} \times I_{l-1} )\times  (]0, r [ \times I_l )\times \Delta^{\prime}
 $$
are automorphisms of $\mathcal{M}$ on $\mathcal{S}\cap (X\setminus D)$ of the form $\Id + f$ where $p_a f i_b=0$ unless 
\begin{equation}\label{rapiddecay}
e^{a-b}\in \Gamma( \mathcal{S} \cap \partial\Delta^{\ast}, \mathcal{A}^{<D})
\end{equation}
We thus have to show that for every distinct irregular values $a,b$ for $\mathcal{M}$, the conditions  \eqref{rapidD} and \eqref{rapiddecay} are equivalent for a small enough choice of $\mathcal{S}$. A change of variable reduces the problem to the case where $a-b = 1/x_1^{\alpha_1}\cdots x_l^{\alpha_l}$ where $(\alpha_1, \dots , \alpha_l)\in \mathds{N}^{l-1}\times \mathds{N}^{\ast}$. Note that condition \eqref{rapidD} trivially implies condition \eqref{rapiddecay}.
Suppose that $e^{1/x^{\alpha_1}_1\cdots x_l^{\alpha_l}}\in \Gamma(\mathcal{S} \cap \partial\Delta^{\ast}, \mathcal{A}^{<D})$. At the cost of shrinking $\mathcal{S}$, this implies that there exists a constant $C>0$ such that for every
$$
(x_1,\dots, x_n)\in  (]0, r [ \times I_1) \times \cdots \times (]0, r [  \times I_{l-1} )\times  ([\frac{r}{2}, r [ \times I_l )\times \Delta^{\prime}
$$ 
we have 
$$
|e^{1/x_1^{\alpha_1}\cdots x_l^{\alpha_l}} | \leq C|x_1|\cdots |x_{l-1}|
$$ 
Writing $x_i=(r_i, \theta_i)$ for $i=1, \dots , l-1$, this means
$$
 e^{\cos(\alpha_1 \theta_1+\cdots +  \alpha_{l} \theta_{l})/r_1^{\alpha_1}\cdots r_{l}^{\alpha_{l}}}  \leq C r_1\cdots r_{l-1} 
$$
In particular, $\alpha_i >0$ for $i=1, \dots , l-1$ and $\cos(\alpha_1 \theta_1+\cdots +  \alpha_{l} \theta_{l})<0$ for every 
$(\theta_1, \dots , \theta_l)\in I_1 \times \cdots  \times I_l$. At the cost of shrinking $\mathcal{S}$ further, there exists $c>0$ such that $\cos(\alpha_1 \theta_1+\cdots +  \alpha_{l} \theta_{l})<-c $ on $I_1 \times \cdots  \times I_l$. Then, we have 
$$|e^{1/x_1^{\alpha_1}\cdots x_l^{\alpha_l}}  | \leq  e^{-c/|x_1|^{\alpha_1}\cdots |x_l|^{\alpha_l}}$$ on $\mathcal{S}$. Since
 $\alpha_i >0$ for $i=1, \dots , l$, we deduce that \eqref{rapiddecay} holds, which proves the equivalence between conditions \eqref{rapidD} and \eqref{rapiddecay}. This finishes the proof of proposition \ref{uniso}, and thus the proof of Theorem \ref{rep}.
 
 \end{proof}
 
 We store the following immediate corollary from proposition \ref{smooth} for later use.
 
 \begin{corollaire}\label{deUaVfermee}
Let $I \subset  \llbracket 1, m\rrbracket$. Let $U\subset V \subset D_I^{\circ}$ be non empty open subsets in $D_I^{\circ}$ such that $V$ is connected. Then, the natural morphism
$$
\xymatrix{
H^{1}( \partial V , \St_{\mathcal{M}}^{<D})          \ar[r] & H^{1}( \partial U, \St_{\mathcal{M}}^{<D})     
}
$$ 
is a closed immersion. 
 \end{corollaire}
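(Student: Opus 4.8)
The plan is to deduce this formally from Proposition \ref{smooth}, using only the elementary fact that a morphism which becomes a closed immersion after composition with a separated morphism is itself a closed immersion.

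Fix a point $P\in U$. We then have restriction morphisms $a\colon H^{1}(\partial V,\St_{\mathcal{M}}^{<D})\to H^{1}(\partial U,\St_{\mathcal{M}}^{<D})$, $b\colon H^{1}(\partial U,\St_{\mathcal{M}}^{<D})\to H^{1}(\partial P,\St_{\mathcal{M}}^{<D})$ and $c\colon H^{1}(\partial V,\St_{\mathcal{M}}^{<D})\to H^{1}(\partial P,\St_{\mathcal{M}}^{<D})$ with $c=b\circ a$. By Proposition \ref{smooth}, $H^{1}(\partial P,\St_{\mathcal{M}}^{<D})$ is an affine scheme of finite type over $\mathds{C}$, hence separated and noetherian.

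The key step is that $c$ is a closed immersion. Exactly as in the proof of Proposition \ref{smooth}, $R^{1}p_{I\ast}^{\circ}\St_{\mathcal{M}}^{<D}$ is a local system of functors on $D_{I}^{\circ}$ in the sense of \cite{SimpsonII}, with stalk $H^{1}(\partial P,\St_{\mathcal{M}}^{<D})$ at $P$; restricting it to the connected open $V$ presents $H^{1}(\partial V,\St_{\mathcal{M}}^{<D})$ as the functor of invariants for the monodromy action of $\pi_{1}(V,P)$ on $H^{1}(\partial P,\St_{\mathcal{M}}^{<D})$, that is, as the scheme-theoretic fixed locus of that action. For a single monodromy automorphism $g$ of the affine scheme $H^{1}(\partial P,\St_{\mathcal{M}}^{<D})$ the fixed locus is the equaliser of $g$ and $\Id$, which is a closed subscheme because the scheme is separated; the common fixed locus of the whole group is the intersection of these, hence closed by noetherianity. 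Thus $c$ is a closed immersion and, in particular, $H^{1}(\partial V,\St_{\mathcal{M}}^{<D})$ is affine of finite type over $\mathds{C}$. The same argument applied to each connected component $U_{j}$ of $U$ shows that $H^{1}(\partial U_{j},\St_{\mathcal{M}}^{<D})$, and hence $H^{1}(\partial U,\St_{\mathcal{M}}^{<D})=\prod_{j}H^{1}(\partial U_{j},\St_{\mathcal{M}}^{<D})$, is affine of finite type over $\mathds{C}$; in particular $b$ is separated. (When $U$ is connected, $b$ is itself a closed immersion by the same reasoning.)

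We conclude: $c=b\circ a$ is a closed immersion and $b$ is separated, whence $a$ is a closed immersion, which is the assertion. The one point requiring care is to verify that the local system of functors formalism of \cite{SimpsonII} invoked in the proof of Proposition \ref{smooth} over $D_{I}^{\circ}$ applies verbatim over the arbitrary connected open $V$ (and over each $U_{j}$), and that its invariants functor is indeed represented by the scheme-theoretic fixed locus described above; granting this, no geometry remains and the corollary is a formal consequence of Proposition \ref{smooth} and the separatedness of affine schemes. I do not expect this bookkeeping to present a genuine obstacle.
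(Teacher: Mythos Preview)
Your proof is correct and follows essentially the same approach as the paper. The paper's argument is the same factorization through a point $P\in U$, invoking Proposition~\ref{smooth} to get that the diagonal $H^{1}(\partial V,\St_{\mathcal{M}}^{<D})\to H^{1}(\partial P,\St_{\mathcal{M}}^{<D})$ is a closed immersion between affine schemes, and then concluding that the horizontal arrow is a closed immersion; you have simply spelled out the cancellation step (proper monomorphism $=$ closed immersion, via separatedness of $b$) and made explicit that the local-system-of-functors argument from the proof of Proposition~\ref{smooth} carries over verbatim to any connected open $V\subset D_I^\circ$, which the paper leaves implicit.
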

 \begin{proof}
 Choose a point $P\in U$. Then, there is a factorization
 $$
 \xymatrix{
H^{1}(\partial V, \St_{\mathcal{M}}^{<D})   \ar[rd] \ar[r] &  H^{1}(\partial U, \St_{\mathcal{M}}^{<D}) \ar[d]  \\
&    H^{1}(\partial P, \St_{\mathcal{M}}^{<D})
}
 $$
From proposition \ref{smooth}, the diagonal arrow is a closed immersion between affine schemes. Hence, the horizontal arrow is a closed immersion.
 \end{proof}
 
%As a by-product of discussion \ref{smooth}, we get that the restriction morphism \eqref{fZP} is a closed immersion. If $\mathcal{I}_P$ is \textit{very good} at $P\in \Sing D$, that is any difference of two distinct elements of $\mathcal{I}_P$ has poles along every component of $D$ passing through $P$, then discussion \ref{singpt} shows that \eqref{fPZ} is an isomorphism. In general, 
\subsection{Passing from one stratum to an other stratum is a closed immersion}
The next proposition is the technical core of this paper.
\begin{proposition}\label{verygoodclosedimmersion}
Let $I \subset  \llbracket 1, m\rrbracket$ non empty. Let $i\in I$. Let $P\in D_I^{\circ}$. Then, for a small enough  neighbourhood $\Delta$  of $P$ in $D_{i}$, the morphism of schemes  
\begin{equation}\label{cestiso}
\xymatrix{
H^{1}( \partial P, \St_{\mathcal{M}}^{<D})          \ar[r] &  
H^{1}( \partial \Delta^{\ast}, \St_{\mathcal{M}}^{<D})  
}
\end{equation}
is a closed immersion, where $\Delta^{\ast}=\Delta\setminus \bigcup_{j\in I\setminus \{i\}} D_j\subset D_{i}^{\circ}$.
\end{proposition}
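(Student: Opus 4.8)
The plan is to argue by induction on $\Card I$. By Galois descent one reduces to $\mathcal{M}$ unramified, and we fix local coordinates $(x_1,\dots,x_n)$ centred at $P$ with $D=\{x_1\cdots x_l=0\}$ and $I=\{1,\dots,l\}$; for $\Delta$ small enough, $\Delta^{\ast}=\Delta\cap D_i^{\circ}$ is the complement of coordinate hypersurfaces in a polydisc, hence connected, and \eqref{cestiso} is the composite of the canonical–extension isomorphism $H^{1}(\partial P,\St_{\mathcal{M}}^{<D})\xrightarrow{\sim}H^{1}(\partial\Delta,\St_{\mathcal{M}}^{<D})$ with the restriction $H^{1}(\partial\Delta,\St_{\mathcal{M}}^{<D})\to H^{1}(\partial\Delta^{\ast},\St_{\mathcal{M}}^{<D})$. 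If $\Card I=1$, then $\Delta^{\ast}=\Delta$ is a contractible polydisc inside $D_i^{\circ}$ and \eqref{cestiso} is an isomorphism by Mochizuki's extension theorem. If $\Card I\geq 2$, recall (cf. the footnote to Proposition \ref{singpt}) that at least one component $D_k$, $k\in I$, is \emph{good}, in the sense that $a-b$ has a pole along $D_k$ for any two distinct irregular values $a,b$ of $\mathcal{M}$ at $P$.

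Suppose first there is a good component $D_k$ with $k\neq i$. Proposition \ref{singpt} applied to $D_k$ yields an isomorphism $H^{1}(\partial P,\St_{\mathcal{M}}^{<D})\xrightarrow{\sim}H^{1}(\partial\Delta_{k}^{\ast},\St_{\mathcal{M}}^{<D})$, where $\Delta_{k}\subset D_{I\setminus\{k\}}$ is a small neighbourhood of $P$, which we may take inside $\Delta$, and $\Delta_{k}^{\ast}=\Delta_{k}\setminus D_k\subset D_{I\setminus\{k\}}^{\circ}$. Fix $P'\in\Delta_{k}^{\ast}$; Corollary \ref{deUaVfermee} gives a closed immersion $H^{1}(\partial\Delta_{k}^{\ast},\St_{\mathcal{M}}^{<D})\to H^{1}(\partial P',\St_{\mathcal{M}}^{<D})$. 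Since $i\in I\setminus\{k\}$, the induction hypothesis applies to $(I\setminus\{k\},i,P')$ and provides, for a small enough neighbourhood $\Delta^{\prime}$ of $P'$ in $D_i$ — which, $P'$ lying in $\Delta$, we may take inside $\Delta$ — a closed immersion $H^{1}(\partial P',\St_{\mathcal{M}}^{<D})\to H^{1}(\partial\Delta^{\prime\ast},\St_{\mathcal{M}}^{<D})$ with $\Delta^{\prime\ast}=\Delta^{\prime}\cap D_i^{\circ}\subset\Delta^{\ast}$. Composing, $H^{1}(\partial P,\St_{\mathcal{M}}^{<D})\to H^{1}(\partial\Delta^{\prime\ast},\St_{\mathcal{M}}^{<D})$ is a closed immersion. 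It also factors as \eqref{cestiso} followed by the restriction $H^{1}(\partial\Delta^{\ast},\St_{\mathcal{M}}^{<D})\to H^{1}(\partial\Delta^{\prime\ast},\St_{\mathcal{M}}^{<D})$, which is a closed immersion by Corollary \ref{deUaVfermee} ($\Delta^{\ast}$ being connected). Hence \eqref{cestiso} is a monomorphism, and it is proper by the cancellation property of properness; being a proper monomorphism, it is a closed immersion. The equality of the two composites is a routine check using uniqueness of Mochizuki's canonical extensions.

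There remains the case in which $D_i$ is the only good component of $\mathcal{M}$ at $P$; this is the heart of the proposition and one argues directly, in the style of the proof of Proposition \ref{singpt}. By Mochizuki's asymptotic development theorem we may take $\mathcal{M}=\bigoplus_{a\in\mathcal{I}}\mathcal{E}^{a}\otimes\mathcal{R}_{a}$ split unramified. Let $j:\partial\Delta^{\ast}\to\partial D$ be the inclusion. Exactly as in Lemma \ref{atorsor} — a small flag sector of $\partial\Delta^{\ast}$ meets at most one Stokes hyperplane, so Mochizuki's splitting lemma trivialises torsors on it — the functor $\iota_P^{-1}j_{\ast}$ carries $\St_{\mathcal{M}}^{<D}$-torsors on $\partial\Delta^{\ast}$ to $\iota_P^{-1}j_{\ast}j^{-1}\St_{\mathcal{M}}^{<D}$-torsors on $\partial P$ and, since $\partial\Delta$ deformation retracts onto $\partial P$, induces an isomorphism $H^{1}(\partial\Delta^{\ast},\St_{\mathcal{M}}^{<D})\xrightarrow{\sim}H^{1}(\partial P,\iota_P^{-1}j_{\ast}j^{-1}\St_{\mathcal{M}}^{<D})$ identifying \eqref{cestiso} with the morphism induced by the adjunction $\iota_P^{-1}\St_{\mathcal{M}}^{<D}\to\iota_P^{-1}j_{\ast}j^{-1}\St_{\mathcal{M}}^{<D}$. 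Running the rapid–decay computation of Proposition \ref{singpt} for the single component $D_i$ instead of a good one — where it gave the equivalence \eqref{rapidD}$\Leftrightarrow$\eqref{rapiddecay} — now shows that this adjunction identifies $\iota_P^{-1}\St_{\mathcal{M}}^{<D}$ with the subgroup subsheaf of $\iota_P^{-1}j_{\ast}j^{-1}\St_{\mathcal{M}}^{<D}$ cut out by the vanishing of the off‑diagonal blocks indexed by pairs $(a,b)$ whose difference has a pole along $D_i$ but not along some other component $D_j$, $j\in I$: such blocks are allowed in $\iota_P^{-1}j_{\ast}j^{-1}\St_{\mathcal{M}}^{<D}$, which records rapid decay along $D_i$ alone, but are killed in $\St_{\mathcal{M}}^{<D}$, which near $\partial P$ records rapid decay along all of $D$. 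One then checks that the morphism $H^{1}(\partial P,\St_{\mathcal{M}}^{<D})\to H^{1}(\partial P,\iota_P^{-1}j_{\ast}j^{-1}\St_{\mathcal{M}}^{<D})$ attached to this inclusion is a closed immersion: filtering the unipotent group sheaves by their descending central series, on each graded piece the induced map is an injective $\mathds{C}$‑linear map of vector spaces — injectivity because the relevant global sections over $\partial P\simeq(S^{1})^{l}$ vanish, for the reason underlying Lemma \ref{nontrivialauto} — and one concludes by dévissage along the filtration and the representability results of \cite{TeySke}. This completes the induction, hence the proof.

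The step I expect to be the main obstacle is this last one: extracting from the asymptotic analysis the statement that the inclusion of Stokes subgroup subsheaves along the corner $\partial P$ induces a \emph{closed} immersion — not merely a monomorphism — on non‑abelian cohomology, uniformly in the test ring, i.e. that "the deficient off‑diagonal transition data extend across $\partial P$" is a closed condition. Everything else is formal, given Propositions \ref{smooth} and \ref{singpt} and Corollary \ref{deUaVfermee}.
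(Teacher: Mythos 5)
Your reduction in the first case (a good component $D_k$ with $k\neq i$ exists) is a sensible reorganization: composing Proposition \ref{singpt}, Corollary \ref{deUaVfermee} and the inductive hypothesis and then cancelling does yield a closed immersion there, modulo the unchecked commutativity of the diagram. But it only postpones the problem: the induction must eventually land in your second case, and that case is where the entire content of the proposition lies. There the gap you yourself flag is real and is not closed by what you write. After identifying $H^{1}(\partial\Delta^{\ast},\St_{\mathcal{M}}^{<D})$ with $H^{1}(\partial P,\iota_P^{-1}j_{\ast}j^{-1}\St_{\mathcal{M}}^{<D})$, you are left with an inclusion of sheaves of unipotent algebraic groups on $\partial P$ and must show that the induced map of $H^{1}$-functors is a closed immersion of schemes. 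Your proposed dévissage along the descending central series does not do this: for a central extension $1\to Z\to G\to G/Z\to 1$ the non-abelian cohomology sequence makes the fibres of $H^{1}(G)\to H^{1}(G/Z)$ \emph{quotients} of $H^{1}$ of twists of $Z$, not subspaces, so ``injective on each graded piece'' does not assemble into ``closed immersion on $H^{1}$''; and the ambient sheaf modulo the subsheaf is not controlled by such a filtration in any case. This is exactly the step the paper treats as its technical core: one forms the quotient sheaf $\mathcal{Q}$ of $\St_{\mathcal{M}}^{<D_i}$ by $\iota_{D_i}^{-1}\St_{\mathcal{M}}^{<D}$, proves $H^{0}(\partial P,\mathcal{Q})=0$ via flatness of $\End\mathcal{M}$, so that $H^{1}(\partial P,\St_{\mathcal{M}}^{<D})$ is the fibre of $H^{1}(\partial P,\St_{\mathcal{M}}^{<D_i})\to H^{1}(\partial P,\mathcal{Q})$ over the trivial torsor; since $H^{1}(\partial P,\mathcal{Q})$ is not known to be a scheme, one replaces it by the cocycle scheme $Z^{1}(\mathcal{V},\mathcal{Q})$ using a section of $Z^{1}\to H^{1}$ from \cite[2.7.3]{BV} and the universal torsor to choose the refinement $\mathcal{V}$, obtaining a cartesian square against the orbit map $G_{\mathcal{V}}\to Z^{1}(\mathcal{V},\mathcal{Q})$ of the trivial cocycle; that this orbit is closed is then the Kostant--Rosenlicht theorem for the unipotent group $G_{\mathcal{V}}$. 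Nothing in your argument substitutes for this chain.

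Two secondary points. First, you invoke Mochizuki's asymptotic development theorem to assume $\mathcal{M}$ split unramified for the whole of your second case; that theorem is local on $\partial P$, so it can legitimately be used (as in the paper, and as in your appeal to Lemma \ref{atorsor}) only to verify sheaf-level statements such as the adjunction morphism being an isomorphism, not to compute the global functor $H^{1}(\partial P,-)$. Second, the equality of the two composites in your first case (compatibility of the various canonical extensions and restrictions) is asserted rather than checked; it is believable, but the cancellation argument for closed immersions rests on it, so it should be spelled out.
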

\begin{proof}
Note that both functors appearing in  \eqref{cestiso} are affine schemes as a consequence of  proposition \ref{smooth}. Let $j :\partial \Delta^{\ast}\longrightarrow \partial D$ be the canonical inclusion. The sheaf of algebraic groups $\iota_{D_i}^{-1}\St_{\mathcal{M}}^{<D}$ is distinguished in $\St_{\mathcal{M}}^{<D_i}$. We thus have an exact sequence of sheaves of algebraic groups on $\partial D_i$
$$
\xymatrix{
1 \ar[r] &  \iota_{D_i}^{-1}   \St_{\mathcal{M}}^{<D}  \ar[r]^-{\iota} &   \St_{\mathcal{M}}^{<D_i} \ar[r] &    \mathcal{Q}  \ar[r] &   1
}
$$
There is an adjunction morphism
\begin{equation}\label{adjfact}
\xymatrix{
\iota_P^{-1}  \St_{\mathcal{M}}^{<D_i} \ar[r] &  
\iota_P^{-1}j_{\ast}j^{-1}\St_{\mathcal{M}}^{<D_i}=\iota_P^{-1}j_{\ast}j_{}^{-1}\St_{\mathcal{M}}^{<D}
}
\end{equation}
Hence, there is a factorization
\begin{equation}\label{factorization}
\xymatrix{
H^{1}(\partial P, \St_{\mathcal{M}}^{<D})   \ar[rd] \ar[r]^{\iota_{\ast}} &  H^{1}(\partial P, \St_{\mathcal{M}}^{<D_i}) \ar[d]  \\
&    H^{1}(\partial\Delta^{\ast}, \St_{\mathcal{M}}^{<D})
}
\end{equation}
From a similar argument to that in proposition \ref{singpt}, the adjunction morphism \eqref{adjfact} is an isomorphism of sheaves on $\partial P$. Hence, the vertical arrow in \eqref{factorization} is an isomorphism of functors. From proposition \ref{smooth}, we deduce first that $H^{1}(\partial P, \St_{\mathcal{M}}^{<D_i})$  is an affine scheme of finite type over $\mathds{C}$. Second, we deduce that to prove proposition \ref{verygoodclosedimmersion}, it is enough to prove that 
$$
\xymatrix{
\iota_\ast : H^{1}(\partial P, \St_{\mathcal{M}}^{<D})   \ar[r] &  H^{1}(\partial P,  \St_{\mathcal{M}}^{<D_i})
}
$$
is a closed immersion. 
From \cite[I.2]{JFrenkel}, there is an exact sequence of pointed functors
\begin{equation}\label{exactsequJFRenkel}
\xymatrix{
  H^0( \partial P, \mathcal{Q} ) \ar[r] &    H^1( \partial P, \St_{\mathcal{M}}^{<D}  )\ar[r]^-{\iota_{\ast}} &  H^1( \partial P,  \St_{\mathcal{M}}^{<D_i} ) \ar[r] &    H^1( \partial P,  \mathcal{Q}   )
}
\end{equation}
Let us prove that $H^0( \partial P, \mathcal{Q} )$ is trivial. The complex of sheaves 
$$
\xymatrix{
  \St_{\mathcal{M}}^{<D_i}  \ar[r]&   \partial\End \mathcal{M}_{\hat{D}}     \ar[r]& \partial\End \mathcal{M}_{\hat{D_i}}   
}
$$
induces a sequence of sheaves
\begin{equation}\label{exQ}
\xymatrix{
  \mathcal{Q}  \ar@{^{(}->}[r]&   \partial\End \mathcal{M}_{ \hat{D}}     \ar[r]&\partial \End \mathcal{M}_{ \hat{D_i}}   
}
\end{equation}
By applying $p_{D\ast}$ and then looking at the germs at $P$, we deduce from \cite[p44]{Sabbahdim} the following sequence
\begin{equation}\label{exQ2}
\xymatrix{
0\ar[r] &  H^0( \partial P, \mathcal{Q} ) \ar[r]&   \End \mathcal{M}_{\hat{D}, P}     \ar[r]& \End \mathcal{M}_{\hat{D_i}, P}   
}
\end{equation}
By flatness of $\End \mathcal{M}$ over $\mathcal{O}_X$, the second map in \eqref{exQ2} is injective. Hence,  $H^0( \partial P, \mathcal{Q} )$ is trivial. Thus, the following diagram of functors 
\begin{equation}\label{carteiota}
\xymatrix{
 H^1( \partial P, \St_{\mathcal{M}}^{<D}  ) \ar[d]_-{\iota_{\ast}} \ar[r]&   \ast     \ar[d]    \\
  H^{1}(\partial P, \St_{\mathcal{M}}^{<D_i})     \ar[r]&  H^1( \partial P, \mathcal{Q}  ) 
}
\end{equation}
is cartesian, where $\ast$ denotes the trivial $\mathcal{Q}$-torsor. If we knew that $H^1( \partial P, \mathcal{Q})$ is a scheme, we would directly obtain that $\iota_{\ast}$ is a closed immersion. This question does not seem to follow from the use of skeletons in \cite{TeySke}. We will circumvent this problem with a group theoretic argument. \\ \indent
The Stokes hyperplanes of $\mathcal{M}$ above $P$ form a finite family of closed subsets in $\partial P$. Note that the associated stratification of $\partial P$ is such that the restriction of $\St_{\mathcal{M}}^{<D_i}$ to each stratum is locally constant. Hence, the same argument as in \cite[1.9.1]{TeySke} applies. In particular, there exists a cover $\mathcal{U}$ of $\partial P$ by open subsets such that the  morphism of affine schemes 
\begin{equation}\label{goodZtoH}
\xymatrix{
 Z^1( \mathcal{U},\St_{\mathcal{M}}^{<D_i})  \ar[r]&   H^{1}(\partial P, \St_{\mathcal{M}}^{<D_i}) 
 }   
\end{equation}
is surjective at the level of $R$-points for every $R\in \mathds{C}$-alg. From \cite[2.7.3]{BV}, the morphism \eqref{goodZtoH} admits a section. Composing this section with
$$
\xymatrix{
 Z^1( \mathcal{U}, \St_{\mathcal{M}}^{<D_i})  \ar[r]&    Z^1( \mathcal{U},  \mathcal{Q} )   
 }
$$
gives rise to a commutative triangle of functors
\begin{equation}\label{fact}
\xymatrix{
    H^{1}(\partial P, \St_{\mathcal{M}}^{<D_i})   \ar[r] \ar[rd]   & H^1( \partial P, \mathcal{Q})       \\
    &    Z^1( \mathcal{U},  \mathcal{Q} )  \ar[u]
  }
\end{equation}
The algebraic group
$$
G_{\mathcal{U}} :=  \displaystyle{\prod}_{U\in \mathcal{U} }  \Gamma(U, \mathcal{Q})
$$
acts on $Z^1( \mathcal{U},  \mathcal{Q} )$. Let 
\begin{equation}\label{mutrivialcocycle}
G_{\mathcal{U}}\longrightarrow Z^1( \mathcal{U},  \mathcal{Q} )
\end{equation}
 be the morphism of schemes obtained by restricting the action of $G_{\mathcal{U} }$ to the trivial cocycle. Since $H^0( \partial P, \mathcal{Q} )\simeq 0$, the morphism \eqref{mutrivialcocycle} is a monomorphism. There is a commutative diagram
\begin{equation}\label{incompletediagram}
\xymatrix{
 H^1( \partial P, \St_{\mathcal{M}}^{<D}  ) \ar[d]_-{\iota_{\ast}} &   G_{\mathcal{U}} \ar[d]  \ar[r]  &   \ast  \ar[d]\\
  H^{1}(\partial P, \St_{\mathcal{M}}^{<D_i})     \ar[r]& Z^1( \mathcal{U},  \mathcal{Q} ) \ar[r] & H^1( \partial P, \mathcal{Q}  ) 
}
\end{equation}
 We would like to reduce the problem of proving that $ \iota_{\ast}$ is a closed immersion to the problem of proving that \eqref{mutrivialcocycle} is a closed immersion. To do this, we would like to fill the left diagram in \eqref{incompletediagram} into a cartesian square. Note that the right square in \eqref{incompletediagram} may not be cartesian since there may be cocycles in $Z^1( \mathcal{U},  \mathcal{Q})$ that are cohomologous to the trivial cocycle only after passing to a refinement of $\mathcal{U}$. To treat this problem, we argue by using the universal torsor under $\St_{\mathcal{M}}^{<D}$on $\partial P$. \\ \indent
Let $\mathcal{T}^{\univ}$ be the universal torsor under $\St_{\mathcal{M}}^{<D}$ on $\partial P$. Let $A$ be the ring of functions of  $H^1( \partial P, \St_{\mathcal{M}}^{<D})$. From the commutativity of \eqref{fact}, the image $\gamma$ of $\mathcal{T}^{\univ}$ in $Z^1( \mathcal{U},  \mathcal{Q}(A) )$ induces the trivial 
 $\mathcal{Q}(A)$-torsor. Hence, there exists a refinement $\mathcal{V}$ of $\mathcal{U}$ such that $\gamma_{|\mathcal{V}}$ is cohomologous to the trivial cocycle, that is, such that $\gamma_{|\mathcal{V}}$ lies in the image of 
$G_{\mathcal{V}}(A)\longrightarrow Z^1( \mathcal{V},  \mathcal{Q}(A))$. Hence, there is a commutative square 
\begin{equation}\label{completediagram}
 \xymatrix{
 H^1( \partial P, \St_{\mathcal{M}}^{<D}  )\ar[r]  \ar[d]_-{\iota_{\ast}} &   G_{\mathcal{V}} \ar[d] \\
  H^{1}(\partial P, \St_{\mathcal{M}}^{<D_i})     \ar[r]& Z^1( \mathcal{V},  \mathcal{Q} ) 
}
 \end{equation}
This square is cartesian. Indeed, let $F$  be the fibre product of $H^{1}(\partial P, \St_{\mathcal{M}}^{<D_i})$  with $G_{\mathcal{V}}$ over $Z^1( \mathcal{V},  \mathcal{Q})$. By definition, there is a commutative diagram of functors
 \begin{equation}\label{trianglesubrunctor}
 \xymatrix{
 H^1( \partial P, \St_{\mathcal{M}}^{<D}  )\ar[r]  \ar[rd]_{\iota_{\ast}} &   F       \ar[d] \\
                                                                       & H^{1}(\partial P, \St_{\mathcal{M}}^{<D_i})  
}
 \end{equation}
%Since $\iota_{\ast}$ is a monomorphism, so is the horizontal arrow in \eqref{trianglesubrunctor}. Hence,  $H^1( \partial P, \St_{\mathcal{M}}^{<D}  )$ is a sub-functor of $F$. 
Since the right vertical arrow in \eqref{completediagram} is a monomorphism,  $F$ is a sub-functor of $H^{1}(\partial P, \St_{\mathcal{M}}^{<D_i})$. Hence, all maps in \eqref{trianglesubrunctor} are inclusions of functors. We are thus left to prove that $F$ is a sub-functor of $H^1( \partial P, \St_{\mathcal{M}}^{<D})$. This is an immediate consequence of the fact that $H^1( \partial P, \St_{\mathcal{M}}^{<D})$ is the functor of torsors $\mathcal{T}\in H^{1}(\partial P, \St_{\mathcal{M}}^{<D_i})$ inducing the trivial $ \mathcal{Q}$-torsor. \\ \indent 
Hence, to prove that  $\iota_{\ast}$ is a closed immersion, we are left to show that \eqref{mutrivialcocycle} for $\mathcal{V}$ is a closed immersion. 
%
%voir cours Milne p142. Pas d'hypothèse de reduction pour la factorisation de mu
%
From the general theory of algebraic group actions, the map \eqref{mutrivialcocycle} factors as 
$$
\xymatrix{
G_{\mathcal{V}}\ar[r]^-{\alpha}    &  \mathcal{O} \ar[r]^-{\beta}  &  Z^1( \mathcal{V},  \mathcal{Q} )
}
$$
where $\alpha$ is faithfully flat, where $\mathcal{O}$ is the orbit of the trivial cocycle under $G_{\mathcal{V}}$ and where $\beta$ is an immersion of schemes. Since smoothness is a local property for the fppf topology \cite[05B5]{SPDescent}, the smoothness of $G_{\mathcal{V}}$ implies that $\mathcal{O}$  is smooth. By definition, $\alpha$ is an isomorphism at the level of $\mathds{C}$-points. Hence, $\alpha$ is an isomorphism of varieties. We are thus left to show that $\mathcal{O}$ is closed in $Z^1( \mathcal{V},  \mathcal{Q} )$. It is enough to show that $\mathcal{O}$ is closed in $Z^1( \mathcal{V},  \mathcal{Q} )^{\reduit}$. From Kostant-Rosenlicht theorem \cite[I 4.10]{BorelAlgGp}, it is enough to show that $G_{\mathcal{V}}$ is a unipotent algebraic group, which is a consequence of the fact that the Stokes sheaves are sheaves of unipotent algebraic groups. This concludes the proof of  
proposition \ref{verygoodclosedimmersion}.

\end{proof}

\subsection{Proof of Theorem \ref{cestuneimmersionfermee}}

Let $U\subset V\subset D$ be non empty open subsets in $D$ such that $V$ is connected. We want to show that the natural morphism
\begin{equation}\label{closedimmersion0}
\xymatrix{
H^{1}( \partial V, \St_{\mathcal{M}}^{<D})            \ar[r] &  
H^{1}(\partial U, \St_{\mathcal{M}}^{<D})   
}
\end{equation}
is a closed immersion of affine schemes of finite type over $\mathds{C}$.
Let $A$ be the set of open subsets $U^{\prime}$ in $V$ containing $U$ and such that the natural morphism 
$$
H^{1}( \partial U^{\prime}, \St_{\mathcal{M}}^{<D}) \longrightarrow  H^{1}(\partial U, \St_{\mathcal{M}}^{<D})
$$ 
is a closed immersion. We 
want to show that $A$ contains $V$. Note that $A$ is not empty since it contains $U$. Let $
B$ be a subset of  $A$ which is totally ordered for the inclusion. Let $R$ be the ring of functions of $H^{1}( \partial U, \St_{\mathcal{M}}
^{<D})$. For $U^{\prime}\in B$, let $\mathcal{I}_{U^{\prime}}$ be the ideal of functions of $H^{1}( \partial 
U^{\prime}, \St_{\mathcal{M}}^{<D})$ in $H^{1}( \partial U, \St_{\mathcal{M}}^{<D})$. By assumption on 
$B$, the family of ideals $(\mathcal{I}_{U^{\prime}})_{U^{\prime}\in B}$ is totally ordered for the 
inclusion. Hence, $\mathcal{I}:=\bigcup_{U^{\prime}\in \mathcal{I}_{U^{\prime}}}$ is an ideal in $R$. Since $R$ is 
noetherian, there exists $U^{\prime}_0\in B$ such that $\mathcal{I}=\mathcal{I}_{U^{\prime}_0}$. In particular, $\mathcal{I}_{U^{\prime}}=\mathcal{I}_{U^{\prime}_0}$ for every $U^{\prime}\in B$ containing $U^{\prime}_0$. Set $V^{\prime}:=\bigcup_{U^{\prime}\in B}U^{\prime}$. From lemma \ref{faisceau}, we deduce 
\begin{align*}
H^{1}( \partial V^{\prime}, \St_{\mathcal{M}}^{<D})         &   \simeq  \lim_{U^{\prime}\in B} 
H^{1}( \partial U^{\prime}, \St_{\mathcal{M}}^{<D})   \\
 &\simeq   \lim_{U^{\prime}\in B, U^{\prime}_0\subset U^{\prime}} 
H^{1}( \partial U^{\prime}, \St_{\mathcal{M}}^{<D}) \\
                &    \simeq    H^{1}( \partial U^{\prime}_0, \St_{\mathcal{M}}^{<D}) 
\end{align*}
Thus, $V^{\prime}\in A$. From Zorn lemma, we deduce that  $A$ admits a maximal element $W$.
If $W$ is closed in $V$, then we have $W=V$ by connectedness of $V$. Suppose now that $W$ is not closed in $V$. Let $P\in \overline{W}\setminus W$ and let $B$ be a small ball in $V$ containing $P$ and such that $H^{1}( \partial 
B, \St_{\mathcal{M}}^{<D}) \longrightarrow  H^{1}(\partial P, \St_{\mathcal{M}}^{<D})$ is an isomorphism. Set $W^{\prime}:= W\cup B\subset V$. We are going to show that $W^{\prime}\in A$, which contradicts the fact that $W$ is maximal in $A$. From the factorization
\begin{equation}\label{factori}
\xymatrix{
H^{1}( \partial W^{\prime}, \St_{\mathcal{M}}^{<D})  \ar[r] &    H^{1}( \partial W, \St_{\mathcal{M}}^{<D})        \ar[r] &  
H^{1}(\partial U, \St_{\mathcal{M}}^{<D})   
}
\end{equation}
we are left to show that the first arrow in \eqref{factori} is a closed immersion. From lemma \ref{faisceau}, the following diagram
\begin{equation}\label{reducB}
\xymatrix{
H^{1}( \partial W^{\prime}, \St_{\mathcal{M}}^{<D}) \ar[r]  \ar[d] & 
H^{1}( \partial B, \St_{\mathcal{M}}^{<D})\ar[d]    \\
H^{1}( \partial W, \St_{\mathcal{M}}^{<D})  \ar[r]  &  
H^{1}( \partial (W\cap B), \St_{\mathcal{M}}^{<D})  
}
\end{equation}
is cartesian. Hence, it is enough to show that the right vertical arrow in \eqref{reducB} is a closed immersion. Let 
$(P_n)_{n\in \mathds{N}}$ be a sequence of points in $W$ converging to $P$. Since $W$ is open, the sequence  $(P_n)_{n\in \mathds{N}}$ can be supposed to lie in some $D_i^{\circ}$ for $i\in \llbracket 1, m\rrbracket$. Let $\Delta \subset B$ be a small enough neighbourhood of $P$ in $D_i$. Set $\Delta^{\ast}=\Delta\setminus \bigcup_{j\in I\setminus \{i\}} D_j\subset D_{i}^{\circ}$. From our choice for $i$, the open set $W\cap \Delta^{\ast}$ is not empty. We have the following commutative diagram
\begin{equation}\label{finaldiag}
\xymatrix{
H^{1}( \partial B, \St_{\mathcal{M}}^{<D}) \ar[r]  \ar[d] & 
H^{1}( \partial \Delta^{\ast}, \St_{\mathcal{M}}^{<D})\ar[d]    \\
H^{1}( \partial (W\cap B), \St_{\mathcal{M}}^{<D})   \ar[r]  &  
H^{1}( \partial (W\cap \Delta^{\ast}), \St_{\mathcal{M}}^{<D})  
}
\end{equation}
From proposition \ref{verygoodclosedimmersion}, the top horizontal arrow in \eqref{finaldiag} is a closed immersion.       From corollary \ref{deUaVfermee}, the right vertical arrow in \eqref{finaldiag} is a closed immersion. Hence, the left vertical arrow in \eqref{finaldiag} is a closed immersion. Hence, $W^{\prime}\in A$, which contradicts the fact that $W$ is maximal in $A$. Thus, $W=V\in A$, which finishes the proof of Theorem  \ref{cestuneimmersionfermee}.

\section{Stokes torsors and marked connections}
\subsection{Notations}\label{notmarkedconn} In this section, let $X$ be a smooth complex algebraic variety. Let $D$ be a normal crossing divisor in $X$. Let $\mathcal{M}$ be a good meromorphic flat bundle on $X$ with poles along $D$.
\subsection{Definition of marked connections and relation with Stokes torsors} \label{Standmarked}
Let us recall that a \textit{$\mathcal{M}$-marked connection} is the data of a couple $(M, \iso)$ where $M$ is a germ of meromorphic connection with poles along $D$ defined in a neighbourhood of $D$ in $X$, and where $\iso : M_{\hat{D}}\longrightarrow \mathcal{M}_{\hat{D}}$ is an isomorphism of formal connections. We denote by $\Isom_{\iso}(M, \mathcal{M})$  the $\St_{\mathcal{M}}^{<D}(\mathds{C})$-torsor of isomorphisms between $\partial M$ and $\partial \mathcal{M}$ which are  asymptotic to $\iso$ along $D$. The proof of the following statement was suggested to me by T. Mochizuki. I thank him for kindly sharing it. When $D$ is smooth, it was known to Malgrange \cite{Mal2MathPhy}. See also \cite[II 6.3]{Frob}.
\begin{lemme}\label{bijectionettangent}
The map associating to every isomorphism class of  $\mathcal{M}$-marked connection  $(M, \iso)$ the $\St_{\mathcal{M}}^{<D}(\mathds{C})$-torsor $\Isom_{\iso}(M, \mathcal{M})$ is bijective.
\end{lemme}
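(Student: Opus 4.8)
The plan is to construct an explicit inverse to the map $(M,\iso)\mapsto\Isom_{\iso}(M,\mathcal{M})$, the only genuinely analytic input being Mochizuki's asymptotic existence and development theorems (\cite[3.2.10]{Mochizuki1}, due to Malgrange--Sibuya when $D$ is smooth); everything else is torsor bookkeeping together with the identification, near $D$, of flat $\mathcal{A}$-linear horizontal maps on $\partial D$ with meromorphic connection maps on $X$. For \emph{well-definedness}: since $\iso$ identifies $M_{\hat{D}}$ with the good formal connection $\mathcal{M}_{\hat{D}}$, the germ $M$ has good formal structure along $D$, so by the asymptotic existence theorem every $\theta\in\partial D$ has a neighbourhood on which there is an isomorphism $\partial M\to\partial\mathcal{M}$ asymptotic to $\iso$ along $D$, and post-composition by $\St_{\mathcal{M}}^{<D}(\mathds{C})$ acts simply transitively on such local isomorphisms. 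Hence $\Isom_{\iso}(M,\mathcal{M})$ is indeed a $\St_{\mathcal{M}}^{<D}(\mathds{C})$-torsor on $\partial D$, and transport of structure shows that an isomorphism of $\mathcal{M}$-marked connections induces one between the associated torsors, so the map is well-defined on isomorphism classes.

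For \emph{injectivity}, let $\phi:\Isom_{\iso_1}(M_1,\mathcal{M})\to\Isom_{\iso_2}(M_2,\mathcal{M})$ be an isomorphism of $\St_{\mathcal{M}}^{<D}(\mathds{C})$-torsors. Working locally on $\partial D$, choose a section $\tau_1$ of the first torsor (an isomorphism $\partial M_1\to\partial\mathcal{M}$ asymptotic to $\iso_1$); then $u:=\phi(\tau_1)^{-1}\circ\tau_1:\partial M_1\to\partial M_2$ is a horizontal $\mathcal{A}$-linear isomorphism, and $\St_{\mathcal{M}}^{<D}(\mathds{C})$-equivariance of $\phi$ makes $u$ independent of the choice of $\tau_1$. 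The local $u$'s therefore glue to a global section $u\in\Gamma(\partial D,\mathcal{H}^0\DR\partial\mathcal{H}om(M_1,M_2))$ whose formalisation along $D$ is $\iso_2^{-1}\circ\iso_1$. Applying $p_{D\ast}$ and using \cite[p44]{Sabbahdim} to identify $p_{D\ast}\mathcal{H}^0\DR\partial\mathcal{H}om(M_1,M_2)$ near $D$ with $\mathcal{H}om_{\mathcal{D}_X}(M_1,M_2)$ compatibly with formalisation, the section $u$ comes from a morphism $M_1\to M_2$ of meromorphic connections in a neighbourhood of $D$; it is an isomorphism since it is one on $X\setminus D$ and after formalisation along $D$, and it is compatible with the markings by construction. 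Thus $(M_1,\iso_1)\simeq(M_2,\iso_2)$.

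For \emph{surjectivity}, given a $\St_{\mathcal{M}}^{<D}(\mathds{C})$-torsor $\mathcal{T}$, pick a cover $(U_i)$ of $\partial D$ trivialising it, with Stokes cocycle $(g_{ij})$, $g_{ij}\in\Gamma(U_i\cap U_j,\St_{\mathcal{M}}^{<D}(\mathds{C}))$, viewed as flat $\mathcal{A}$-linear automorphisms of $\partial\mathcal{M}$ over the overlaps, asymptotic to $\Id$ along $D$. Twisting $\partial\mathcal{M}$ by $(g_{ij})$ yields a flat $\mathcal{A}$-module $\mathcal{E}$ on $\partial D$ with a canonical identification $\mathcal{E}_{\hat{D}}\simeq\partial\mathcal{M}_{\hat{D}}$, because $g_{ij}\equiv\Id$ modulo $\mathcal{A}^{<D}$. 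By Mochizuki's asymptotic development theorem, $\mathcal{E}\simeq\partial M$ for a germ of meromorphic connection $M$ with poles along $D$, and the above identification provides a formal isomorphism $\iso:M_{\hat{D}}\to\mathcal{M}_{\hat{D}}$. Unwinding the construction, the chosen trivialisations of $\mathcal{E}$ produce trivialisations of $\Isom_{\iso}(M,\mathcal{M})$ over the $U_i$ whose transition cocycle is exactly $(g_{ij})$, so $\Isom_{\iso}(M,\mathcal{M})\simeq\mathcal{T}$ and the map is surjective.

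The main obstacle is the descent step in the surjectivity part — equivalently, essential surjectivity of the real-blow-up functor: producing an honest meromorphic connection on $X$ from an abstract asymptotic datum on $\partial D$ is exactly the content of the Hukuhara--Turrittin--Sibuya asymptotic existence theorem and its normal-crossing generalisation by Mochizuki, and one must keep track of the fact that the good formal structure is preserved throughout. Everything else reduces to torsor manipulations and the comparison isomorphism for $p_{D\ast}\mathcal{H}^0\DR$; one may also invoke Lemma \ref{nontrivialauto} to ensure that the isomorphisms produced along the way are unique, hence canonical.
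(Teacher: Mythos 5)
Your proof is correct and follows essentially the same route as the paper: the inverse is constructed by gluing local copies of $\mathcal{M}$ along a cocycle for the torsor and descending the result to an honest meromorphic connection via Mochizuki's irregular Riemann--Hilbert correspondence, the formal marking existing globally precisely because the cocycle is asymptotic to the identity along $D$. The only differences are cosmetic — you twist the sheaf $\partial\mathcal{M}$ directly rather than the associated Stokes-filtered local system, and you spell out the well-definedness and injectivity checks the paper calls standard; note that the descent step you flag is supplied by the Riemann--Hilbert correspondence \cite[4.11]{MochStokes} rather than by the asymptotic development theorem per se.
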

\begin{proof}
Let us construct an inverse. Take $\mathcal{T}\in \St_{\mathcal{M}}^{<D}(\mathds{C})$ and let $g=(g_{ij})$ be a cocycle for $\mathcal{T}$ associated to a cover $(U_i)_{i\in I}$ of $\partial D$. 
Let $\mathcal{L}$ be the Stokes filtered local system on $\partial D$ associated to $\mathcal{M}$. Set $\mathcal{L}_i := \mathcal{L}_{| U_i}$. Then, $g$ allows to glue the 
$\mathcal{L}_{i}$ into a Stokes filtered local system $\mathcal{L}_{\mathcal{T}}$ on $\partial D$ independent of the choice of $g$. From the irregular Riemann-Hilbert correspondence \cite[4.11]{MochStokes}, $\mathcal{L}_{\mathcal{T}}$ is the Stokes filtered local system associated to a unique (up to isomorphism) good meromorphic connection $\mathcal{M}_{\mathcal{T}}$ defined in a neighbourhood of $D$ and with poles along $D$. By construction, the isomorphism $\mathcal{L}_{\mathcal{T}|U_i} \longrightarrow \mathcal{L}_{| U_i}$ corresponds to an isomorphism 
$\partial \mathcal{M}_{\mathcal{T}|U_i}\longrightarrow \partial\mathcal{M}_{| U_i} $. We thus obtain a formal isomorphism 
$\iso_i : \partial \mathcal{M}_{\mathcal{T},\hat{D} |U_i, }\longrightarrow \partial\mathcal{M}_{\hat{D}| U_i} $. On $U_{ij}$, the discrepancy between $\iso_i$  and $\iso_j$ is measured by the asymptotic of $g_{ij}$ along $D$. By definition, this asymptotic is $\Id$. Hence, the $\iso_i$ glue into a globally defined isomorphism $ \partial \mathcal{M}_{\mathcal{T},\hat{D}} \longrightarrow \partial \mathcal{M}_{\hat{D}}$. Applying $p_{D\ast}$ thus yields an isomorphism $\iso : \mathcal{M}_{\mathcal{T},\hat{D}} \longrightarrow  \mathcal{M}_{\hat{D}}$. It is then standard to  check that the map $\mathcal{T}\longrightarrow (\mathcal{M}_{\mathcal{T}}, \iso)$ is the sought-after inverse.
\end{proof}

\subsection{Proof of Theorem \ref{CDG}}
We are now in position to prove Theorem \ref{CDG}. Let $X$ be a germ of smooth algebraic variety around a point $0$. Let $D$ be a germ of divisor passing through $0$. Let $\mathcal{M}$ be a germ of meromorphic connection at $0$ with poles along $D$. Let $C$ be a smooth curve passing through $0$ and not contained in any of the irreducible components of $D$. Let  $(M_1, \iso_1)$  and $(M_2, \iso_2)$  be $\mathcal{M}$-marked connections such that   
$$(M_1, \iso_1)_{|C}\simeq (M_2, \iso_2)_{|C}$$
We want to show that $(M_1, \iso_1)$  and $(M_2, \iso_2)$  are isomorphic in a neighbourhood of $0$. Let $\pi : Y\longrightarrow X$ be a resolution of  turning points for  $\mathcal{M}$ around $0$. Such a resolution exists by works of Kedlaya \cite{Kedlaya2}  and  Mochizuki \cite{Mochizuki1}. Set $E:=\pi^{-1}(D)$. At the cost of blowing up further, we can suppose that the strict transform $C^{\prime}$ of $C$ is transverse to $E$ at a point $P$ in the smooth locus of $E$. Note that $E$ is connected. From lemma \ref{bijectionettangent}, the $\pi^{+}\mathcal{M}$-marked connections $(\pi^{+}M_1, \pi^{+}\iso_1)$ and $(\pi^{+}M_2, \pi^{+}\iso_2)$ define two $\mathds{C}$-points of  $H^{1}(\partial E, \St_{\pi^{+}\mathcal{M}}^{<E})$. Since 
$$(\pi_{+}\pi^{+}M_i, \pi_{+}\pi^{+}\iso_i)\simeq (M_i, \iso_i)$$
for $i=1,2$, it is enough to show $(\pi^{+}M_1, \pi^{+}\iso_1)\simeq (\pi^{+}M_2, \pi^{+}\iso_2)$. By assumption, 
\begin{align*}
(\pi^{+}M_1, \pi^{+}\iso_1)_{|C^{\prime}} & \simeq    (M_1, \iso_1)_{|C}  \\
                                                       & \simeq (M_2, \iso_2)_{|C}    \\
                                                       & \simeq (\pi^{+}M_2, \pi^{+}\iso_2)_{|C^{\prime}}
\end{align*}
In particular, the image of $(\pi^{+}M_1, \pi^{+}\iso_1)$ and $(\pi^{+}M_2, \pi^{+}\iso_2)$  by the restriction map 
\begin{equation}\label{preuveth3}
\xymatrix{
H^{1}( \partial E, \St_{\pi^{+}\mathcal{M}}^{<E})            \ar[r] &  
H^{1}(\partial P, \St_{\pi^{+}\mathcal{M}}^{<E})   
}
\end{equation}
are the same. From Theorem \ref{cestuneimmersionfermee}, the map \eqref{preuveth3} is a closed immersion. Hence, $(\pi^{+}M_1, \pi^{+}\iso_1)\simeq (\pi^{+}M_2, \pi^{+}\iso_2)$, which concludes the proof of Theorem \ref{CDG}.

\subsection{Obstruction theory and tangent space}
We use the notations from \ref{notmarkedconn}. Let us compute the obstruction theory of $H^{1}(\partial D, \St_{\mathcal{M}}^{<D})$ at a point $\mathcal{T}_0\in H^{1}(\partial D, \St_{\mathcal{N}}^{<D}(\mathds{C}))$. We fix a morphism of infinitesimal extensions of $\mathds{C}$-algebras  
$$
R^{\prime}\longrightarrow R\longrightarrow \mathds{C}, \hspace{20pt} I:=\Ker R^{\prime}\longrightarrow R
$$ 
such that $I$ is annihilated by $\Ker R^{\prime}\longrightarrow \mathds{C}$. In particular, $I^2=0$ and $I$ is endowed with a structure of $\mathds{C}$-vector space, which we suppose to be finite dimensional. Let $\mathcal{T}\in H^{1}(\partial D, \St_{\mathcal{M}}^{<D}(R))$ lifting $\mathcal{T}_0$.  Choose a cover $\mathcal{U}=(U_i)_{i\in K}$ of $\partial D$ such that 
$\mathcal{T}$ comes from a cocycle $g=(g_{ij})_{i,j\in K}$. Set $L_i(R):=\Lie \St_{\mathcal{M}}^{<D}(R)_{|U_i}$. The identifications   
\begin{eqnarray*}
L_i(R)_{|U_{ij}}    &     \overset{\sim}{\longrightarrow}  & L_j(R)_{|U_{ij}}  \\
   M   &            \longrightarrow  & g_{ij}^{-1}M g_{ij}
\end{eqnarray*}
allow to glue the $L_i(R)$ into a sheaf of $R$-Lie algebras over $\partial D$ denoted by $\Lie \St_{\mathcal{M}}^{<D}(R)^{\mathcal{T}}$ and depending only on $\mathcal{T}$ and not on $g$. For $t=(t_{ijk})\in \check{C}^{2}(\mathcal{U}, \Lie \St_{\mathcal{M}}^{<D}(R)^{\mathcal{T}})$, we denote by $s_{ijk}$ the unique representative of $t_{ijk}$ in $\Gamma(U_{ijk}, L_i(R))$. Then
\begin{align*}
(dt)_{ijkl}&=t_{jkl}-t_{ikl}+t_{ijl}-t_{ijk}\\            
&=[g_{ij}s_{jkl}g_{ij}^{-1}-s_{ikl}+s_{ijl}-s_{ijk}]
\end{align*}
We have the following
\begin{lemme}\label{obs}
There exists
$$
\ob(\mathcal{T})\in I\otimes_{\mathds{C}}\check{H}^{2}(\partial D, \Lie \St_{\mathcal{M}}^{<D}(\mathds{C})^{\mathcal{T}_0})
$$
such that $\ob(\mathcal{T})=0$ if and only if $\mathcal{T}$ lifts to $H^{1}(\partial D, \St_{\mathcal{M}}^{<D}(R^{\prime}))$.
\end{lemme}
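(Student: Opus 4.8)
The plan is to carry out the standard obstruction-theoretic argument for lifting a torsor along an infinitesimal extension, adapted to the non-abelian sheaf of groups $\St_{\mathcal{M}}^{<D}$ and its cocycle description. First I would fix the cover $\mathcal{U}=(U_i)_{i\in K}$ and the cocycle $g=(g_{ij})$ representing $\mathcal{T}\in H^1(\partial D,\St_{\mathcal{M}}^{<D}(R))$, and attempt to lift each $g_{ij}$ to a section $\tilde{g}_{ij}\in\Gamma(U_{ij},\St_{\mathcal{M}}^{<D}(R'))$. Such lifts exist locally because $\St_{\mathcal{M}}^{<D}$ is a sheaf of \emph{smooth} (unipotent) algebraic groups, so its $R'$-points surject onto its $R$-points; at the cost of refining $\mathcal{U}$ we may assume each $\tilde{g}_{ij}$ exists. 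The obstruction to $\mathcal{T}$ lifting is then the failure of the $\tilde{g}_{ij}$ to satisfy the cocycle identity on triple overlaps.

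Next I would define, for each triple $(i,j,k)$, the element
$$
c_{ijk}:=\tilde{g}_{ij}\,\tilde{g}_{jk}\,\tilde{g}_{ik}^{-1}\in\Gamma(U_{ijk},\St_{\mathcal{M}}^{<D}(R')).
$$
Since the $g_{ij}$ form a genuine cocycle over $R$, the image of $c_{ijk}$ in $\St_{\mathcal{M}}^{<D}(R)$ is the identity, so $c_{ijk}=\Id+s_{ijk}$ with $s_{ijk}\in I\otimes_{\mathds{C}}\Gamma(U_{ijk},L_i(\mathds{C}))$, using $I^2=0$ and the fact that $I$ is a $\mathds{C}$-vector space; here I identify $\Ker(\St_{\mathcal{M}}^{<D}(R')\to\St_{\mathcal{M}}^{<D}(R))$ with $I\otimes_{\mathds{C}}\Lie\St_{\mathcal{M}}^{<D}(\mathds{C})$, exactly as in the excerpt's discussion preceding the lemma. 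A direct computation — exploiting $I^2=0$ so that conjugation by $\tilde{g}_{ij}$ reduces to conjugation by $g_{ij}$ and hence matches the gluing data defining $\Lie\St_{\mathcal{M}}^{<D}(\mathds{C})^{\mathcal{T}_0}$ — shows that $(s_{ijk})$ is a $2$-cocycle in $\check{C}^2(\mathcal{U},I\otimes_{\mathds{C}}\Lie\St_{\mathcal{M}}^{<D}(\mathds{C})^{\mathcal{T}_0})$, using the formula for $(dt)_{ijkl}$ recorded just before the lemma statement. I then set $\ob(\mathcal{T})$ to be its class in $I\otimes_{\mathds{C}}\check{H}^2(\partial D,\Lie\St_{\mathcal{M}}^{<D}(\mathds{C})^{\mathcal{T}_0})$, and check independence of the choices of lifts $\tilde{g}_{ij}$: changing $\tilde{g}_{ij}$ to $\tilde{g}_{ij}(\Id+\lambda_{ij})$ with $\lambda_{ij}\in I\otimes L_i$ changes $(s_{ijk})$ by the Čech coboundary $d(\lambda_{ij})$, again by an $I^2=0$ computation. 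One also checks compatibility with refinement of $\mathcal{U}$, so that $\ob(\mathcal{T})$ is well-defined in the colimit $\check{H}^2(\partial D,-)$.

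Finally I would prove the equivalence: if $\mathcal{T}$ lifts to a torsor over $R'$, then after refining $\mathcal{U}$ it is represented by an honest $R'$-cocycle $\tilde{g}$, for which $c_{ijk}=\Id$ and $\ob(\mathcal{T})=0$; conversely, if $\ob(\mathcal{T})=0$ then after a refinement there exist $\lambda_{ij}$ with $d(\lambda_{ij})=(s_{ijk})$, and the corrected lifts $\tilde{g}_{ij}(\Id+\lambda_{ij})$ form a genuine $R'$-cocycle lifting $\mathcal{T}$. The main obstacle I anticipate is bookkeeping the non-abelianity correctly: all the cocycle and coboundary computations must be performed in the non-commutative groups $\St_{\mathcal{M}}^{<D}(R')$, and only collapse to the linear statements about $\Lie\St_{\mathcal{M}}^{<D}(\mathds{C})^{\mathcal{T}_0}$ because $I^2=0$ kills the higher commutator terms; being careful that the twisted gluing of the $L_i(\mathds{C})$ by $g$ (reducing mod $\mathfrak{m}_R$, equivalently by $\mathcal{T}_0$) is precisely what makes $(s_{ijk})$ a cocycle with values in the twisted sheaf $\Lie\St_{\mathcal{M}}^{<D}(\mathds{C})^{\mathcal{T}_0}$ rather than an untwisted one. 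A minor additional point is ensuring the refinements involved are harmless, which follows from $\check{H}^2(\partial D,-)$ being computed as a colimit over covers since $\partial D$ is paracompact.
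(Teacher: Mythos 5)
Your proposal is correct and follows essentially the same route as the paper: lift the cocycle $g_{ij}$ arbitrarily to $R'$, measure the failure of the cocycle condition on triple overlaps by $s_{ijk}$, use $I^2=0$ to see that $[s_{ijk}]$ is a Čech $2$-cocycle valued in the twisted sheaf $\Lie\St_{\mathcal{M}}^{<D}(\mathds{C})^{\mathcal{T}_0}$, and check independence of choices and the equivalence with liftability. The paper's proof is the same computation (with the normalization $h_{ii}=\Id$, $h_{ij}h_{ji}=\Id$), and it likewise leaves the final verification as standard.
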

\begin{proof}
For every $i,j \in K$, let $h_{ij}\in \Gamma(U_{ij}, \St_{\mathcal{M}}^{<D}(R^{\prime}))$ be an arbitrary lift of $g_{ij}$ to $R^{\prime}$. We can always choose the $h_{ij}$ to satisfy $h_{ii}=\Id$ and $h_{ij}h_{ji}=\Id$.
Since $\Lie \St_{\mathcal{M}}^{<D}(R^{\prime})$ is locally free, 
$$
I \cdot \Lie \St_{\mathcal{M}}^{<D}(R^{\prime})\simeq I \otimes_{R^{\prime}} \Lie \St_{\mathcal{M}}^{<D}(R^{\prime})\simeq I \otimes_{\mathds{C}} \Lie \St_{\mathcal{M}}^{<D}(\mathds{C})
$$
We will use both descriptions without mention. We set 
$$
s_{ijk}:=h_{ij}h_{jk} h_{ki}-\Id \in \Gamma(U_{ijk}, I \cdot \Lie \St_{\mathcal{M}}^{<D}(R^{\prime}))
$$
We see $s_{ijk}$ as a section of $I \otimes_{\mathds{C}} L_i(\mathds{C})$ over $U_{ijk}$ and denote by $[s_{ijk}]$ its class in $I \otimes_{\mathds{C}}  \Lie \St_{\mathcal{M}}^{<D}(\mathds{C})^{\mathcal{T}_0}$.
We want to prove that the $[s_{ijk}]$ define a cocycle. As seen above, this amounts to prove the following equality in $\Gamma(U_{ijk}, I\otimes_{\mathds{C}}\Lie \St_{\mathcal{M}}^{<D}(\mathds{C}))$
\begin{equation}\label{equa}
g_{ij}(0)s_{jkl}g_{ij}^{-1}(0)-s_{ikl}+s_{ijl}-s_{ijk} =0
\end{equation} 
Where $g_{ij}(0)$ is the image of $g_{ij}$ by $R\longrightarrow \mathds{C}$.
We have
\begin{align*}
 g_{ij}(0)s_{jkl}g_{ij}^{-1}(0) & = h_{ij} h_{jk} h_{kl} h_{lj} h_{ji} -\Id\\
       & =  (h_{ij} h_{jk} - h_{ik}+ h_{ik})h_{kl} h_{lj} h_{ji} -\Id\\
       & = (h_{ij} h_{jk} - h_{ik})g_{kl}(0) g_{lj}(0) g_{ji}(0) + h_{ik}h_{kl} h_{lj} h_{ji} -\Id\\
       & = (h_{ij} h_{jk} - h_{ik})g_{ki}(0) + h_{ik}h_{kl} h_{lj} h_{ji}-\Id\\
        & =(h_{ij} h_{jk} - h_{ik})h_{ki} + h_{ik}h_{kl} h_{lj} h_{ji}-\Id\\
          & =h_{ij} h_{jk}h_{ki}  + h_{ik}h_{kl} h_{lj} h_{ji}-2\Id
\end{align*}
We now see how the second term of the last line above interacts with the second term of the left-hand side of \eqref{equa}.
\begin{align*}
 h_{ik}h_{kl} h_{lj} h_{ji}-s_{ikl} & =  h_{ik}h_{kl} h_{lj} h_{ji}-h_{ik} h_{kl} h_{li}+\Id\\
  & =  h_{ik}h_{kl}( h_{lj} h_{ji}-h_{li})+\Id\\
  &= g_{ik}(0)g_{kl}(0)( h_{lj} h_{ji}-h_{li})+\Id \\
  & =g_{il}(0)( h_{lj} h_{ji}-h_{li})+\Id \\
  & =h_{il} h_{lj} h_{ji}
\end{align*}
Hence, 
\begin{align*}
g_{ij}(0)s_{jkl}g_{ij}^{-1}(0)-s_{ikl}+s_{ijl}-s_{ijk}  & = h_{il} h_{lj} h_{ji}+h_{ij} h_{jl} h_{li}  -2\Id \\
&= (h_{ij} h_{jl} h_{li} )^{-1}+h_{ij} h_{jl} h_{li}  -2\Id \\
  &=  (h_{ij} h_{jl} h_{li} )^{-1}((h_{ij} h_{jl} h_{li} )^{2}-2h_{ij} h_{jl} h_{li} +\Id) \\   
  & = (h_{ij} h_{jl} h_{li} )^{-1}s^2_{ijl}\\
  & =0
\end{align*}
where the last equality comes from $I^{2}=0$. Hence, the $[s_{ijk}]$ define a cocycle of $I\otimes_{\mathds{C}}\Lie \St_{\mathcal{M}}^{<D}(\mathds{C})^{\mathcal{T}_0}$. An other choice of lift gives rise to  homologous cocycles. We denote by $\ob(\mathcal{T})$ the class of $([s_{ijk}])_{ijk}$ in 
$\check{H}^{2}(\partial D , I\otimes_{\mathds{C}}\Lie \St_{\mathcal{M}}^{<D}(\mathds{C})^{\mathcal{T}_0})$.
It is standard to check that $\ob(\mathcal{T})$ has the sought-after property.
\end{proof}

\begin{corollaire}\label{obetIrr}
Let $(M, \iso)$ be a $\mathcal{M}$-marked connection. Then, the space \newline  $H^{2}(D , \Irr^{\ast}_D\End M))$ is
an obstruction theory for $H^{1}(\partial D, \St_{\mathcal{M}}^{<D})$ at $\Isom_{\iso}(M, \mathcal{M})$.
\end{corollaire}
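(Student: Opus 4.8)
The plan is to feed Lemma~\ref{obs} into the dictionary between Stokes torsors and marked connections provided by Lemma~\ref{bijectionettangent}. Write $\mathcal{T}_0 := \Isom_{\iso}(M,\mathcal{M})$, which is precisely the $\mathds{C}$-point of $H^{1}(\partial D, \St_{\mathcal{M}}^{<D})$ attached to $(M,\iso)$. For every infinitesimal extension $R'\to R\to\mathds{C}$ with square-zero finite-dimensional kernel $I$ and every lift $\mathcal{T}$ of $\mathcal{T}_0$ to $R$, Lemma~\ref{obs} already produces a class
$$
\ob(\mathcal{T})\in I\otimes_{\mathds{C}}\check{H}^{2}(\partial D, \Lie\St_{\mathcal{M}}^{<D}(\mathds{C})^{\mathcal{T}_0})
$$
which vanishes if and only if $\mathcal{T}$ lifts to $R'$. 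So all that remains is to construct a natural isomorphism $\check{H}^{2}(\partial D, \Lie\St_{\mathcal{M}}^{<D}(\mathds{C})^{\mathcal{T}_0})\simeq H^{2}(D,\Irr^{\ast}_D\End M)$ and transport the obstruction formalism along it.

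First I would untwist the coefficient sheaf. Choose a cover $(U_i)$ of $\partial D$ together with local trivialisations $\psi_i$ of the torsor $\mathcal{T}_0$, that is, isomorphisms $\partial M_{|U_i}\longrightarrow\partial\mathcal{M}_{|U_i}$ asymptotic to $\iso$. Conjugation by $\psi_i$ carries $\Lie\St_{\mathcal{M}}^{<D}(\mathds{C})_{|U_i}$ onto $\Lie\St_{M}^{<D}(\mathds{C})_{|U_i}$ (flatness is preserved since the $\psi_i$ are flat, and rapid decay is preserved since the $\psi_i$ and their inverses have moderate growth), and these local isomorphisms are compatible with exactly the gluing data $g_{ij}=\psi_j^{-1}\psi_i$ used to define $\Lie\St_{\mathcal{M}}^{<D}(\mathds{C})^{\mathcal{T}_0}$. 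This yields a canonical isomorphism of sheaves of $\mathds{C}$-Lie algebras on $\partial D$
$$
\Lie\St_{\mathcal{M}}^{<D}(\mathds{C})^{\mathcal{T}_0}\simeq\Lie\St_{M}^{<D}(\mathds{C})=\mathcal{H}^{0}\DR^{<D}\End M,
$$
the last equality being the very definition of the Stokes sheaf of $M$ along $D$: a section of $\St_M^{<D}$ is a flat section of $\partial\End M$ of the shape $\Id+f$ with $f$ having coefficients in $\mathcal{A}^{<D}$, hence its Lie algebra is the sheaf of flat sections of $\partial\End M$ with coefficients in $\mathcal{A}^{<D}$.

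Next I would compute $\check{H}^{2}(\partial D,\mathcal{H}^{0}\DR^{<D}\End M)$. Since $\partial D$ is paracompact, this agrees, after passing to a fine enough cover, with the sheaf cohomology $H^{2}(\partial D,\mathcal{H}^{0}\DR^{<D}\End M)$. The key point is that $\DR^{<D}\End M$ is concentrated in degree $0$: locally on $\partial D$ the good asymptotic decomposition of $M$ (Majima, Sabbah) splits $\DR^{<D}\End M$ into a finite direct sum of complexes of the form $\DR^{<D}(\mathcal{E}^{a-b}\otimes\mathcal{R})$ with $\mathcal{R}$ regular, and a direct rank-one check shows that each such complex is acyclic outside degree $0$ (the relevant connection operators are surjective on $\mathcal{A}^{<D}$, one integrates from the divisor and rapid decay is preserved). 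Hence the hypercohomology spectral sequence collapses and $H^{2}(\partial D,\mathcal{H}^{0}\DR^{<D}\End M)\simeq\mathbb{H}^{2}(\partial D,\DR^{<D}\End M)$. Finally I would push forward along $p_D$ and invoke the comparison between the rapid-decay de Rham complex $Rp_{D\ast}\DR^{<D}(-)$ and the irregularity complex, together with the self-duality $\End M\simeq(\End M)^{\vee}$ which exchanges $\Sol$ and $\DR$, to identify $\mathbb{H}^{2}(\partial D,\DR^{<D}\End M)$ with $\mathbb{H}^{2}(D,\Irr^{\ast}_D\End M)=H^{2}(D,\Irr^{\ast}_D\End M)$. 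Combining this chain of isomorphisms with Lemma~\ref{obs} gives the asserted obstruction theory.

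I expect the last step to be the main obstacle: matching $Rp_{D\ast}\DR^{<D}\End M$ with $\Irr^{\ast}_D\End M$ with the correct shift and normalisation, and correctly inserting the self-duality of $\End M$, since $\Irr^{\ast}_D$ is defined transcendentally out of $\Sol$ whereas $\DR^{<D}$ is the rapid-decay de Rham complex. The degree-$0$ concentration of $\DR^{<D}\End M$ in the previous step is a close second, as it is precisely what legitimises passing from the \v{C}ech $H^{2}$ of a single sheaf in Lemma~\ref{obs} to the hypercohomology of the full complex, and hence to the irregularity complex.
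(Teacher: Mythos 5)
Your proof is correct and follows essentially the same route as the paper: untwist $\Lie \St_{\mathcal{M}}^{<D}(\mathds{C})^{\mathcal{T}_0}$ into $\mathcal{H}^0\DR^{<D}\End M$, use the degree-$0$ concentration of $\DR^{<D}\End M$ to pass to hypercohomology, identify the result with $H^{2}(D,\Irr^{\ast}_D\End M)$, and conclude by Lemma~\ref{obs}. The only difference is that you sketch arguments for the three identifications where the paper simply cites \cite[5.2]{TeySke}, \cite[Prop.~1]{HienInv} and \cite[2.2]{SabRemar}, and your sketches match the content of those references.
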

\begin{proof}
 Set $\mathcal{T}:= \Isom_{\iso}(M, \mathcal{M})$. As observed in \cite[5.2]{TeySke}, the canonical identification 
$$
\xymatrix{
\mathcal{H}^0 \DR^{<D}\End \mathcal{M}\ar[r]^-{\sim} & \Lie \St_{\mathcal{M}}^{<D}(\mathds{C})^{\mathcal{T}}
}
$$ 
induces 
\begin{align*}
\check{H}^{i}(\partial D,  \Lie \St_{\mathcal{M}}^{<D}(\mathds{C})^{\mathcal{T}})&\simeq 
H^{i}(\partial D,  \Lie \St_{\mathcal{M}}^{<D}(\mathds{C})^{\mathcal{T}})\\
&\simeq  H^i( \partial D, \mathcal{H}^0 \DR^{<D}\End M)  \\
 & \simeq 
      H^i(\partial D, \DR^{<D}\End M)  \\
      & \simeq H^{i}(D , \Irr^{\ast}_D\End M)
\end{align*}
The second identification comes from the fact \cite[Prop. 1]{HienInv} that $\DR^{<D}\End \mathcal{M}$ is concentrated in degree $0$. The third identification comes from \cite[2.2]{SabRemar}. Then, corollary \ref{obetIrr} follows from lemma \ref{obs}.
\end{proof}

Reasoning exactly as in \cite[5.2.1]{TeySke}, we prove the following 

\begin{lemme}\label{espaceTan}
For every $\mathcal{M}$-marked connection $(M, \iso)$, the tangent space of $H^{1}(\partial D, \St_{\mathcal{M}}^{<D})$ at $(M, \iso)$ identifies canonically to $H^1(D,  \Irr_D^{\ast} \End M)$.
\end{lemme}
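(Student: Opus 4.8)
The plan is to mirror the computation of the obstruction theory carried out in Lemma~\ref{obs} and its Corollary~\ref{obetIrr}, but one cohomological degree lower, following the blueprint of \cite[5.2.1]{TeySke}. The tangent space of the functor $H^{1}(\partial D, \St_{\mathcal{M}}^{<D})$ at the point $\mathcal{T}_0:=\Isom_{\iso}(M,\mathcal{M})$ is by definition the fibre over $\mathcal{T}_0$ of the map $H^{1}(\partial D, \St_{\mathcal{M}}^{<D}(\mathds{C}[\epsilon]/\epsilon^2))\longrightarrow H^{1}(\partial D, \St_{\mathcal{M}}^{<D}(\mathds{C}))$. So first I would take a cover $\mathcal{U}=(U_i)_{i\in K}$ of $\partial D$ on which $\mathcal{T}_0$ is represented by a cocycle $g=(g_{ij})$, and describe a lift $\mathcal{T}$ to $\mathds{C}[\epsilon]/\epsilon^2$ as a cocycle $h_{ij}=g_{ij}(\Id+\epsilon\, u_{ij})$ with $u_{ij}\in\Gamma(U_{ij}, L_i(\mathds{C}))$; here $L_i(\mathds{C})=\Lie\St_{\mathcal{M}}^{<D}(\mathds{C})_{|U_i}$, using the local freeness identification $I\cdot\Lie\St_{\mathcal{M}}^{<D}(R')\simeq I\otimes_{\mathds{C}}\Lie\St_{\mathcal{M}}^{<D}(\mathds{C})$ exactly as in the proof of Lemma~\ref{obs}.

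The next step is the bookkeeping: expanding the cocycle condition $h_{ij}h_{jk}=h_{ik}$ modulo $\epsilon^2$ shows that the collection $(u_{ij})$, viewed in the twisted sheaf $\Lie\St_{\mathcal{M}}^{<D}(\mathds{C})^{\mathcal{T}_0}$ obtained by gluing the $L_i(\mathds{C})$ via conjugation by $g$, is a $1$-cocycle in $\check{C}^{1}(\mathcal{U}, \Lie\St_{\mathcal{M}}^{<D}(\mathds{C})^{\mathcal{T}_0})$; and two lifts $\mathcal{T}$, $\mathcal{T}'$ of $\mathcal{T}_0$ are isomorphic as deformations precisely when the corresponding cocycles differ by a coboundary. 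This is the same style of short computation as the one displayed after Lemma~\ref{obs}, and it yields a canonical bijection between the tangent space and $\check{H}^{1}(\partial D, \Lie\St_{\mathcal{M}}^{<D}(\mathds{C})^{\mathcal{T}_0})$. One should also check this bijection is $\mathds{C}$-linear, which is immediate from the additivity of the construction $u\mapsto(\Id+\epsilon u)$.

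Finally I would transport this $\check{H}^1$ to the irregularity complex by the exact same chain of four identifications used in Corollary~\ref{obetIrr}: the identification $\mathcal{H}^0\DR^{<D}\End\mathcal{M}\xrightarrow{\sim}\Lie\St_{\mathcal{M}}^{<D}(\mathds{C})^{\mathcal{T}_0}$ from \cite[5.2]{TeySke} (using $\mathcal{T}_0=\Isom_{\iso}(M,\mathcal{M})$ to compare $\End\mathcal{M}$ and $\End M$), the coincidence of \v{C}ech and sheaf cohomology, the concentration of $\DR^{<D}\End\mathcal{M}$ in degree $0$ from \cite[Prop.~1]{HienInv}, and the comparison $H^i(\partial D,\DR^{<D}\End M)\simeq H^i(D,\Irr_D^{\ast}\End M)$ from \cite[2.2]{SabRemar}. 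Specialising to $i=1$ gives $H^1(D,\Irr_D^{\ast}\End M)$, as claimed. The only genuinely delicate point is the first step — writing down the deformation-to-cocycle dictionary cleanly and verifying that, in the non-abelian setting, ``isomorphism class of first-order deformation of the torsor $\mathcal{T}_0$'' really is classified by $\check{H}^1$ of the \emph{twisted} Lie algebra sheaf $\Lie\St_{\mathcal{M}}^{<D}(\mathds{C})^{\mathcal{T}_0}$ and not of the untwisted one; but this is precisely the content of the reference to \cite[5.2.1]{TeySke}, so the argument reduces to transcribing that computation in the present global setting.
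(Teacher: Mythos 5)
Your proposal is correct and follows essentially the same route as the paper, whose proof consists of invoking the computation of \cite[5.2.1]{TeySke}: first-order deformations of the torsor $\Isom_{\iso}(M,\mathcal{M})$ over $\mathds{C}[\epsilon]/\epsilon^2$ are classified by $\check{H}^{1}$ of the twisted sheaf $\Lie\St_{\mathcal{M}}^{<D}(\mathds{C})^{\mathcal{T}_0}$, which is then identified with $H^1(D,\Irr_D^{\ast}\End M)$ by the same chain of isomorphisms used in corollary \ref{obetIrr}. No gap to report.
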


\section{Reduction of Theorem \ref{mainth}  to extending the formal model}

\subsection{Reduction to the dimension 2 case}

In this subsection, we reduce the proof of Theorem \ref{mainth} 
 to the dimension $2$ case. The main tool is André's goodness criterion \cite[3.4.3]{Andre} in terms of Newton polygons. This reduction does not seem superfluous. Of crucial importance for the sequel of the proof  will be indeed the fact that when $X$ is an algebraic surface and $D$ a smooth divisor in $X$, then for every point $0\in D$ and every meromorphic connection $\mathcal{M}$ on $X$ with poles along $D$, the formal model of $\mathcal{M}$ \textit{splits on a small enough punctured disc around $0$}. This fact 
is specific to dimension $2$, since it pertains to the property that turning points in dimension  $2$ are isolated.

\begin{lemme}\label{reductionDim2}
The converse inclusion in Theorem \ref{mainth} is true in any dimension if it is true in dimension $2$.
\end{lemme}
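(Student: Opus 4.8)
The plan is to reduce goodness at an arbitrary point of $D$ to goodness along the slices by generic $2$-planes through that point, and then invoke André's Newton polygon criterion to patch these slice-wise statements back together. Fix a point $0 \in D$ at which $(\Sol \mathcal{M})_{|D}$ and $(\Sol \End\mathcal{M})_{|D}$ are local systems, and suppose we already know the theorem in dimension $2$; we must show $0 \in \Good(\mathcal{M})$. The key point is that goodness is detected along curves and surfaces: by \cite[3.4.3]{Andre}, $\mathcal{M}$ has good formal decomposition at $0$ if and only if, for a suitable system of local coordinates $(x_1,\dots,x_n)$ adapted to $D = \{x_n = 0\}$, the generic Newton polygon of $\mathcal{M}$ along $D$ does not ``jump'' at $0$, a condition that can be checked on the restriction of $\mathcal{M}$ to a generic $2$-dimensional linear section $X' \ni 0$ transverse to $D$ within $X$ (so that $D' := D \cap X'$ is a smooth divisor in the surface $X'$ and $0 \in D'$).

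First I would set up the dévissage: choose local coordinates so that $D$ is a hyperplane, and let $\iota: X' \hookrightarrow X$ be the inclusion of a generic affine plane through $0$. One checks that for generic such $X'$, the restriction $\mathcal{M}' := \iota^{*}\mathcal{M}$ is again a meromorphic connection on $X'$ with poles along the smooth divisor $D'$, and that $\Good(\mathcal{M}') \cap \{0\}$ controls $\Good(\mathcal{M}) \cap \{0\}$ — concretely, by André's criterion the Newton polygon data of $\mathcal{M}$ at $0$ along $D$ agrees with that of $\mathcal{M}'$ at $0$ along $D'$ for generic $X'$, because the generic slopes and their specialisation behaviour are computed from the same formal data. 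Hence $0 \in \Good(\mathcal{M})$ as soon as $0 \in \Good(\mathcal{M}')$.

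Next I would transport the hypothesis on solution complexes to the slice. The restriction $(\Sol \mathcal{M})_{|D}$ being a local system near $0$ means, via the compatibility of solution complexes with non-characteristic restriction (Kashiwara–Schapira), that $(\Sol \mathcal{M}')_{|D'} \simeq \iota^{*}(\Sol \mathcal{M})_{|D}$ up to shift for generic $X'$, hence is also a local system near $0$ in $D'$; the same applies to $\End\mathcal{M}$ since $\End(\iota^{*}\mathcal{M}) = \iota^{*}\End\mathcal{M}$. Thus $\mathcal{M}'$ on the surface $X'$ satisfies the hypotheses of Theorem \ref{mainth} in dimension $2$, which we are assuming; therefore $0 \in \Good(\mathcal{M}')$, and by the previous paragraph $0 \in \Good(\mathcal{M})$. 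Since $0$ was an arbitrary point in the smooth locus of $(\Sol \mathcal{M})_{|D}$ and $(\Sol \End\mathcal{M})_{|D}$, this proves the converse inclusion in dimension $n$.

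The main obstacle I anticipate is verifying that ``generic $2$-plane section'' is simultaneously non-characteristic for the $\mathcal{D}$-module $\mathcal{M}$ (so that $\Sol$ commutes with restriction and perversity/local-system-ness is preserved) \emph{and} faithful for André's Newton polygon invariant (so that jumping of the polygon downstairs is equivalent to jumping on the slice). Both are ``generic transversality'' statements, but they must be arranged together: one uses that the characteristic variety of $\mathcal{M}$ and the turning locus of $\mathcal{M}$ are proper closed subsets, so a generic plane through $0$ avoids the bad directions of the former and meets the latter (if nonempty) in a way that reflects its codimension-one structure in $D$ faithfully. Making this precise — in particular controlling the interaction between the chosen coordinates adapting $D$ and the genericity of $X'$ — is where the real work of the lemma lies; everything else is a formal translation through André's criterion and the base-change compatibility of $\Sol$.
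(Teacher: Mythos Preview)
Your outline is the same strategy the paper uses: reduce goodness at $0$ to a transverse slice via non-characteristic restriction of $\Sol$, then invoke André's Newton polygon criterion (the reference you want is \cite[3.4.1]{Andre}, not 3.4.3). Two remarks on how the paper actually executes it, and where your version is incomplete.

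First, organization: the paper does not jump straight to a $2$-plane but restricts to a transverse \emph{hypersurface} $Z$ and argues by induction on the dimension. This is cosmetic---your direct descent to dimension $2$ would also work once the non-characteristic step is secured---but the inductive formulation keeps each restriction a codimension-one step, and the choice ``$Z$ such that $\mathcal{M}$ and $\End\mathcal{M}$ are generically good along $Z\cap D$'' (possible since $n>2$ and the turning locus has codimension $1$ in $D$ by \cite[3.4.3]{Andre}) guarantees that the generic Newton polygon along $Z\cap D$ agrees with the one along $D$.

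Second, and this is the substantive point: the step you flag as ``the main obstacle'' is indeed the heart of the proof, and your sketch of it is not correct as written. Saying that the characteristic variety is a proper closed subset and hence avoidable by a generic plane is not enough: you need non-characteristic at the \emph{specific} point $0$, and a priori the fiber $\mathrm{Char}(\mathcal{M})_0\subset T^*_0X$ could be large (for instance, if $T^*_{\{0\}}X$ were a component, no plane through $0$ would be non-characteristic). The paper resolves this by actually computing the characteristic cycle from the hypothesis: from the distinguished triangle
\[
j_{!}L \longrightarrow \Sol\mathcal{M} \longrightarrow i_*\Irr_D^*\mathcal{M}
\]
with $L$ a local system on $X\setminus D$ and $\Irr_D^*\mathcal{M}$ a local system on $D$ near $0$, one reads off that $\mathrm{CC}(\Sol\mathcal{M})$ is supported on $T^*_XX\cup T^*_DX$; by Kashiwara--Schapira \cite[11.3.3]{KS} the same holds for $\mathrm{CC}(\mathcal{M})$. \emph{Then} every smooth subvariety transverse to $D$ through $0$ is automatically non-characteristic, with no genericity needed in the conormal direction. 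This is the missing ingredient in your argument; once you have it, both the Kashiwara commutation $\Sol(i_Z^+\mathcal{M})\simeq i_Z^{-1}\Sol\mathcal{M}$ and the equality of Newton polygons at $0$ follow, and the rest of your outline goes through.
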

\begin{proof}
Take $n>2$. We argue recursively by supposing that Theorem \ref{mainth} holds in dimension strictly less than $n$ and we prove that Theorem \ref{mainth} holds in dimension $n$. Let $X$ be a smooth complex algebraic variety of dimension $n$. Let $D$ be a smooth divisor in $X$. Let $\mathcal{M}$ be an algebraic meromorphic connection on $X$ with poles along $D$. Let $0\in D$ and suppose that $\Irr_D^{\ast}\mathcal{M}$ and  $\Irr_D^{\ast}\End\mathcal{M} $ are local systems in a neighbourhood of $0$. If $j : X\setminus D\longrightarrow X$ and $i: D\longrightarrow X$ are the canonical inclusions, we have a distinguished triangle 
$$
\xymatrix{
j_{!}L   \ar[r] & \Sol \mathcal{M} \ar[r] & i_{\ast}\Irr_{D}^{\ast} \mathcal{M}
}
$$
where $L$ is a local system on the complement of $D$. Hence, the characteristic cycle of $\Sol \mathcal{M}$  is supported on the union of $T^{\ast}_X X$ with  $T^{\ast}_D X$. From a theorem of Kashiwara and Schapira \cite[11.3.3]{KS}, so does the characteristic cycle of $\mathcal{M}$. Hence, any smooth hypersurface transverse to $D$ and passing through $0$ is non characteristic with respect to $\mathcal{M}$ in a neighbourhood of $0$. Let us choose such a hypersurface $Z$ and let $i_Z : Z \longrightarrow X$ be the canonical inclusion. From \cite[3.4.3]{Andre}, the turning locus of $\mathcal{M}$ is a closed subset of $D$ which is either empty or purely of codimension $1$ in $D$. Since $n>2$, the hypersurface $Z$ can consequently be chosen such that $\mathcal{M}$ and $\End \mathcal{M}$  have good formal decomposition generically along $Z\cap D$. The connection $i_Z^{+}\mathcal{M}$ is a meromorphic connection with poles along $Z\cap D$. It satisfies the hypothesis of Theorem \ref{mainth} at the point $0$. Indeed
by Kashiwara's restriction theorem \cite{TheseKashiwara}, 
$$
\Irr_{Z\cap D}^{\ast}i_Z^{+}\mathcal{M}= (\Sol i_Z^{+}\mathcal{M})_{|Z\cap D} \simeq (\Sol \mathcal{M})_{|Z\cap D} 
$$
and similarly for $\End \mathcal{M}$. Hence, $\Irr_{Z\cap D}^{\ast}i_Z^{+}\mathcal{M}$   and $\Irr_{Z\cap D}^{\ast}\End i_Z^{+}\mathcal{M}$ are local systems in a neighbourhood of $0$ in $Z\cap D$. By recursion hypothesis, $i_Z^{+}\mathcal{M}$ is good at $0$. In particular, the Newton polygon of $i_Z^{+}\mathcal{M}$ at $0$ (which is also the Newton polygon of $\mathcal{M}$ at $0$) is the generic Newton polygon of 
$i_Z^{+}\mathcal{M}$ along $Z\cap D$. From our choice for $Z$, 
the generic Newton polygon of 
$i_Z^{+}\mathcal{M}$ along $Z\cap D$ is the generic Newton polygon of $\mathcal{M}$ along $D$. Hence, the Newton polygon of $\mathcal{M}$ at $0$ is the generic Newton polygon of $\mathcal{M}$ along $D$, and similarly with  $\End\mathcal{M}$. By a theorem of André 
\cite[3.4.1]{Andre}, we deduce that $\mathcal{M}$  has good formal decomposition at $0$, which proves lemma \ref{reductionDim2}.

\end{proof}

\subsection{Setup and recollections}\label{extformalmodel}
From now on, we restrict the situation to dimension $2$. We use coordinates $(x,y)$ on $\mathds{A}^2_{\mathds{C}}$ and set $D_x:=\{y=0\},  D_y:=\{x=0\}$. Let $D$ be a neighbourhood of $0$ in $D_x$ and let $\mathds{C}[D]$ be the coordinate ring of $D$. Set $D^{\ast}:=D\setminus \{0\}$. \\ \indent
%We consider a small disc $D$ in $D_x$ centred at $0$  and we set $D^{\ast}:=D\setminus \{0\}$. We set $U=(D\times \mathds{C})\setminus D_y$.  
Let $\mathcal{M}$  be an algebraic meromorphic flat bundle on a neighbourhood of $D$ in  $\mathds{A}^2_{\mathds{C}}$ with poles along $D$. In algebraic terms, $\mathcal{M}_{\hat{D}}$ defines a $\mathds{C}[D]((y))$-differential module. At the cost of shrinking $D$ if necessary, we can suppose that the restriction $\mathcal{M}^{\ast}$ of $\mathcal{M}$ to a neighbourhood of $D^{\ast}$ has good formal decomposition at every point of $D^{\ast}$. \\ \indent
There is a ramification $v=y^{1/d}$, $d\geq 1$ and a finite Galois extension $L/\mathds{C}(x)$ such that the set $\mathcal{I}$ of generic irregular values for $\mathcal{M}$ lies in $\Frac L(v)$. If $p: D_L\longrightarrow D$ is the normalization of $D$ in $L$, the generic irregular values of $\mathcal{M}$ are thus meromorphic functions on $D_L\times \mathds{A}^1_v$. We have 
\begin{equation}\label{LT}
L((v))\otimes \mathcal{M}\simeq  \bigoplus_{a\in \mathcal{I}}\mathcal{E}^{a} \otimes \mathcal{R}_a
\end{equation}
where the $\mathcal{R}_a$ are regular. Following \cite[3.2.4]{Andre}, we recall the following
\begin{definition}
We say that \textit{$\mathcal{M}$ is semi-stable at $P\in D$} if
\begin{enumerate}
\item We have $\mathcal{I}\subset \mathds{C}[D_L]_P((v))$.
\item The decomposition \eqref{LT} descends to $\mathds{C}[D_L]_P((v))\otimes \mathcal{M}$. 
\end{enumerate}
\end{definition}
In this definition, $\mathds{C}[D_L]_P$ denotes the localization of $\mathds{C}[D_L]$ above $P$. This is a semi-local ring. Let $\pi_a \in L((v))\otimes \End\mathcal{M}$ be the projector on the factor  $\mathcal{E}^{a} \otimes \mathcal{R}_a$. As explained in \cite[3.2.2]{Andre}, the point $P$ is stable if and only if the generic irregular values of $\mathcal{M}$ and the coefficients of the $\pi_a$ in a basis of  $\End\mathcal{M}$ belong to $\mathds{C}[D_L]_P((v))$. Since $\mathcal{M}$ has good formal decomposition at any point of $D^{\ast}$, the generic irregular values of $\mathcal{M}$ and the coefficients of the $\pi_a$ in a basis of  $\End\mathcal{M}$ belong to $\mathds{C}[D_L]_P((v))$ for every $P\in D^{\ast}$. Hence, they belong $\mathds{C}[D^{\ast}_L]((v))$ where $D^{\ast}_L :=D\setminus p^{-1}(0)$. Thus
\begin{equation}\label{LTglobal}
\mathds{C}[D^{\ast}_L]((v))\otimes \mathcal{M}\simeq \mathds{C}[D^{\ast}_L]((v)) \otimes \mathcal{N}^{\ast}_L  
\end{equation}
where 
$$
\mathcal{N}^{\ast}_L =\bigoplus_{a\in \mathcal{I}}\mathcal{E}^{a} \otimes \mathcal{R}_a 
$$ 
is a germ of meromorphic connection defined on a neighbourhood of $D_L^{\ast}$ in $D_L \times \mathds{A}^1_v$ and with poles along $D_L^{\ast}$.
The action of 
$$
\Gal(L/\mathds{C}(x))\times \mathds{Z}/d\mathds{Z}
$$ 
on the left-hand side of \eqref{LTglobal} induces an action on $\mathcal{N}^{\ast}_L$. Taking the invariants yields a meromorphic flat bundle $\mathcal{N}^{\ast}$ defined on a neighbourhood $\Omega$ of $D^{\ast}$ in $\mathds{A}^2_{\mathds{C}}$. 
%By definition, the pull-back of $\mathcal{N}^{\ast}$ by $\rho : D_L\times \mathds{A}^1_v \longrightarrow D\times \mathds{A}^1_y$ is $\mathcal{N}^{\ast}_L$. 
By Galois descent, \eqref{LTglobal} descents to an isomorphism $\iso^{\ast}$ between the formalizations of $\mathcal{M}^{\ast}$ and $\mathcal{N}^{\ast}$ along $D^{\ast}$.
\subsection{Reduction to the problem of extending the formal model}
The goal of this subsection is to show that Theorem \ref{mainth} reduces to prove that the $\mathcal{M}^{\ast}$-marked connection $(\mathcal{N}^{\ast}, \iso^{\ast} )$ defined in \ref{extformalmodel} extends into a $\mathcal{M}$-marked connection in a neighbourhood of $0$. To do this, we need three preliminary lemmas. The notations and constructions from \ref{extformalmodel} are in use.
\begin{lemme}\label{splitextension}
Suppose that $\mathcal{N}^{\ast}$ extends into a meromorphic flat bundle $\mathcal{N}$ defined in a neighbourhood of $D$ in $\mathds{A}^2_{\mathds{C}}$ and with poles along $D$. Then, $\mathcal{N}$ is semi-stable at $0$.
%then there exists a neighbourhood $U$ of $0$ in $\mathds{C}^2$ and 
%a quasi-finite morphism $\rho : U \longrightarrow \mathds{C}^2$ étale above 
%$\mathds{C}^2 \setminus D_x$ such that $\mathcal{I}\subset \Gamma(U, \mathcal{O}_U(\ast \rho^{-1}(D_x)))$ and 
%$$
%\rho^{\ast}\mathcal{N}\simeq  \bigoplus_{a\in \mathcal{I}}\mathcal{E}^{a} \otimes \mathcal{R}_a
%$$
%where the $\mathcal{R}_a$ are regular with poles along $\rho^{-1}(D_x)$.
%$$
%\rho^{\ast}\mathds{C}\llbracket  x^{1/d}, y^{1/d}\rrbracket \otimes \mathcal{N}\simeq \bigoplus_{a\in \mathcal{I}}\mathcal{E}^{a} \otimes \mathcal{R}_a
%$$
% where $\mathcal{R}_a$ are a regular.
\end{lemme}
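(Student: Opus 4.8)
The plan is to verify the two conditions of \cite[3.2.4]{Andre} defining semi-stability of $\mathcal{N}$ at $0$, the key input being that the generic good model $\mathcal{N}^{\ast}$ is \emph{globally} split along $D^{\ast}$, whereas $\mathcal{N}$ is, by hypothesis, a genuine meromorphic bundle near $0$ with poles along $D$ \emph{only}. I would start by pulling back along $p\colon D_L\to D$ and the ramification $v=y^{1/d}$, writing $\mathcal{N}_L$ for the connection on a neighbourhood of $p^{-1}(0)\times\{0\}$ in $D_L\times\mathds{A}^1_v$ so obtained; by the construction of \ref{extformalmodel} one has $(\mathcal{N}_L)_{|D_L^{\ast}}=\mathcal{N}^{\ast}_L=\bigoplus_{a\in\mathcal{I}}\mathcal{E}^a\otimes\mathcal{R}_a$. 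The crucial preliminary observation is that $\mathcal{N}_L$ is again a \emph{genuine} bundle with poles along $\{v=0\}$ only: the map $p\times(v\mapsto v^d)$ is finite and étale away from $\{v=0\}$ and from $p^{-1}(0)\times\mathds{A}^1_v$, and $\mathcal{N}$ has no pole along $D_y$, the image of $p^{-1}(0)\times\mathds{A}^1_v$. With this in hand, semi-stability of $\mathcal{N}$ at $0$ reduces to (1) $\mathcal{I}\subset\mathds{C}[D_L]_0((v))$ and (2) the decomposition \eqref{LTglobal} descending over $\mathds{C}[D_L]_0((v))\otimes\mathcal{N}_L$.

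For (1): the elements of $\mathcal{I}$ are the generic irregular values of $\mathcal{N}$ (since $\mathcal{N}$ and $\mathcal{N}^{\ast}$ agree on a dense open) and lie in $\mathds{C}[D_L^{\ast}]((v))$ by \ref{extformalmodel}, so it only remains to check that each $a\in\mathcal{I}$ is regular along $p^{-1}(0)$. If some $a$ had a pole there, then $\mathcal{E}^a$ — hence the summand $\mathcal{E}^a\otimes\mathcal{R}_a$, hence $(\mathcal{N}_L)_{|D_L^{\ast}}$ near $p^{-1}(0)$ — would carry a pole along the divisor $p^{-1}(0)\times\mathds{A}^1_v$, which is not contained in $\{v=0\}$; this contradicts the observation above that $\mathcal{N}_L$ has poles along $\{v=0\}$ only. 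Hence (1) holds, and moreover each $\mathcal{E}^a$ extends, still with poles along $\{v=0\}$ only, to a neighbourhood of $p^{-1}(0)\times\{0\}$.

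Finally, condition (2), that the decomposition $\bigoplus_{a}\mathcal{E}^a\otimes\mathcal{R}_a$ of $(\mathcal{N}_L)_{|D_L^{\ast}}$ descends over $\mathds{C}[D_L]_0((v))$, is the heart of the matter. For each $a$, let $\mathcal{F}_a\subset\mathcal{N}_L$ be the flat sub-connection obtained by saturating the summand $\mathcal{E}^a\otimes\mathcal{R}_a$ (defined over $D_L^{\ast}$) near $p^{-1}(0)\times\{0\}$; being a sub-bundle of $\mathcal{N}_L$, it has poles along $\{v=0\}$ only, and it restricts to $\mathcal{E}^a\otimes\mathcal{R}_a$ over $D_L^{\ast}$. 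It then suffices to show that $\mathcal{N}_L=\bigoplus_{a}\mathcal{F}_a$ near $p^{-1}(0)\times\{0\}$: granting this, twisting each $\mathcal{F}_a$ by $\mathcal{E}^{-a}$ — which extends over $p^{-1}(0)$ by condition (1) — produces a decomposition of $\mathds{C}[D_L]_0((v))\otimes\mathcal{N}_L$ inducing \eqref{LTglobal}. To see $\mathcal{N}_L=\bigoplus_{a}\mathcal{F}_a$, note that the comparison map $\bigoplus_{a}\mathcal{F}_a\to\mathcal{N}_L$ is an isomorphism over $D_L^{\ast}$; were it not one near $p^{-1}(0)\times\{0\}$, it would be a non-trivial modification supported on $p^{-1}(0)\times\mathds{A}^1_v$, so that the gauge transformation relating, through this map, the connection of $\mathcal{N}_L$ to that of $\bigoplus_{a}\mathcal{E}^a\otimes\mathcal{R}_a$ would be singular along $p^{-1}(0)\times\mathds{A}^1_v$. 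But the component of each of these connections along $D_L$ is regular along $p^{-1}(0)\times\mathds{A}^1_v$ — for $\mathcal{N}_L$ because it has poles along $\{v=0\}$ only, for $\bigoplus_{a}\mathcal{E}^a\otimes\mathcal{R}_a$ because each $a$ is regular there by condition (1) and each $\mathcal{R}_a$ is regular — which forces that gauge to be regular after all, a contradiction. Hence $\mathcal{N}_L=\bigoplus_{a}\mathcal{F}_a$ near $p^{-1}(0)\times\{0\}$, which gives (2) and, with it, the semi-stability of $\mathcal{N}$ at $0$. I expect this last step — ruling out a collision of the $\mathcal{F}_a$ along $p^{-1}(0)$ from the sole constraint on the polar divisor, which in particular requires controlling the regular parts $\mathcal{R}_a$ near $p^{-1}(0)$ — to be the main difficulty; an alternative, perhaps shorter, route would go through André's numerical characterisation of semi-stable points \cite[3.2.2]{Andre} together with the fact that $\mathcal{M}$, being good along $D^{\ast}$, is semi-stable at every point of $D^{\ast}$.
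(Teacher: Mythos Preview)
Your plan is the right one, but both steps have real gaps, and the step you treat as routine---condition~(1)---is where the first substantive difficulty lies.

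For~(1), you argue that a pole of some $a$ along $p^{-1}(0)$ would force $\mathcal{N}_L$ to have a pole there, because the summand $\mathcal{E}^a\otimes\mathcal{R}_a$ does. This does not follow. The identification $\mathcal{N}_L|_{D_L^\ast}\simeq\bigoplus_a\mathcal{E}^a\otimes\mathcal{R}_a$ lives only over a neighbourhood of $D_L^\ast$; on the overlap near $p^{-1}(0)$ the gauge relating a frame of $\mathcal{N}_L$ (which does extend) to the split frame (which does not) may have an essential singularity along $p^{-1}(0)\times\mathds{A}^1_v$, and the $g^{-1}dg$ term can absorb the pole of $da$. So no contradiction arises as stated. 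In the paper this is handled in the opposite order: one first splits $\mathcal{N}$ globally, reduces to a single irregular value, and then invokes \cite[3.3.1]{Andre}, which is precisely the (non-trivial) assertion that a meromorphic connection with poles along $D$ only and a single generic irregular value $a$ must have $a\in\mathds{C}[D]((y))$. Your argument restates that theorem rather than proving it.

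For~(2), ``saturating'' a sub-connection across a codimension-one divisor is not a well-defined operation, and your gauge-regularity argument again tacitly assumes the comparison map is meromorphic along $p^{-1}(0)$. The paper's mechanism is concrete and uses the extension hypothesis in an essential way: the projectors $\pi_a$ onto the summands of $\mathcal{N}^\ast$ are \emph{flat} sections of $\End\mathcal{N}$ away from $D$, so parallel transport extends them to all of $D^\ast\times\mathds{A}^1$; because $\mathcal{N}$ has no pole along $D_y$, the monodromy of $\End\mathcal{N}$ around a loop encircling $D_y$ is trivial, so the $\pi_a$ extend across $D_y\setminus\{0\}$; Hartogs then fills in $0$. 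The resulting global idempotents $\varpi_a$ split $\mathcal{N}$, local freeness of each factor comes from the Deligne--Malgrange lattice \cite[3.3.2]{Reseaucan}, and once $a\in\mathds{C}[D]((y))$ is known, regularity of $\mathcal{E}^{-a}\otimes(\mathcal{N}_a)_{\hat D}$ follows from \cite[4.1]{Del}. The alternative you suggest via \cite[3.2.2]{Andre} is not a shortcut: that criterion asks precisely that the $a$'s and the coefficients of the $\pi_a$ lie in $\mathds{C}[D_L]_0((v))$, which is what has to be proved, and knowing this over $D^\ast$ does not by itself force it at~$0$.
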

\begin{proof}
It is enough to treat the case where $K=\mathds{C}(x)$ and $d=1$. In that case, discussion \ref{extformalmodel}  shows that on a neighbourhood $\Omega$ of $D^{\ast}$ in $\mathds{A}^2_{\mathds{C}}$, we have
$$
\mathcal{N}^{\ast}=\bigoplus_{a\in \mathcal{I}}\mathcal{N}^{\ast}_a
$$
 where 
$\mathcal{N}^{\ast}_a$ is a meromorphic connection on $\Omega$ with poles along $D^{\ast}$ and with single irregular value $a$. The open $D\times \mathds{A}^1_{\mathds{C}}$ retracts on the small neighbourhood on which $\mathcal{N}$  is defined. Since $\mathcal{N}$  is smooth away from $D$, we deduce that $\mathcal{N}$  extends canonically into a meromorphic connection on $D\times \mathds{A}^1_{\mathds{C}}$  with poles along $D$. \\ \indent
Let $a\in \mathcal{I}$. The restriction of the projector $\pi_a$ to the complement of $D^{\ast}$ in $\Omega$ is a flat section of $\End \mathcal{N}$. Since $D^{\ast}\times \mathds{A}^1_{\mathds{C}}$ retracts on $\Omega$, parallel transport allows to extend $\pi_a$ canonically to $D^{\ast}\times \mathds{A}^1_{\mathds{C}}$. We still denote by $\pi_a$ this extension. Hence, $\mathcal{N}^{\ast}_a$ extends  into a meromorphic connection on $D^{\ast}\times \mathds{A}^1_{\mathds{C}}$  with poles along $D^{\ast}$. 
%For $\epsilon, \eta >0$ small enough, the path $\gamma : t\longrightarrow (\epsilon e^{2i\pi t}, \eta)$ lies in $\Omega$.
Let $\gamma$ be a small loop in $\Omega$ going around the axis $D_y$. By assumption, the monodromy of  $\mathcal{N}$ along $\gamma$ is trivial. Thus, $\pi_a$ is invariant under the monodromy of  $\End\mathcal{N}$ along $\gamma$. Hence, $\pi_a$ extends canonically to $(D\times \mathds{A}^1_{\mathds{C}})\setminus \{0\}$. By Hartog's property, it extends further into a section $\varpi_a$ of 
$\End \mathcal{N}$ on $D\times \mathds{A}^1_{\mathds{C}}$.\\ \indent
Set $\mathcal{N}_a :=\varpi_a(\mathcal{N})\subset \mathcal{N}$ for every $a\in \mathcal{I}$. We have $\varpi_a^{2}=\varpi_a$ and $\sum_{a\in \mathcal{I}}\varpi_a = \Id_{\mathcal{N}}$ because these equalities hold on a non empty open set. 
Hence,  $\mathcal{N} =\oplus_{a\in \mathcal{I}} \mathcal{N}_a$. Since $\varpi_a$ is flat, the connection on $\mathcal{N}$ preserves each $\mathcal{N}_a$. Let us prove that the $\mathcal{N}_a$ are locally free as $\mathcal{O}_{D\times \mathds{A}^1_{\mathds{C}}}(\ast D)$-modules.
\\ \indent
Let $E$ 
be a Deligne-Malgrange lattice \cite{Reseaucan} for 
$\mathcal{N}$. Since we work in dimension 2, we know from 
\cite[3.3.2]{Reseaucan} that $E$ is a vector bundle. 
We observe that $\varpi_a$ stabilizes $E$ away from $0$. By Hartog's property, we deduce that $\varpi_a$ stabilizes $E$. Hence, $\varpi_a(E)$ is a direct factor of $E$. So $\varpi_a(E)$ is  a vector bundle. Thus, 
$$
\mathcal{N}_a =\varpi_a(\mathcal{N}) =\varpi_a(E(\ast D))= (\varpi_a(E))(\ast D)
$$
 is a locally free $\mathcal{O}_{D\times \mathds{A}^1_{\mathds{C}}}(\ast D)$-module of finite rank with connection extending $\mathcal{N}^{\ast}_a$. To prove lemma  \ref{splitextension}, we are thus left to consider the case where $\mathcal{I}=\{a\}$.\\ \indent
If $\mathcal{I}=\{a\}$, then  \cite[3.3.1]{Andre} implies $a\in \mathds{C}[D]((y))$. Hence, $\mathcal{R}:=\mathcal{E}^{-a}\otimes \mathcal{N}_{\hat{D}}$ is a formal meromorphic connection with poles along $D$. By assumption, $\mathcal{R}$ is generically regular along $D$. From \cite[4.1]{Del}, we deduce that  $\mathcal{R}$ is regular. Hence, $\mathcal{N}_{\hat{D}}=\mathcal{E}^a\otimes \mathcal{R}$ with $\mathcal{R}$ regular, which concludes the proof of lemma \ref{splitextension}.
\end{proof}

\begin{lemme}\label{trueifquasisplit}
Let $\mathcal{N}$ be a meromorphic flat connection with poles along $D$. We suppose that  $\mathcal{N}$  is semi-stable at $0$ and that $\Irr_D^{\ast}\mathcal{N}$ and $\Irr_D^{\ast}\End\mathcal{N}$ are local systems in a neighbourhood of $0$. Then, $\mathcal{N}$ has good formal decomposition at $0$.
\end{lemme}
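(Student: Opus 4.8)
The plan is to reduce immediately, by Galois descent and ramification, to the case $L=\mathds{C}(x)$ and $d=1$, so that semi-stability at $0$ means $\mathcal{I}\subset\mathds{C}[D]((y))$ and the decomposition $\mathcal{N}_{\hat D}=\bigoplus_{a\in\mathcal{I}}\mathcal{E}^a\otimes\mathcal{R}_a$ holds over $\mathds{C}[D]((y))$ with the $\mathcal{R}_a$ regular \emph{formal} connections along $D$. Goodness of $\mathcal{N}$ at $0$ then amounts to the single assertion that each $\mathcal{R}_a$, a formal meromorphic connection with poles along $D$ that is \emph{generically} regular along $D$, is regular at $0$ as well (the pairwise differences $a-b$ already have the required good pole structure once $\mathcal{I}\subset\mathds{C}[D]((y))$, since in dimension $2$ the order of $a-b$ along $D$ is constant on a punctured disc, hence everywhere by the semi-stability hypothesis — this is the André-type argument already invoked in the proof of Lemma~\ref{splitextension} via \cite[3.3.1]{Andre}). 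So the heart of the matter is a statement about regularity of the formal pieces $\mathcal{R}_a$ at the origin.

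First I would fix $a\in\mathcal{I}$ and consider $\mathcal{R}:=\mathcal{E}^{-a}\otimes(\mathcal{E}^a\otimes\mathcal{R}_a)$, which is a direct factor of $\mathcal{E}^{-a}\otimes\mathcal{N}_{\hat D}$; the point is to extract from the hypotheses on $\Irr_D^\ast\mathcal{N}$ and $\Irr_D^\ast\End\mathcal{N}$ enough control to force regularity. The mechanism is: the irregularity complex sees the generic slopes along $D$, and a local system irregularity complex in a neighbourhood of $0$ means the stalk of $\Irr_D^\ast$ at $0$ has the same dimension as at a nearby point. Concretely, Mebkhout's theorem gives that $\dim(\mathcal{H}^\bullet\Irr_D^\ast\mathcal{N})_0$ controls (through the perversity and the index formula for irregularity, cf. the one-variable index theorems) a numerical invariant built from the slopes and ranks of the formal pieces of $\mathcal{N}$ at $0$; if this invariant is locally constant and $\mathcal{N}$ is generically good along $D$, then the slopes cannot jump, and in particular no regular generic piece can acquire positive slope at $0$. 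I would run this simultaneously for $\mathcal{N}$ and $\End\mathcal{N}$ because it is the combination of the two — as in the philosophy of Theorem~\ref{mainth} — that pins down \emph{both} the slopes and the fact that the irregular values themselves (not just their differences) behave well; $\End\mathcal{N}$ sees the differences $a-b$ and $\mathcal{N}$ sees the $a$'s, so together they certify goodness.

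The cleanest route, and the one I expect to be used, is actually to \emph{invoke the already-proved cases}: by Lemma~\ref{reductionDim2} we are in dimension $2$; a smooth curve $Z$ through $0$ transverse to $D$ and generic is non-characteristic for $\mathcal{N}$ (same Kashiwara--Schapira argument on characteristic cycles as in Lemma~\ref{reductionDim2}), and $i_Z^+\mathcal{N}$ is a meromorphic connection on a curve, hence automatically has good formal decomposition by \cite{SVDP}; Kashiwara's restriction theorem identifies its irregularity number at $0$ with the stalk of $\Irr_D^\ast\mathcal{N}$ at $0$, which by hypothesis equals the generic value; and the generic value is, by the generic goodness along $D$, the ``expected'' irregularity. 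Comparing the Newton polygon of $i_Z^+\mathcal{N}$ at $0$ (which coincides with that of $\mathcal{N}$ at $0$) with the generic Newton polygon of $\mathcal{N}$ along $D$, and using André's criterion \cite[3.4.1]{Andre} exactly as at the end of Lemma~\ref{reductionDim2}, forces the two polygons to agree, hence goodness. The main obstacle I anticipate is making the comparison of irregularity numbers with Newton polygons fully rigorous when $\mathcal{N}$ is only semi-stable rather than good: one must check that semi-stability already guarantees the irregular values and projectors are defined at $0$ (this is built into the definition), so that the only thing that can fail is regularity of the $\mathcal{R}_a$ at $0$, and that failure would be detected by a jump in the irregularity complex at $0$ — contradicting the local-system hypothesis. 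That bookkeeping, rather than any single hard theorem, is where the real work lies.
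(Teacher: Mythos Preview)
Your plan misidentifies what has to be shown. Semi-stability at $0$ already hands you the decomposition $\mathds{C}[D_L]((v))\otimes\mathcal{N}\simeq\bigoplus_{a\in\mathcal{I}}\mathcal{E}^a\otimes\mathcal{R}_a$ with the $\mathcal{R}_a$ \emph{regular}; there is nothing left to prove about regularity of the $\mathcal{R}_a$, and ``goodness at $0$'' is not that statement. What good formal decomposition demands, and what is \emph{not} a consequence of $\mathcal{I}\subset\mathds{C}[D_L]((v))$, is that for every $a\in\mathcal{I}$ (resp.\ every pair $a\neq b$) the coefficient of the top power of $1/v$ in $a$ (resp.\ in $a-b$) does not vanish at the point of $D_L$ over $0$. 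Your claim that the differences $a-b$ automatically have the right pole structure once $\mathcal{I}\subset\mathds{C}[D]((y))$ is false: with $a=1/y^2$ and $b=1/y^2+x/y$ one has $a-b=-x/y$, generic order $1$ but vanishing at $x=0$. Ruling this out is exactly what the hypothesis on $\Irr_D^\ast\End\mathcal{N}$ is for; it cannot be dispatched in a parenthesis.

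Your ``cleanest route'' is pointed in the right direction, and the paper's proof is a sharpened version of it. After the characteristic-cycle argument gives non-characteristicness of the transverse axis $D_y$, Kashiwara's restriction theorem yields $\dim(\mathcal{H}^1\Irr_D^\ast\mathcal{N})_0=\dim\mathcal{H}^1\Irr_0^\ast(\mathcal{N}_{|D_y})$. The left side equals the generic value $\sum_a(\ord_y a)\rk\mathcal{R}_a$ by the local-system hypothesis; the right side is $\sum_a(\ord_y a(P))\rk\mathcal{R}_a$ where $P\in D_L$ sits over $0$, since the irregular values of the restriction are the specializations $a(P)$. Because $\ord_y a(P)\le\ord_y a$ termwise, equality of the weighted sums forces $\ord_y a(P)=\ord_y a$ for every $a$, i.e.\ the leading coefficient of each $a$ survives at $P$. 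Running the same computation for $\End\mathcal{N}$ gives the analogous conclusion for every difference $a-b$, and that is precisely goodness. No Newton polygon bookkeeping and no appeal to \cite[3.4.1]{Andre} are needed: the single irregularity number suffices because each individual order can only drop.
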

\begin{proof}
Let $\mathcal{I}$ be the set of irregular values of $\mathcal{N}$ at $0$. 
There is a ramification $v=y^{1/d}$, $d\geq 1$ and a finite Galois extension $L/\mathds{C}(x)$ such that $\mathcal{I} \subset L((v))$. Let $D_L\longrightarrow D$ be the normalization of $D$ in $L$. At the cost of shrinking $D$, we can suppose that every point of $D$ is semi-stable for $\mathcal{N}$. Hence, $\mathcal{I} \subset \mathds{C}[D_L]((v))$ and
$$
\mathds{C}[D_L]((v))\otimes \mathcal{N}=\bigoplus_{a\in \mathcal{I}}\mathcal{E}^{a} \otimes \mathcal{R}_a
$$
where  the connections $\mathcal{R}_a$ are regular. As seen in the proof of 
lemma \ref{reductionDim2}, the assumption on  $\Irr_D^{\ast}$ implies that  any smooth curve transverse to $D$ is non 
characteristic for $\mathcal{N}$. Taking the axis $D_y$  yields  
$$
\dim 
\mathcal{H}^1\Irr_{0}^{\ast} \mathcal{N}_{|D_y} =\dim 
(\mathcal{H}^1\Irr_{D}^{\ast} \mathcal{N})_0= \sum_{a\in \mathcal{I}} 
(\ord_y a)  \rk \mathcal{R}_a
$$ 
On the other hand, choose a point $P\in  D_L$ above $0$. 
Then, the irregular values of 
$\mathcal{N}_{|D_y}$ are the $a(P)$, $a\in \mathcal{I}$. Thus, 
$$
\mathcal{H}^1\Irr_{0}^{\ast} \mathcal{N}_{|D_y} =\sum_{a\in \mathcal{I}}\ord_y a(P)\rk \mathcal{R}_a
$$
Hence, $\ord_y a(P)=\ord_y a$ for every $a\in \mathcal{I}$. In 
particular, the coefficient function of the highest power of $1/v$ 
contributing to $a\in \mathcal{I}$ does not vanish at $P$. Arguing similarly for $\End\mathcal{N}$, we obtain that $\mathcal{N}$ has good formal decomposition at $0$.
\end{proof}

\begin{lemme}\label{transferthyp}
Suppose that $\Irr_D^{\ast}\mathcal{M}$ is a local system. For every $\mathcal{M}$-marked connection $(\mathcal{N}, \iso)$, the complex  $\Irr_D^{\ast}\mathcal{N}$ is a  local system.
\end{lemme}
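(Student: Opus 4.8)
The plan is to detect the property ``$\Irr_D^{\ast}(-)$ is a local system near $0$'' through the characteristic cycle of the associated solution complex, and then to use that this characteristic cycle is a \emph{formal} invariant, so that it cannot distinguish $\mathcal{N}$ from $\mathcal{M}$.

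First I would record a reformulation valid for any meromorphic connection $\mathcal{N}'$ with poles along $D$ (recall we are in dimension $2$, so $D$ is a smooth curve and, by Mebkhout's theorem \cite{Mehbgro}, $\Irr_D^{\ast}\mathcal{N}'$ is perverse on $D$). As in the proof of Lemma \ref{reductionDim2}, there is a distinguished triangle $j_{!}L' \to \Sol\mathcal{N}' \to i_{\ast}\Irr_D^{\ast}\mathcal{N}'$ with $L'$ a local system on $X\setminus D$. Since $L'$ is a genuine local system on all of $X\setminus D$, the complex $j_{!}L'$ is constructible for the stratification $\{X\setminus D, D\}$, so $CC(j_{!}L')$ is supported on $T_X^{\ast}X\cup T_D^{\ast}X$ and carries no conormal to a point of $D$. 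By additivity of the characteristic cycle, the multiplicity of $CC(\Sol\mathcal{N}')$ along $T_P^{\ast}X$ therefore equals that of $CC(i_{\ast}\Irr_D^{\ast}\mathcal{N}')$ for every $P\in D$; since $i$ is a closed immersion and $\Irr_D^{\ast}\mathcal{N}'$ is perverse on the curve $D$, this multiplicity vanishes for all $P$ in a neighbourhood of $0$ if and only if $\Irr_D^{\ast}\mathcal{N}'$ is a local system in a neighbourhood of $0$.

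Next I would invoke the formal invariance of the characteristic cycle. The cycles $CC(\Sol\mathcal{N}')$ and $CC(\mathcal{N}')$ have the same support and the same multiplicities, as de Rham and duality preserve characteristic cycles \cite{KS}, and for a holonomic $\mathcal{D}_X$-module the characteristic cycle in a neighbourhood of a point $P$ depends only on its formal completion at $P$, by faithful flatness of $\mathcal{O}_{X,P}\to\widehat{\mathcal{O}}_{X,P}$ applied to a good filtration. Applying this to $\mathcal{N}'=\mathcal{N}$ and to $\mathcal{N}'=\mathcal{M}$: the marking $\iso\colon\mathcal{N}_{\hat D}\to\mathcal{M}_{\hat D}$ induces, after completing at any $P$ near $0$ in $D$, an isomorphism $\mathcal{N}_{\hat P}\simeq\mathcal{M}_{\hat P}$, hence $CC(\Sol\mathcal{N})=CC(\Sol\mathcal{M})$ in a neighbourhood of $0$ in $X$. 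Combining the two steps: by hypothesis $\Irr_D^{\ast}\mathcal{M}$ is a local system near $0$, so $CC(\Sol\mathcal{M})$ has no component along $T_P^{\ast}X$ for $P$ near $0$; the same then holds for $CC(\Sol\mathcal{N})$; hence $\Irr_D^{\ast}\mathcal{N}$ is a local system near $0$.

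The point to get right is the equivalence in the first step between the vanishing of the point-conormal components of $CC(\Sol\mathcal{N}')$ near $0$ and $\Irr_D^{\ast}\mathcal{N}'$ being a local system near $0$: this uses that $j_{!}L'$ contributes no conormal to a point (which is where it matters that $L'$ extends as a local system across punctured neighbourhoods of points of $D$), together with the behaviour of characteristic cycles under the closed immersion $i\colon D\hookrightarrow X$, a conormal $[T_P^{\ast}X]$ appearing in $CC(i_{\ast}H)$ precisely when the perverse sheaf $H$ on $D$ fails to be smooth at $P$. Alternatively, should one have at hand a comparison theorem asserting that $\Irr_D^{\ast}$ depends only on the formalization along $D$, the lemma would be immediate from $\iso$; the characteristic-cycle argument above isolates exactly the part of such a statement that is needed here.
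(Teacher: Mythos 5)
Your argument is correct and follows essentially the same strategy as the paper: both proofs rest on Mebkhout's perversity theorem together with the formal invariance of the characteristic cycle (so that the marking forces the characteristic cycles of $\mathcal{N}$ and $\mathcal{M}$ to coincide near $0$), and then detect the local-system property of $\Irr_D^{\ast}$ from the characteristic cycle. The only cosmetic difference is the detection step: the paper converts the characteristic cycle into the local Euler--Poincar\'e characteristic via the local index theorem and uses that a perverse sheaf on a curve with constant $\chi$ is a local system, whereas you read off smoothness directly from the absence of point-conormal components via the triangle $j_{!}L'\to \Sol\mathcal{N}'\to i_{\ast}\Irr_D^{\ast}\mathcal{N}'$ --- two equivalent formulations of the same fact.
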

\begin{proof}
From \cite{Mehbgro}, the complex $\Irr_{D}^{\ast} \mathcal{N}$ is perverse. To prove that it is a local system, it is thus enough to prove that the local Euler Poincaré characteristic $\chi(D, \Irr_D^{\ast}\mathcal{N}) : D\longrightarrow \mathds{Z}$ of $\Irr_{D}^{\ast} \mathcal{N}$ is constant. From the local index theorem \cite{Kajap}\cite{Bouka}, the local Euler Poincaré characteristic of $\Irr_{D}^{\ast} \mathcal{N}$ depends only on the characteristic cycle of $\mathcal{N}$. Since the characteristic cycle of $\mathcal{N}$ depends only on $\mathcal{N}$ via $\mathcal{N}_{\hat{D}}$, we have
$$
\chi(D, \Irr_D^{\ast}\mathcal{N})=\chi(D, \Irr_D^{\ast}\mathcal{M})
$$
By assumption, $\chi(D, \Irr_D^{\ast}\mathcal{M})$ is constant. Hence, $\chi(D, \Irr_D^{\ast}\mathcal{N})$ is constant, which finishes the proof of lemma \ref{transferthyp}.
\end{proof}

\begin{proposition}\label{sietendbon}
Let $D$ be an open neighbourhood of $0$ in an hyperplane of $\mathds{A}^2_{\mathds{C}}$. Let $\mathcal{M}$  be an algebraic meromorphic flat bundle on a neighbourhood of $D$ with poles along $D$. Set $D^{\ast}=D\setminus \{0\}$ and let $\mathcal{M}^{\ast}$ be the restriction of $\mathcal{M}$ to a neighbourhood of $D^{\ast}$. Let  $(\mathcal{N}^{\ast}, \iso^{\ast} )$ be the $\mathcal{M}^{\ast}$-marked connection constructed in 
\ref{extformalmodel}. Suppose that $\Irr_D^{\ast}\mathcal{M}$ and $\Irr_D^{\ast}\End\mathcal{M}$ are local systems in a neighbourhood of $0$. Then, if $(\mathcal{N}^{\ast}, \iso^{\ast} )$ extends into a $\mathcal{M}$-marked connection, $\mathcal{M}$  has good formal decomposition at  $0$.
\end{proposition}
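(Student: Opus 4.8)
The plan is to assemble the three preliminary lemmas \ref{splitextension}, \ref{trueifquasisplit} and \ref{transferthyp} into a short chain. Assume that $(\mathcal{N}^{\ast}, \iso^{\ast})$ extends into a $\mathcal{M}$-marked connection $(\mathcal{N}, \iso)$. Unpacking the definition, this means that $\mathcal{N}$ is a germ of meromorphic flat bundle with poles along $D$, defined in a neighbourhood of $D$ in $\mathds{A}^2_{\mathds{C}}$, whose restriction to a neighbourhood of $D^{\ast}$ is $\mathcal{N}^{\ast}$, and that $\iso : \mathcal{N}_{\hat{D}}\longrightarrow \mathcal{M}_{\hat{D}}$ is an isomorphism of formal connections extending $\iso^{\ast}$. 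The goal is then to show that $\mathcal{N}$, hence $\mathcal{M}$, has good formal decomposition at $0$.

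First I would invoke lemma \ref{splitextension}: its hypothesis is exactly that $\mathcal{N}^{\ast}$ extends into a meromorphic flat bundle $\mathcal{N}$ with poles along $D$ defined near $D$, which is what we are assuming; it yields that $\mathcal{N}$ is semi-stable at $0$. Next I would transfer the irregularity hypotheses from $\mathcal{M}$ to $\mathcal{N}$. Since $(\mathcal{N}, \iso)$ is a $\mathcal{M}$-marked connection and $\Irr_D^{\ast}\mathcal{M}$ is a local system near $0$, lemma \ref{transferthyp} gives that $\Irr_D^{\ast}\mathcal{N}$ is a local system near $0$. Applying the same lemma to the $\End\mathcal{M}$-marked connection $(\End\mathcal{N}, \End\iso)$ — note that $\iso$ induces a formal isomorphism $\End\mathcal{N}_{\hat{D}}\simeq \End\mathcal{M}_{\hat{D}}$, so this is indeed an $\End\mathcal{M}$-marked connection — together with the hypothesis that $\Irr_D^{\ast}\End\mathcal{M}$ is a local system near $0$, we obtain that $\Irr_D^{\ast}\End\mathcal{N}$ is a local system near $0$ as well. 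At this stage $\mathcal{N}$ satisfies all the hypotheses of lemma \ref{trueifquasisplit}, so $\mathcal{N}$ has good formal decomposition at $0$. Finally, good formal decomposition at $0$ depends only on the formal germ at $0$, and $\iso$ induces an isomorphism of formal connections between the germs of $\mathcal{N}$ and $\mathcal{M}$ at $0$; therefore $\mathcal{M}$ has good formal decomposition at $0$, which is the assertion of the proposition.

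I do not expect a genuine obstacle in this proposition: all the substantive work has been localised in the three lemmas (the Hartogs-type extension of the projectors and of the Deligne--Malgrange lattice in lemma \ref{splitextension}, the index-theoretic invariance of the local Euler--Poincaré characteristic under formal equivalence in lemma \ref{transferthyp}, and the comparison of Newton polygons via non-characteristicity in lemma \ref{trueifquasisplit}). The only point requiring care is the bookkeeping: checking that the marking $\iso$ propagates to $\End$ so that lemma \ref{transferthyp} can be applied twice, and verifying that the "local system in a neighbourhood of $0$" hypotheses are exactly the ones consumed downstream by lemma \ref{trueifquasisplit}.
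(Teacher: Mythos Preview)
Your proposal is correct and follows exactly the paper's own argument: invoke lemma \ref{splitextension} to get semi-stability of $\mathcal{N}$ at $0$, then lemma \ref{transferthyp} (applied to both $\mathcal{M}$ and $\End\mathcal{M}$) to transfer the local-system hypotheses to $\mathcal{N}$, and finally lemma \ref{trueifquasisplit} to conclude good formal decomposition for $\mathcal{N}$ and hence for $\mathcal{M}$. Your added remarks on passing to $\End$ and on the formal isomorphism at $0$ are the natural justifications the paper leaves implicit.
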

\begin{proof}
Let $(\mathcal{N}, \iso)$ be a $\mathcal{M}$-marked connection extending $(\mathcal{N}^{\ast}, \iso^{\ast} )$.
From lemma \ref{splitextension}, the extension $\mathcal{N}$ is semi-stable at $0$.
From lemma \ref{transferthyp}, we know that $\Irr_D^{\ast}\mathcal{N}$ and $\Irr_D^{\ast}\End\mathcal{N}$ are local systems in a neighbourhood of $0$. From lemma \ref{trueifquasisplit}, we deduce that $\mathcal{N}$ has good formal decomposition at  $0$. Hence, so does $\mathcal{M}$.
\end{proof}

\section{Extension via moduli of Stokes torsors}\label{extension}

\subsection{A geometric extension criterion}\label{extensioncriterion}
In this subsection, we relate the moduli of Stokes torsors to the problem of extending marked connections. Let $D$ be an open subset of a hyperplane in $\mathds{A}^2_{\mathds{C}}$. Pick $P\in D$. Set $D^{\ast}:=D\setminus \{P\}$. Let $\mathcal{M}$  be an algebraic meromorphic flat bundle in a neighbourhood $U$ of $D$ in $\mathds{A}^2_{\mathds{C}}$ and with poles along $D$. Let $\mathcal{M}^{\ast}$ be the restriction of $\mathcal{M}$ to $U\setminus \{P\}$. Let $\pi : Y\longrightarrow \mathds{A}^2_{\mathds{C}}$ be a resolution of the turning point $P$ for  $\mathcal{M}$. Such a resolution exists by works of Kedlaya \cite{Kedlaya1}  and  Mochizuki \cite{Mochizuki2}. Let $\Delta$ be an open disc of $D$ containing $P$. Set $\Delta^{\ast}=\Delta\setminus\{P\}$. Set $E:=\pi^{-1}(\Delta)$ and pick $Q\in \Delta^{\ast}$. Let 
$$
\xymatrix{
\Phi : H^{1}(\partial E, \St_{\pi^{+}\mathcal{M}}^{<E})            \ar[r] & H^{1}(\partial Q, \St_{\mathcal{M}}^{<\Delta})       
}
$$
be the restriction morphism of Stokes torsors.
\begin{lemme}\label{exte}
 Let $(\mathcal{N}^{\ast}, \iso^{\ast})$ be a $\mathcal{M}^{\ast}$-marked connection such that $(\mathcal{N}_{Q}^{\ast}, \iso_{Q}^{\ast})$ lies in the image of $\Phi$. Then, $(\mathcal{N}^{\ast}, \iso^{\ast})$ extends into an $\mathcal{M}$-marked connection.
\end{lemme}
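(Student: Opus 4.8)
The plan is to transport the hypothesis through the resolution $\pi$ and reconstruct the extension from Stokes-torsor data on $E$. First I would use Lemma \ref{bijectionettangent}: the $\mathcal{M}^{\ast}$-marked connection $(\mathcal{N}^{\ast}, \iso^{\ast})$ corresponds to a $\St_{\mathcal{M}}^{<\Delta^{\ast}}(\mathds{C})$-torsor on $\partial\Delta^{\ast}$, and since $\Delta^{\ast}\subset D^{\circ}_x$ is a smooth punctured disc, by the local-system property established in the proof of Proposition \ref{smooth} this torsor is determined by its restriction to $\partial Q$, i.e. by the class $(\mathcal{N}^{\ast}_Q, \iso^{\ast}_Q)\in H^{1}(\partial Q, \St_{\mathcal{M}}^{<\Delta})(\mathds{C})$. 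By assumption this class lies in the image of $\Phi$, so there is a $\St_{\pi^{+}\mathcal{M}}^{<E}(\mathds{C})$-torsor $\mathcal{T}$ on $\partial E$ whose restriction to $\partial Q$ agrees with $(\mathcal{N}^{\ast}_Q, \iso^{\ast}_Q)$ (here I use $Q\in\Delta^{\ast}$ lies in the smooth locus of $E$, where $\pi$ is an isomorphism, so $\St_{\pi^{+}\mathcal{M}}^{<E}$ and $\St_{\mathcal{M}}^{<\Delta}$ agree near $\partial Q$).

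Next I would apply Lemma \ref{bijectionettangent} on $Y$: since $\pi^{+}\mathcal{M}$ is good along $E=\pi^{-1}(\Delta)$ (that is the point of the resolution), the torsor $\mathcal{T}$ produces a $\pi^{+}\mathcal{M}$-marked connection $(\widetilde{\mathcal{N}}, \widetilde{\iso})$ defined in a neighbourhood of $E$ in $Y$, with $\widetilde{\iso} : \widetilde{\mathcal{N}}_{\hat E}\to (\pi^{+}\mathcal{M})_{\hat E}$ a formal isomorphism. Pushing forward by $\pi$ and using that $\pi_{+}\pi^{+}$ recovers a meromorphic connection with poles along $\Delta$ (as in the proof of Theorem \ref{CDG}, via $(\pi_{+}\pi^{+}M,\pi_{+}\pi^{+}\iso)\simeq(M,\iso)$), I obtain an $\mathcal{M}_{|\Delta}$-marked connection $(\mathcal{N}, \iso)$ on a neighbourhood of $\Delta$. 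It then remains to check two things: that $(\mathcal{N}, \iso)$ restricted to $\Delta^{\ast}$ is isomorphic to $(\mathcal{N}^{\ast}, \iso^{\ast})_{|\Delta^{\ast}}$, and that $\mathcal{N}$ glues with $\mathcal{N}^{\ast}$ over the overlap to give a globally defined $\mathcal{M}$-marked connection on a neighbourhood of $D$.

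For the first point, $(\mathcal{N}, \iso)_{|\Delta^{\ast}}$ corresponds under Lemma \ref{bijectionettangent} to the restriction of $\mathcal{T}$ to $\partial\Delta^{\ast}$, which by construction has the same image in $H^{1}(\partial Q, \St_{\mathcal{M}}^{<\Delta})(\mathds{C})$ as $(\mathcal{N}^{\ast}_Q,\iso^{\ast}_Q)$; by the local-system property on the punctured disc (Proposition \ref{smooth}, the isomorphism $H^{1}(\partial\Delta^{\ast})\xrightarrow{\sim}H^{1}(\partial Q)$), these two torsors on $\partial\Delta^{\ast}$ are isomorphic, so $(\mathcal{N}, \iso)_{|\Delta^{\ast}}\simeq(\mathcal{N}^{\ast}, \iso^{\ast})_{|\Delta^{\ast}}$. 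Fix such an isomorphism $\psi$. For the second point, use this $\psi$ as gluing data: $\mathcal{N}$ is defined near $\Delta$, $\mathcal{N}^{\ast}$ near $D^{\ast}$, their restrictions to a neighbourhood of $\Delta^{\ast}=\Delta\cap D^{\ast}$ are identified by $\psi$, and since $D=\Delta\cup D^{\ast}$, descent for coherent sheaves with connection along this two-element open cover produces a meromorphic flat bundle on a neighbourhood of $D$; the formal isomorphisms $\iso$ and $\iso^{\ast}$ are compatible with $\psi$ by construction of $\psi$ as a morphism of marked connections, hence glue to a formal isomorphism with $\mathcal{M}_{\hat D}$. This yields the desired $\mathcal{M}$-marked connection extending $(\mathcal{N}^{\ast},\iso^{\ast})$.

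The main obstacle I expect is keeping the markings coherent through $\pi_{+}\pi^{+}$ and through the gluing: one must be sure that the isomorphism $\psi:(\mathcal{N},\iso)_{|\Delta^{\ast}}\xrightarrow{\sim}(\mathcal{N}^{\ast},\iso^{\ast})_{|\Delta^{\ast}}$ can be chosen to respect the $\mathcal{M}$-markings and not merely the underlying connections — this is exactly what the bijectivity in Lemma \ref{bijectionettangent} guarantees, since marked connections (not just connections) are classified by the torsor, so an isomorphism of torsors over $\partial\Delta^{\ast}$ is the same as an isomorphism of marked connections over $\Delta^{\ast}$. The only subtlety beyond that is the identification near $\partial Q$ of $\St_{\pi^{+}\mathcal{M}}^{<E}$ with $\St_{\mathcal{M}}^{<\Delta}$, which holds because $\pi$ is an isomorphism over $\Delta^{\ast}$ away from the exceptional locus and the Stokes sheaf only depends on the formal germ, so $\Phi$ restricted near $\partial Q$ really is the restriction map appearing in the hypothesis.
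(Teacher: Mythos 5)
Your argument follows essentially the same route as the paper: lift the point of $H^{1}(\partial Q,\St_{\mathcal M}^{<\Delta})$ to a torsor on $\partial E$, convert it to a $\pi^{+}\mathcal M$-marked connection via Lemma \ref{bijectionettangent}, push forward by $\pi_{+}$, and use the local-system property of $R^{1}p_{\ast}\St^{<\Delta^{\ast}}_{\mathcal M}$ on $\Delta^{\ast}$ to glue with $(\mathcal N^{\ast},\iso^{\ast})$. The only imprecision is your justification that the push-forward is again a meromorphic connection with poles along $\Delta$: since $\widetilde{\mathcal N}$ is not a priori of the form $\pi^{+}M$, this is not an instance of $\pi_{+}\pi^{+}\simeq\id$ but rather Mebkhout's theorem on push-forwards of meromorphic connections under proper modifications, combined with flat base change to transport the formal marking.
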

\begin{proof}

%As seen in \ref{bijectionettangent},  the germ $(\mathcal{M}_P, \id)$ of  $(\mathcal{M}, \id)$ at $P$ and  the germ $(\mathcal{N}^{\ast}_P, \iso_P)$ of $(\mathcal{N}^{\ast}, \iso)$ at $P$ can be seen as points of $H^{1}(\partial P, \St_{\mathcal{M}}^{<D})$. On the other hand, $(\pi^{\ast}\mathcal{M}, \id)$ can be seen as a point in $H^{1}(\partial X,\St_{\pi^{\ast}\mathcal{M}}^{<D})$ mapping to   $(\mathcal{M}^{\ast}, \id)$ by $\Phi$.
%Let $(\mathcal{N}^{\ast}, \iso^{\ast})$ be a $\mathcal{M}^{\ast}$-marked connection. 
%We suppose that $(\mathcal{N}^{\ast}, \iso^{\ast})=\Phi(\mathcal{N}^{\prime}, \iso^{\prime})$ where $(\mathcal{N}^{\prime}, \iso^{\prime})$ is a $\pi^{\ast}\mathcal{M}$-marked connection. 
%Let $(\mathcal{N}^{\prime}, \iso^{\prime})$ be a $\mathds{C}$-point of $H^{1}(\partial X, \St_{\pi^{\ast}\mathcal{M}}^{<D})$ mapping to $(\mathcal{N}^{\ast}_P, \iso_P)$ via $\Phi$. 
From lemma \ref{bijectionettangent}, any $\mathds{C}$-point of $H^{1}(\partial E, \St_{\pi^{+}\mathcal{M}}^{<E})$ comes from a unique $\pi^{+}\mathcal{M}$-marked connection. Hence, there exists $(\mathcal{N}^{\prime}, \iso^{\prime})\in H^{1}(\partial E, \St_{\pi^{+}\mathcal{M}}^{<E})$ such that $\Phi(\mathcal{N}^{\prime}, \iso^{\prime})=(\mathcal{N}_{P}^{\ast}, \iso_{P}^{\ast})$. From \cite[3.6-4]{Mehbsmf}, the $\mathcal{D}$-module $\mathcal{N}:=\pi_{+}\mathcal{N}^{\prime}$ is a meromorphic connection defined in a neighbourhood of $\Delta$ and and with poles along $\Delta$. By flat base change
\begin{align*}
\mathcal{N}_{\hat{\Delta}}& \simeq \mathcal{O}_{\hat{\mathds{A}^2_{\mathds{C}} | \Delta}}\otimes R\pi_{\ast}(\mathcal{D}_{X\rightarrow \mathds{A}^2_{\mathds{C}}}\otimes\mathcal{N}^{\prime})\\
      &\simeq   R\pi_{\ast}(\mathcal{O}_{\hat{X | E}}\otimes \mathcal{D}_{X\rightarrow \mathds{A}^2_{\mathds{C}}}\otimes\mathcal{N}^{\prime}) \\
      & \simeq R\pi_{\ast}( \mathcal{D}_{X\rightarrow \mathds{A}^2_{\mathds{C}}}\otimes\mathcal{N}^{\prime}_{\hat{E}}) \\
      &\simeq \pi_{+}\mathcal{N}^{\prime}_{\hat{E}}
\end{align*}
and similarly $\mathcal{M}_{\hat{\Delta}}\simeq \pi_{+}(\pi^{+}\mathcal{M})_{\hat{E}}$. Hence, $\iso := \pi_+ \iso^{\prime}$ defines an isomorphism between $\mathcal{N}_{\hat{\Delta}}$ and $\mathcal{M}_{\hat{\Delta}}$. So $(\mathcal{N}, \iso )$ is a $\mathcal{M}$-marked connection in a neighbourhood of $\Delta$. By definition, the germ of $(\mathcal{N}, \iso )$ at $Q$ is $(\mathcal{N}_{Q}^{\ast}, \iso_{Q}^{\ast})$. Since $R^{1}p_{\Delta\ast} \St^{<\Delta^{\ast}}_{\mathcal{M}}$ is a local system on $\Delta^{\ast}$, we deduce  
$$
(\mathcal{N}, \iso )_{|\Delta^{\ast}}=(\mathcal{N}^{\ast}, \iso^{\ast})_{|\Delta^{\ast}}
$$
Hence, the gluing of $(\mathcal{N}, \iso )$ with $(\mathcal{N}^{\ast}, \iso^{\ast})$ provides the sought-after extension of $(\mathcal{N}^{\ast}, \iso^{\ast})$ into an $\mathcal{M}$-marked connection. So lemma \ref{exte} is proved.
\end{proof}

Let us now give a sufficient condition for the surjectivity of $\Phi$ in terms of the irregularity complex.

\begin{proposition}\label{smoothproper}
With the notations from \ref{extensioncriterion}, suppose furthermore that the perverse complex $\Irr_D^{\ast}\End\mathcal{M}$ is a local system on $\Delta$. Then $\Phi$ induces an isomorphism between 
each irreducible component of $H^{1}(\partial E, 
\St_{\pi^{+}\mathcal{M}}^{<E})$ and $H^{1}(\partial Q, 
\St_{\mathcal{M}}^{<\Delta}) $.
\end{proposition}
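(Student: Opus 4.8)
Write $\mathcal{X}:=H^{1}(\partial E,\St_{\pi^{+}\mathcal{M}}^{<E})$ and $\mathcal{Y}:=H^{1}(\partial Q,\St_{\mathcal{M}}^{<\Delta})$. The plan is to establish four facts: $\mathcal{Y}$ is a smooth irreducible affine scheme; $\Phi$ is a closed immersion; $\mathcal{X}$ is smooth; and the tangent map of $\Phi$ is an isomorphism at every $\mathds{C}$-point of $\mathcal{X}$. A dimension count then forces $\Phi$ to carry each irreducible component of $\mathcal{X}$ isomorphically onto $\mathcal{Y}$, which is the assertion.

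\textbf{Target and closed immersion.} Since $\pi$ resolves only the turning point $P$, it is an isomorphism over $\Delta^{\ast}$; hence $Q$ is a good point of $\mathcal{M}$ lying in the smooth locus of the divisor, so near $\partial Q$ the sheaf $\St_{\mathcal{M}}^{<\Delta}$ is the one-variable Stokes sheaf of the restriction of $\mathcal{M}$ to a transverse disc, and $\mathcal{Y}$ is an affine space by Babbitt--Varadarajan \cite{BV}; in particular $\mathcal{Y}$ is smooth, irreducible and reduced. After blowing up further we may assume $E=\pi^{-1}(\Delta)$ is a normal crossing divisor in $Y$; then $\pi^{+}\mathcal{M}$ is good and $E$ is connected. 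For a small ball $B\ni Q$ in $E$, I would factor $\Phi$ as $H^{1}(\partial E,\St_{\pi^{+}\mathcal{M}}^{<E})\to H^{1}(\partial B,\St_{\pi^{+}\mathcal{M}}^{<E})\xrightarrow{\sim}\mathcal{Y}$, the second map being an isomorphism because $\pi$ identifies $B$ with a disc in the smooth locus of $\Delta$ and $R^{1}p_{\ast}\St_{\mathcal{M}}^{<\Delta}$ is a local system there (Proposition \ref{smooth}). By Theorem \ref{cestuneimmersionfermee} the first map is a closed immersion; hence so is $\Phi$, and $\mathcal{X}$ is an affine scheme of finite type over $\mathds{C}$.

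\textbf{Smoothness of the source and the tangent map.} For the smoothness of $\mathcal{X}$ I would invoke Corollary \ref{obetIrr}: at the point attached by Lemma \ref{bijectionettangent} to a $\pi^{+}\mathcal{M}$-marked connection $(M,\tau)$, an obstruction theory is $H^{2}(E,\Irr_{E}^{\ast}\End M)$; as $M$ has the same formalization as $\pi^{+}\mathcal{M}$ it is good, so $\Irr_{E}^{\ast}\End M$ is perverse on the curve $E$ by \cite{Mehbgro}, and the hypercohomology spectral sequence kills $H^{\geq 2}$ (its $\mathcal{H}^{0}$ is supported on finitely many points and $E$ has real dimension $2$). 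Thus $\mathcal{X}$ is formally smooth, hence smooth. For the tangent map, I would argue as in Lemma \ref{exte}: $\pi_{+}M$ is a meromorphic connection near $\Delta$ and, by the base-change identities used there, $\pi_{+}\End M$ is a meromorphic connection whose formalization is canonically $\End\mathcal{M}_{\hat\Delta}$, i.e. an $\End\mathcal{M}$-marked connection. Using Lemma \ref{espaceTan}, the proper base change $R\pi_{\ast}\Irr_{E}^{\ast}\End M\simeq\Irr_{\Delta}^{\ast}(\pi_{+}\End M)$, and the local-system property near $Q$, the map $T_{(M,\tau)}\Phi$ becomes the restriction of $\mathcal{H}^{1}\Sol\End(\pi_{+}M)$ from $\Delta$ to $Q$. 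The hypothesis that $\Irr_{D}^{\ast}\End\mathcal{M}$ is a local system, together with Lemma \ref{transferthyp}, makes $\Irr_{\Delta}^{\ast}(\pi_{+}\End M)$ a local system on the disc $\Delta$, so this restriction is an isomorphism; hence $T\Phi$ is an isomorphism at every $\mathds{C}$-point of $\mathcal{X}$.

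\textbf{Conclusion and the main difficulty.} Given an irreducible component $\mathcal{X}_{0}$ of $\mathcal{X}$ and $x\in\mathcal{X}_{0}$, smoothness gives $\dim_{x}\mathcal{X}_{0}=\dim T_{x}\mathcal{X}=\dim T_{\Phi(x)}\mathcal{Y}=\dim\mathcal{Y}$; since $\Phi$ is a closed immersion and $\mathcal{Y}$ is irreducible of that dimension, $\Phi(\mathcal{X}_{0})=\mathcal{Y}$, and both being reduced, $\Phi|_{\mathcal{X}_{0}}\colon\mathcal{X}_{0}\to\mathcal{Y}$ is an isomorphism (so $\mathcal{X}$ is in fact irreducible and $\Phi$ is an isomorphism). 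I expect the main difficulty to be the identification of $T_{(M,\tau)}\Phi$ with that restriction map: this requires the compatibility of the irregularity complex with $\pi_{+}$ via proper base change and the verification that $\pi_{+}\End M$ is $\End\mathcal{M}$-marked, so that Lemma \ref{transferthyp} applies; a secondary point is the vanishing $H^{2}(E,\Irr_{E}^{\ast}\End M)=0$, which relies on the fact that after resolution $E$ is a curve on which Mebkhout's perversity theorem applies.
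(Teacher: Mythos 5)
Your overall strategy is the paper's: show $\Phi$ is a closed immersion (via Theorem \ref{cestuneimmersionfermee}), show the source is smooth via the obstruction theory of Corollary \ref{obetIrr}, show the tangent map is an isomorphism using Lemma \ref{espaceTan}, compatibility of $\Irr^{\ast}$ with proper push-forward and Lemma \ref{transferthyp}, and conclude by comparing with the Babbitt--Varadarajan affine space. The tangent-map step and the endgame (your dimension count versus the paper's remark that affine spaces in characteristic $0$ have no non-trivial finite étale covers) are fine.

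There is, however, a genuine gap in your smoothness argument. You claim that $H^{2}(E,\Irr_{E}^{\ast}\End M)=0$ follows from perversity of $\Irr_{E}^{\ast}\End M$ on the curve $E$ together with $\dim_{\mathds{R}}E=2$. This is false. In the paper's normalization $\Irr_{E}^{\ast}\End M$ is concentrated in degrees $1$ and $2$, with $\mathcal{H}^{1}$ a constructible sheaf without punctual sections and $\mathcal{H}^{2}$ punctually supported; the hypercohomology spectral sequence therefore feeds $H^{2}(E,\Irr_{E}^{\ast}\End M)$ from $H^{1}(E,\mathcal{H}^{1})$ and $H^{0}(E,\mathcal{H}^{2})$, and neither term vanishes for a general perverse complex: $E=\pi^{-1}(\Delta)$ contains the compact exceptional curves of the resolution, on which $H^{1}$ of a constructible sheaf (typically $j_{!}$ of a local system off the crossing points) is non-zero, and perversity does not force $\mathcal{H}^{2}=0$. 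Had your argument worked, the moduli $H^{1}(\partial E,\St_{\pi^{+}\mathcal{M}}^{<E})$ would be smooth unconditionally, making the hypothesis on $\Irr_{\Delta}^{\ast}\End\mathcal{M}$ superfluous for this step — a warning sign. The correct argument is the one the paper gives: by compatibility of the irregularity complex with proper push-forward, $H^{2}(E,\Irr_{E}^{\ast}\End M)\simeq H^{2}(\Delta,\Irr_{\Delta}^{\ast}\pi_{+}\End M)$, and Lemma \ref{transferthyp} applied to the $\End\mathcal{M}$-marked connection $(\pi_{+}\End M,\pi_{+}\iso)$ — this is where the hypothesis enters — shows that $\Irr_{\Delta}^{\ast}\pi_{+}\End M$ is a local system concentrated in degree $1$ on the contractible disc $\Delta$, whence the $H^{2}$ vanishes. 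You should route the vanishing through $\Delta$ exactly as you already do for the tangent map.
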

\begin{proof}
From \cite{BV}, we know that $H^{1}(\partial Q, \St_{\mathcal{M}}^{<\Delta})$ is an affine space. Since affine spaces in characteristic $0$ have no non trivial finite étale covers, it is enough to prove that $\Phi$ is finite étale. From Theorem \ref{cestuneimmersionfermee}, the morphism $\Phi$ is a closed immersion. We are thus left to show that $\Phi$ is étale. \\ \indent
Etale morphisms between smooth schemes of finite type over $\mathds{C}$ are those morphisms inducing isomorphisms on the tangent spaces. Hence, we are left to prove that $H^{1}(\partial E,    \St_{\pi^{+}\mathcal{M}}^{<E})$ is smooth and that $\Phi$ induces isomorphisms on the tangent spaces. Let
$(M, \iso)$ be a $\pi^{+}\mathcal{M}$-marked connection. From corollary \ref{obetIrr}, an obstruction theory to lifting infinitesimally the Stokes torsor of $(M, \iso)$ is given by 
\begin{equation}\label{van}
H^{2}(E , \Irr^{\ast}_E \End M)  \simeq 
H^2(  \Delta ,  \Irr_{D}^{\ast} \pi_+\End M    ) \simeq 0
\end{equation}
The first identification expresses the  compatibility of irregularity with proper push-forward. Furthermore, from lemma \ref{transferthyp} applied to the $\End\mathcal{M}$-marked connection $(\pi_+\End M   , \pi_+ \iso)$, the perverse complex $\Irr_{D}^{\ast} \pi_+\End M $   
is a local system concentrated in degree 1. This implies the vanishing \eqref{van}. Hence, $H^{1}(\partial E,    \St_{\pi^{+}\mathcal{M}}^{<E})$ is smooth at $(M, \iso)$. From lemma \ref{bijectionettangent}, any $\mathds{C}$-point of  $H^{1}(\partial E,    \St_{\pi^{+}\mathcal{M}}^{<E})$ is of the form $(M, \iso)$. Thus, $H^{1}(\partial E,    \St_{\pi^{+}\mathcal{M}}^{<E})$ is smooth. Furthermore, we have a commutative diagram
$$
\xymatrix{
T_{(M, \iso)} H^{1}(\partial E, \St_{\pi^{+}\mathcal{M}}^{<E}) \ar[r] \ar[d]_-{\wr}  & T_{(M_Q , \iso_Q)} H^{1}(\partial Q, \St_{\mathcal{M}}^{<\Delta})  \ar[d]^-{\wr}  \\
H^1( E ,  \Irr_{E}^{\ast} \End M      ) \ar[r] \ar[d]_-{\wr}  & (\mathcal{H}^1\Irr_{D}^{\ast} \End M)_Q  \ar[d]^-{|}  \\
H^1(  \Delta ,  \Irr_{D}^{\ast} \pi_+\End M    ) \ar[r] \ar[d]_-{\wr}&  (\mathcal{H}^1\Irr_{D}^{\ast} \End M  )_Q  \ar[d]^{|}\\
H^0(  \Delta ,  \mathcal{H}^1\Irr_{D}^{\ast} \pi_+\End M     ) \ar[r]&   (\mathcal{H}^1\Irr_{D}^{\ast} \End M  )_Q 
}
$$
The first vertical maps are isomorphisms by lemma \ref{espaceTan}. As already seen,  $\Irr_{D}^{\ast} \pi_+\End M$  is a local system concentrated in degree $1$. Hence, the last vertical and the  bottom arrows are isomorphisms. Thus, the tangent map of $\Phi$ at $(M, \iso)$ is an isomorphism. This finishes the proof of proposition \ref{smoothproper}.
%From works of Babbitt-Varadarajan \cite{BV}, the scheme $H^{1}(\partial P, \St_{\mathcal{M}}^{<D})$ is an affine space. In particular, it is smooth. By assumption, $H^{1}(\partial X, \St_{\pi^{\ast}\mathcal{M}}^{<D})$ is smooth. Hence, $\Phi$ is an étale morphism in a neighbourhood of $(\pi^{\ast}\mathcal{M}, \id)$. In particular, the image of $\Phi$ contains an open set in $H^{1}(\partial X, \St_{\pi^{\ast}\mathcal{M}}^{<D})$. Since $H^{1}(\partial  X, \St_{\pi^{\ast}\mathcal{M}}^{<D})$ is irreducible, the image of $\Phi$ is dense. From \ref{immersionferme}, we know that the image of $\Phi$ is also closed. Hence, $\Phi$ is surjective.\\ \indent

\end{proof}

\subsection{Proof of Theorem 1}
Let $X$ be a smooth complex algebraic variety. Let $D$ be a smooth divisor in $X$. Let $\mathcal{M}$ be an algebraic meromorphic connection with poles along $D$.  \\ \indent
We first prove the direct inclusion in Theorem 1. Suppose that $\mathcal{M}$ has good formal decomposition at a closed point $P\in D$. Since the good formal decomposition locus of $\mathcal{M}$ is open in $D$ \cite{Andre}, we can suppose at the cost of restricting the situation that $\mathcal{M}$ has good formal decomposition along $D$. By Mebkhout's theorem \cite{Mehbgro}, the complexes $\Irr_{D}^{\ast} \mathcal{M}$ and $\Irr_{D}^{\ast} \End\mathcal{M}$ are perverse. To prove that they are local systems on $D$, it is thus enough to prove that their local Euler Poincaré characteristic is constant. From the local index theorem \cite{Kajap}\cite{Bouka}, the local Euler Poincaré characteristic of $\Irr_{D}^{\ast} \mathcal{M}$ depends only on the characteristic cycle of $\mathcal{M}$. Since the characteristic cycle of $\mathcal{M}$ depends only on $\mathcal{M}$ via $\mathcal{M}_{\hat{D}}$, we are reduced to treat the case where $\mathcal{M}=\mathcal{E}^a\otimes \mathcal{R}$ where $a\in \mathcal{O}_X(\ast D)$ is good and where $\mathcal{R}$ is a regular singular meromorphic connection with poles along $D$. 
Since $\Irr_{D}^{\ast}$ is exact, we can suppose further that the rank of $\mathcal{R}$ is one. In that case, a standard computation shows that the characteristic cycle of $\mathcal{M}$ is supported on the union of $T_X^{\ast}X$ with $T_D^{\ast}X$. Hence, any smooth transverse curve to $D$ is non-characteristic for $\mathcal{M}$. Let $P\in D$ and let $C$ be a smooth transverse curve to $D$ passing through $P$. From \cite{TheseKashiwara}, we have
$$
(\Irr_{D}^{\ast} \mathcal{M})_P \simeq \Irr_{P}^{\ast} \mathcal{M}_{|C}  \simeq \mathds{C}^{\ord_D a}[-1]
$$
Hence, the local Euler-Poincaré characteristic of $\Irr_{D}^{\ast} \mathcal{M}$ is constant and similarly for $\Irr_{D}^{\ast} \End\mathcal{M}$. This finishes the proof of the direct inclusion in Theorem 1.\\ \indent
We now prove the converse inclusion in Theorem 1. From lemma \ref{reductionDim2}, we can suppose that $X$ is a surface. Let $P\in D$ such that $\Irr_{D}^{\ast} \mathcal{M}$ and $\Irr_{D}^{\ast} \End\mathcal{M}$ are local systems in a neighbourhood of $P$ in $D$. At the cost of taking local coordinates around $P$, we can suppose that $D$ is an open subset of a  hyperplane in $\mathds{A}^2_{\mathds{C}}$. Put $D^{\ast}:=D\setminus \{P\}$. Let $\mathcal{M}^{\ast}$ be the restriction of $\mathcal{M}$ to a small neighbourhood of $D^{\ast}$ in $X$. Let $(\mathcal{N}^{\ast}, \iso^{\ast})$ be the $\mathcal{M}^{\ast}$-marked connection defined in \ref{extformalmodel}. Such a connection exists at the cost of replacing $X$ by a small enough neighbourhood of $P$ in $X$. From proposition \ref{sietendbon}, we are left to show that $(\mathcal{N}^{\ast}, \iso^{\ast})$ extends into a $\mathcal{M}$-marked connection. Let $\Delta$ be a small enough disc in $D$ containing $P$ such that $\Irr_{D}^{\ast} \mathcal{M}$ and $\Irr_{D}^{\ast} \End\mathcal{M}$ are local systems on $\Delta$.  Put $\Delta^{\ast}:=\Delta\setminus \{P\}$. Let $\pi : Y\longrightarrow X$ be a resolution of turning points for $\mathcal{M}$ at $P$. Set $E:=\pi^{-1}(\Delta)$ and pick $Q\in \Delta^{\ast}$. Let 
$$
\xymatrix{
\Phi : H^{1}(\partial E, \St_{\pi^{+}\mathcal{M}}^{<E})            \ar[r] & H^{1}(\partial Q, \St_{\mathcal{M}}^{<\Delta})       
}
$$
be the restriction morphism of Stokes torsors. From lemma \ref{exte}, to prove that $(\mathcal{N}^{\ast}, \iso^{\ast})$ extends into a $\mathcal{M}$-marked connection, it is enough to prove that $(\mathcal{N}^{\ast}_Q, \iso^{\ast}_Q)$  lies in the image of $\Phi$. This is indeed the case by lemma \ref{smoothproper}, which finishes the proof of Theorem $1$.

\section{A boundedness theorem for turning points}

\subsection{Nearby slopes}
Let $X$ be a smooth complex algebraic variety and let $\mathcal{M}$ be an holonomic  $\mathcal{D}_X$-module. Let $f\in \mathcal{O}_X$ be a non constant function. Let $\psi_f$ be the nearby cycle functor associated to $f$ \cite{Kashpsi}\cite{Malpsi}\cite{MS}\cite{MM}. Following \cite{NearbySlope}, we recall that the \textit{nearby slopes of $\mathcal{M}$ associated to $f$} are the rational numbers $r\in \mathds{Q}_{\geq 0}$ such that there exists a germ $N$ of meromorphic connection at $0\in \mathds{A}^1_{\mathds{C}}$ with slope $r$ such that 
\begin{equation}\label{annulation0}
\psi_{f}(\mathcal{M}\otimes f^{+}N)\neq 0
\end{equation}
We denote by $\Sl_{f}^{\nb}(\mathcal{M})$ the set of nearby slopes of $\mathcal{M}$ associated to $f$. In dimension $1$, the nearby slopes of $\mathcal{M}$ associated to a local coordinate centred at a point $0$ are the usual slopes of $\mathcal{M}$  at $0$. See \cite[3.3.1]{NearbySlope} for a proof. In general, the set $\Sl_{f}^{\nb}(\mathcal{M})$  is finite \cite{DeligneLettreMalgrange}.
%The \textit{nearby slopes of $\mathcal{M}$} are the elements of the set 
%$$
%\Sl^{\nb}(\mathcal{M}):=\bigcup_{f\in \mathcal{O}_X} \Sl_{f}^{\nb}(\mathcal{M})
%$$
%The set $\Sl^{\nb}(\mathcal{M})$ is infinite in general. The main result of \cite{NearbySlope}  states that $\Sl^{\nb}(\mathcal{M})$ is bounded.\\ \indent
%\footnote{In \cite{NearbySlope}, the main result is a \textit{local} boundedness statement  since resolution of turning points can only be achieved locally in the analytic context. We work here in the algebraic context, where resolution of turning points can be achieved globally. So boundedness holds globally.}. 
If $\mathcal{M}$ is a meromorphic connection, an explicit bound for $\Sl_{f}^{\nb}(\mathcal{M})$ is given in \cite{NearbySlope} in terms of a resolution of turning points of $\mathcal{M}$. This bound behaves poorly with respect to restriction. We will need a sharper bound in the case where $f$ is a smooth morphism. It will be provided by the following more general proposition.
\begin{proposition}\label{sharpbound}
Let $\mathcal{M}$ be a germ of meromorphic connection at $0\in \mathds{A}^n$ with poles along the  divisor $D$ given by $f:=x_1 \cdots x_d =0$. Let $r_i$ be the  highest generic slope of $\mathcal{M}$  along $x_i=0$. Put $r_D(\mathcal{M})=\Max \{r_1, \dots , r_d\}$. Then, 
$$
\Sl_{f}^{\nb}(\mathcal{M})\subset [0, r_D(\mathcal{M})]
$$
\end{proposition}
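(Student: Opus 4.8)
The plan is to reduce the statement to a one‑variable assertion about the slopes of the restriction of $\mathcal{M}$ along generic hyperplanes, using the compatibility of nearby cycles with non‑characteristic restriction. First I would recall from \cite[3.3.1]{NearbySlope} that in dimension $1$ the nearby slopes of a meromorphic connection at a point are exactly its usual slopes, so the inequality to be proved amounts to: any $N$ with slope $r>r_D(\mathcal{M})$ satisfies $\psi_f(\mathcal{M}\otimes f^+N)=0$. The key point is that $f=x_1\cdots x_d$ is smooth away from the coordinate hyperplanes' intersections, and more importantly the map $f$ factors, after the blow‑up producing the normal crossing picture, through a situation where one can apply a Thom--Sebastiani / Künneth type decomposition for nearby cycles along a product of coordinates. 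So I would first treat the case $d=1$, where $f=x_1$ is a submersion onto $\mathds{A}^1$ away from nothing, and then bootstrap to general $d$.

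For $d=1$: here $\mathcal{M}$ is a meromorphic connection with poles along the smooth divisor $x_1=0$, and $f=x_1$ is the defining equation. The nearby cycle $\psi_{x_1}(\mathcal{M}\otimes x_1^+N)$ can be computed fibrewise: its support and non‑vanishing are governed, via the (well‑known) specialisation of the irregularity/slope filtration, by whether the twisted connection $\mathcal{M}\otimes x_1^+N$ still has an irregular part that "sees" $N$. Concretely, writing the formal decomposition of $\mathcal{M}$ along $x_1=0$ at a generic point of the divisor as $\bigoplus \mathcal{E}^{a}\otimes\mathcal{R}_a$ with $\ord_{x_1}(a)\le r_1$, and $N$ formally of the shape $\mathcal{E}^{b}\otimes(\text{regular})$ with $\ord_{x_1}(b)=-r<-r_1$, the slopes of $a - b$ along $x_1=0$ equal $r>r_1$ for every $a$, hence every summand of $\mathcal{M}\otimes x_1^+N$ is purely irregular along $x_1=0$ with slope $r>0$; such a connection has vanishing nearby cycles along $x_1$ (this is the one‑variable statement that irregular connections contribute nothing to $\psi$ at the origin once the slope of the twist dominates). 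The main obstacle is making this fibrewise/generic argument uniform: one needs that $r_1$ is indeed the \emph{highest generic} slope and that the slope of $a-b$ is controlled at \emph{every} point of the divisor, not just generically — but since $\ord_{x_1}(b)$ is constant and $\ord_{x_1}(a)\le r_1$ generically while $N$‑twisting only adds, the dominance $\ord_{x_1}(a-b)=\ord_{x_1}(b)=-r$ is forced everywhere, so this is manageable.

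For general $d$: I would use that $\psi_{x_1\cdots x_d}$ can be built iteratively, or rather that a nonzero class in $\psi_f(\mathcal{M}\otimes f^+N)$ would, after pulling back to the real blow‑up along the full normal crossing divisor and applying Mochizuki's/Kedlaya's structure theory (goodness is automatic after resolution, but here the divisor is already normal crossing so one only needs the asymptotic decomposition), localise onto one of the coordinate hyperplanes, say $x_1=0$, reducing to the $d=1$ computation with $r_1\le r_D(\mathcal{M})<r$. More precisely, $f^+N$ has, along $x_i=0$, slope equal to $\mathrm{slope}(N)=r$ for each $i$ where $\alpha_i>0$ in the local model, so $\mathcal{M}\otimes f^+N$ is irregular of slope $>r_i$ along each $x_i=0$; then one invokes the vanishing of nearby cycles of a connection along $f$ when it is "totally irregular" relative to $f$ with all slopes exceeding the relevant generic slopes — this is exactly the kind of statement that the finiteness result \cite{DeligneLettreMalgrange} together with the explicit (but coarse) bound in \cite{NearbySlope} already establishes in weaker form, and here one just needs the sharp constant $r_D(\mathcal{M})$, which comes out of tracking that the twist adds the same slope $r$ along every component. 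The hard part, and where I would spend the most care, is the reduction step from the $d$‑dimensional $\psi_f$ to the one‑dimensional situation without losing the sharp bound: a naive iteration of nearby cycles along $x_1$, then $x_2$, etc., changes the connection at each step and could in principle degrade the slope bound, so instead I would argue directly on $\tilde X$, the real blow‑up, that every local section of the relevant de Rham complex with coefficients in $\mathcal{A}$ that could contribute to $\psi_f$ must already be rapidly decaying once $r>r_D(\mathcal{M})$, killing the class.
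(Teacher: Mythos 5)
Your proposal does not go through as written; it has two genuine gaps, and in both cases the missing ingredient is exactly what the paper supplies by a completely different (algebraic) mechanism. First, in the $d=1$ step you deduce vanishing of $\psi_{x_1}(\mathcal{M}\otimes x_1^+N)$ from the formal decomposition $\bigoplus\mathcal{E}^a\otimes\mathcal{R}_a$ and the fact that every $a-b$ has slope $r>r_1$. But that decomposition only exists at \emph{generic} points of the divisor, whereas $\psi_f(\mathcal{M}\otimes f^+N)$ is a complex on all of $f^{-1}(0)$ and its stalks at turning points are precisely the issue: the coarse bound of \cite{NearbySlope} is stated in terms of a resolution of turning points for exactly this reason, and the whole content of the sharp bound is to control those stalks without resolving. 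Declaring the uniformity ``manageable'' because $\ord_{x_1}(a-b)=\ord_{x_1}(b)$ is forced does not address this: at a turning point there is no $a$, no decomposition, and no a priori relation between the generic slope data and the $V$-filtration/$b$-function at that point. Second, the reduction from general $d$ to $d=1$ is not supplied by any existing statement: there is no Thom--Sebastiani or K\"unneth formula expressing $\psi_{x_1\cdots x_d}$ in terms of the $\psi_{x_i}$ in the generality needed, iterating nearby cycles replaces $\mathcal{M}$ by a different object at each step (as you yourself note, this can degrade the bound), and the fallback of ``arguing on the real blow-up that contributing sections are rapidly decaying'' is not a computation of $\psi_f$ --- moderate-growth/rapid-decay de Rham complexes compute $\DR$ and $\Sol$, not nearby cycles.

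For comparison, the paper avoids both problems by working algebraically and uniformly over all of $D$ at once: it reduces (Galois plus formal invariance) to $N=t^\alpha\mathcal{E}^{1/t^r}$, produces via the Deligne--Malgrange lattice a coherent generating submodule $F$ with $\mathcal{M}=F(\ast D)$ that is stable under the operators $f^{r_D(\mathcal{M})}x_i\partial_{x_i}$ (Lemma \ref{malgrange}; the turning points are absorbed there by the Hartogs-type characterization of the lattice), and then shows by a short explicit computation on the graph embedding that the twisted generators $f^\alpha e^{1/f^r}e_i\,\delta$ lie in $V_{-1}(\mathcal{D}_{\mathds{C}^n\times\mathds{C}_t})\iota_+F$ as soon as $r>r_D(\mathcal{M})$, which forces $\psi_f(\mathcal{M}\otimes f^+N)=0$. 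If you want to rescue your approach, the step you must actually prove is a statement at the non-generic points, and the $V$-filtration argument is the standard way to get it.
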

\begin{proof}
To prove proposition \ref{sharpbound}, take $r>r_D(\mathcal{M})$ and let $N$ be a  germ of meromorphic connection at $0\in \mathds{A}^1_{\mathds{C}}$ with slope $r$. We want to show the vanishing \eqref{annulation0} in a neigbourhood of $0$. By a standard Galois argument, one reduces to the case where $r$ and the $r_i, i=1, \dots , d$ are integers. Since $\psi_f$ is a formal invariant, we can further suppose that $N=t^\alpha \mathcal{E}^{1/t^r}$ where $\alpha \in \mathds{C}$. Let us accept for a moment that $\mathcal{M}$ is generated as a  $\mathcal{D}_X$-module by a coherent $\mathcal{O}_X$-submodule $F$ stable by 
$f^{r_D(\mathcal{M})}x_i\partial_{x_i}$, $i=1, \dots , d$ and such that $\mathcal{M}=F(\ast D)$. Let $(e_1, \dots ,e_N)$ be a generating family for $F$ in a neighbourhood of $0$. Then, the $ f^{\alpha}e^{1/f^r}e_i$, $i=1, \dots, N$ generate $\mathcal{M}\otimes f^{+}N$ as a $\mathcal{D}_X$-module. Let $\iota :  \mathds{C}^n\longrightarrow \mathds{C}^n\times \mathds{C}_t$ be the graph of $f$. Set $\delta:=\delta(t-f)$. Then, the $s_i= f^{\alpha}e^{1/f^r}e_i \delta$, $i=1, \dots, N$ generate $\iota_+ (\mathcal{M}\otimes f^{+}N)$. To show that the germ of $\psi_{f}(\mathcal{M}\otimes f^{+}N)$ at $0$ vanishes, we are thus left to prove that $s_i$ belongs to $V_{-1}(\mathcal{D}_{\mathds{C}^n\times \mathds{C}_t})\iota_+F$ for every $i=1, \dots, N$, where $V_{\bullet}(\mathcal{D}_{\mathds{C}^n\times \mathds{C}_t})$ is the Kashiwara-Malgrange filtration on $\mathcal{D}_{\mathds{C}^n\times \mathds{C}_t}$.
For $i=1, \dots, N$, we have 
$$
f^{r_D(\mathcal{M})}x_1\partial_{x_1}s_i = f^{r_D(\mathcal{M})}(\alpha -\frac{r}{f^r})s_i +\sum_{j=1}^{d} g_js_j -f^{r_D(\mathcal{M})+1}\partial_t s_i  ,   \hspace{1cm} g_j\in \mathcal{O}_X
$$
Hence,
$$
r s_i =  \alpha t^r s_i + t^{r-r_D(\mathcal{M}) }\sum_{j=1}^{d} g_js_j - f^r x_1\partial_{x_1}s_i - f^{r+1}\partial_t s_i
$$
Since $r>r_D(\mathcal{M})$, we have 
$$
t^{r-r_D(\mathcal{M}) }\sum_{j=1}^{d} g_js_j \in V_{-1}(\mathcal{D}_{\mathds{C}^n\times \mathds{C}_t})\iota_+F
$$
Note furthermore that
$$
f^r x_1\partial_{x_1} s_i =  x_1\partial_{x_1} f^r s_i - r f^r s_i = t^r( x_1\partial_{x_1} -r)s_i     \in V_{-1}(\mathcal{D}_{\mathds{C}^n\times \mathds{C}_t}) s_i
$$
and that 
$$
f^{r+1}\partial_t s_i =  \partial_t t^{r+1} s_i = (r+1)t^r s_i + t^r t\partial_t s_i \in V_{-1}(\mathcal{D}_{\mathds{C}^n\times \mathds{C}_t}) s_i
$$
Hence, $s_i\in V_{-1}(\mathcal{D}_{\mathds{C}^n\times \mathds{C}_t})\iota_+F$, which proves the sought-after vanishing. We are thus left to prove the lemma \ref{malgrange} below.
\end{proof}
\begin{lemme}\label{malgrange}
Let $\mathcal{M}$ be a germ of meromorphic connection at $0\in \mathds{A}^n_{\mathds{C}}$ with poles along the  divisor $D$ given by $f:=x_1, \cdots x_d =0$. Let $r_i$ be the  highest generic slope of $\mathcal{M}$  along $x_i=0$. Suppose that the $r_i$ are integers and put $r_D(\mathcal{M})=\Max \{r_1, \dots , r_d\}$. Then,  $\mathcal{M}$ is generated as a  $\mathcal{D}_X$-module by a coherent $\mathcal{O}_X$-submodule $F$ stable by 
$f^{r_D(\mathcal{M})} x_i\partial_{x_i}$ for every $i=0, \dots, d$ and such that $\mathcal{M}=F(\ast D)$. 
\end{lemme}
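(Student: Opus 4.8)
The statement is local at $0$, where $\mathcal{M}$ is a locally free $\mathcal{O}_X(\ast D)$-module of finite rank; write $\Theta_i:=f^{r_D(\mathcal{M})}x_i\partial_{x_i}$. The plan is to reduce the statement to the existence of a suitable $\mathcal{O}_X$-lattice and then to produce that lattice from the good formal structure available after resolution of turning points. For the reduction: it suffices to find a coherent $\mathcal{O}_X$-submodule $F\subseteq\mathcal{M}$ with $\mathcal{M}=F(\ast D)$ such that $\nabla_{x_i\partial_{x_i}}(F)\subseteq x_i^{-r_i}F$ for every $i=1,\dots,d$, i.e. a lattice along which the logarithmic part of $\nabla$ has pole of order at most the generic slope $r_i$ along $D_i$. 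Indeed $\Theta_i=\big(\prod_{j\ne i}x_j^{r_D(\mathcal{M})}\big)\,x_i^{r_D(\mathcal{M})-r_i}\cdot\big(x_i^{r_i}x_i\partial_{x_i}\big)$ with $r_D(\mathcal{M})\ge r_i$, so such an $F$, being an $\mathcal{O}_X$-module, is automatically $\Theta_i$-stable; and it generates $\mathcal{M}$ over $\mathcal{D}_X$ because $\mathcal{D}_XF$ agrees with $\mathcal{M}$ off $D$ while a meromorphic connection has no non-zero $\mathcal{D}_X$-quotient supported on $D$.

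To construct the lattice I would use good formal structure after resolution. By Kedlaya \cite{Kedlaya2} and Mochizuki \cite{Mochizuki1} there is a projective modification $\pi\colon Y\to X$, an isomorphism over $X\setminus D$, with $E:=\pi^{-1}(D)_{\reduit}$ a normal crossing divisor and $\pi^{+}\mathcal{M}$ good along $E$. For a good meromorphic connection a good lattice is standard --- locally, after a ramified cover, $\pi^{+}\mathcal{M}\simeq\bigoplus_\alpha\mathcal{E}^{a_\alpha}\otimes\mathcal{R}_\alpha$ with $\mathcal{R}_\alpha$ regular, and one descends $\bigoplus_\alpha\mathcal{E}^{a_\alpha}\otimes(\text{Deligne--Malgrange lattice of }\mathcal{R}_\alpha)$, cf. \cite{Reseaucan} --- giving a locally free lattice $\tilde F$ of $\pi^{+}\mathcal{M}$ along which the logarithmic de Rham complex has pole of order $\lceil s_j\rceil$ along each component $E_j$ of $E$, $s_j$ being the generic slope of $\pi^{+}\mathcal{M}$ along $E_j$. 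I would then check that $(\pi^{*}f)^{r_D(\mathcal{M})}\pi^{*}(x_i\partial_{x_i})$ preserves $\tilde F$, using that $\pi^{*}(x_i\partial_{x_i})$ is logarithmic along the strict transforms of the $D_k$ and has at most simple poles along the exceptional divisors, while $\pi^{*}f$ vanishes to order $m_j:=\ord_{E_j}\pi^{*}D\ge 1$ along $E_j$.

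Finally $F:=\pi_{*}\tilde F$ should be the desired lattice downstairs: it is $\mathcal{O}_X$-coherent ($\pi$ proper), it satisfies $\mathcal{M}=F(\ast D)$ because $\pi^{+}\mathcal{M}=\bigcup_N(\pi^{*}f)^{-N}\tilde F$ and $\pi_{*}\pi^{+}\mathcal{M}=\mathcal{M}$ by flat base change along $\pi$ (as in the proof of lemma \ref{exte}), and it is $\Theta_i$-stable because $\pi_{*}$ intertwines the $\mathcal{D}_X$-actions under the identification $\mathcal{M}=\pi_{*}\pi^{+}\mathcal{M}$.

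The hard part will be the pole-order comparison on $Y$: one must ensure that, along every $E_j$, the vanishing order of $(\pi^{*}f)^{r_D(\mathcal{M})}$ beats the pole order that $\pi^{*}(x_i\partial_{x_i})$ forces on $\tilde F$ --- in essence the inequality $s_j\le r_D(\mathcal{M})\,m_j$, plus the extra unit of pole contributed by the exceptional locus. This is exactly where it matters that $\mathcal{M}$ is a genuine meromorphic, not merely formal, connection: the generic irregular values of $\mathcal{M}$ along the components of $D$ are honest (multivalued) meromorphic functions whose polar divisor is contained in $r_D(\mathcal{M})\,D$ by definition of $r_D(\mathcal{M})$, hence have polar order at most $r_D(\mathcal{M})\,m_j$ along $E_j$ after pull-back; and the slopes $s_j$ of $\pi^{+}\mathcal{M}$ are bounded by these, since at the generic point of $E_j$ the Levelt--Turrittin decomposition of $\pi^{+}\mathcal{M}$ refines the pull-back of that of $\mathcal{M}$ and a refinement cannot increase polar orders. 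Making this bookkeeping precise --- keeping exact track of the poles of $\pi^{*}(x_i\partial_{x_i})$ along the exceptional components and, if necessary, replacing $\pi$ by a finer modification --- is the technical heart of the argument; the integrality hypothesis on the $r_i$ enters only to guarantee that $f^{r_D(\mathcal{M})}$ is a bona fide regular function to begin with.
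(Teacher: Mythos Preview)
Your reduction to finding a lattice with the pole bound $\nabla_{x_i\partial_{x_i}}(F)\subseteq x_i^{-r_i}F$ is the right idea, but the argument has two genuine gaps.

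First, the generation claim is false as stated. A meromorphic connection \emph{can} have a nonzero $\mathcal{D}_X$-quotient supported on $D$: already for $X=\mathds{A}^1$, $D=\{0\}$, $\mathcal{M}=\mathcal{O}_X[x^{-1}]$ with the trivial connection, one has $\mathcal{D}_X\cdot\mathcal{O}_X=\mathcal{O}_X$, so $\mathcal{M}/\mathcal{D}_X\cdot\mathcal{O}_X$ is the nonzero $\delta$-module at $0$. The fix is easy and is what the paper does: replace $F$ by $f^{-k}F$ for $k\gg 0$ and invoke noetherianity of holonomic modules; a direct Leibniz computation shows $\Theta_i$-stability is preserved under this replacement.

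Second, and more seriously, the pole-order comparison on $Y$ is left as the ``hard part'' and it is genuinely delicate. Your own allowance of a simple pole for $\pi^*(x_i\partial_{x_i})$ along exceptional components is the problem: once you grant it, the needed inequality becomes $s_j+1\le r_D(\mathcal{M})\,m_j$, and this can fail. For instance $\mathcal{M}=\mathcal{E}^{1/(xy)}$ already saturates $s_j=r_D(\mathcal{M})\,m_j$ after one blowup of the origin; a further blowup at a non-torus-fixed point of the exceptional curve --- where, as one checks, $x\partial_x$ does acquire a simple pole --- would then violate the inequality. Whether the specific Kedlaya--Mochizuki tower always avoids such centers is not something you establish, and ``replacing $\pi$ by a finer modification'' goes in the wrong direction.

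The paper's route is much shorter and bypasses resolution of turning points entirely. It takes $E$ to be the Deligne--Malgrange lattice of \cite{Reseaucan} on $X$ itself, sets $F=f^{-k}E$, and reduces to showing $E$ is $\Theta_i$-stable. The key point is that membership in $E^{\an}$ is determined by restriction to the complement $U$ of the singular locus of $D$ together with the turning locus of $\mathcal{M}$; on $U$ the divisor is smooth and $\mathcal{M}$ has good formal structure, so after passing to the unramified case and then to the formal completion (faithfully flat), $\mathcal{M}$ splits as $\bigoplus_a\mathcal{E}^a\otimes\mathcal{R}_a$ with $\ord_D a\le r_D(\mathcal{M})$, and $E$ is a direct sum of Deligne lattices in the $\mathcal{R}_a$ --- from which the stability is immediate. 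In short: the paper checks the lattice condition \emph{generically} and uses the reflexivity property of the Deligne--Malgrange lattice to propagate it across the bad locus, rather than constructing the lattice from a resolution upstairs.
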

\begin{proof}
Let $E$ be a lattice in $\mathcal{M}$ as constructed by Malgrange in \cite{Reseaucan}. By construction, $\mathcal{M}=E(\ast D)$. Since holonomic $\mathcal{D}_X$-modules are noetherian, $\mathcal{M}= \mathcal{D}_X f^{-k} E$ for $k$ big enough. Let us show that $F=f^{-k} E$ fits our purpose. For $m\in E$, we have
$$
f^{r_D(\mathcal{M})}x_i\partial_{x_i}(f^{-k} m)=-k f^{r_D(\mathcal{M})}(f^{-k}m)+f^{-k}(f^{r_D(\mathcal{M})}x_i\partial_{x_i}m)
$$
Hence, it is enough to show that $E$ is stable by $f^{r_D(\mathcal{M})}x_i\partial_{x_i}$, $i=0, \dots, d$. Since $\mathcal{O}_{X^{\an},x}$ is  faithfully flat over $\mathcal{O}_{X,x}$ for every $x\in D$, we have $E= \mathcal{M}\cap E^{\an}$ in $\mathcal{M}^{\an}$. Hence, it is enough to show that $E^{\an}$ is stable by $f^{r_D(\mathcal{M})}x_i\partial_{x_i}$, $i=0, \dots, d$. Let $j : U\longrightarrow X^{\an}$ be the complement in $X$ of the union of the singular locus of $D$ with the turning locus of $\mathcal{M}$. By construction of the Deligne-Malgrange lattices, a section of $\mathcal{M}$ belongs to $E^{\an}$ if and only if its restriction to $U$ belongs to $E^{\an}_{|U}$. Hence, we can suppose that $D$ is smooth and that $\mathcal{M}$ has good formal structure along $D$. We can further suppose that $\mathcal{M}$ is unramified along $D$. Since $\mathcal{O}_{X^{\an},\hat{D}}$ is  faithfully flat over $\mathcal{O}_{X^{\an}|D}$, we can suppose that  $\mathcal{M}$ splits, that is 
$$
\mathcal{M}=\bigoplus_{a\in \mathcal{O}_{X^{\an}}(r_D(\mathcal{M})D)}\mathcal{E}^{a} \otimes \mathcal{R}_a
$$
where the $\mathcal{R}_a$ are regular meromorphic connections with poles along $D$. In that case, $E$ is by definition a direct sum of the form $\bigoplus E_a$ where $E_a$ is a  Deligne lattice \cite{Del} in $\mathcal{R}_a$. In that case, the sought-after stability is obvious. This  finishes the proof of lemma \ref{malgrange}.
\end{proof}

\begin{remarque}\label{lienladic}
The bound for nearby slopes proved in proposition \ref{sharpbound} was suggested by the $\ell$-adic picture \cite{HuTeyssier}.  In \textit{loc. it.} indeed, a similar bound was obtained for $\ell$-adic nearby slopes of smooth morphisms \cite{Teyladic}. In that setting, the main tools are Beilinson's and Saito's work on the singular support \cite{Bei} and the characteristic cycle \cite{SaitoInv} for $\ell$-adic sheaves, as well as semi-continuity properties \cite{HuYang}\cite{HuCurves} for various ramification invariants produced by Abbes and Saito's ramification theory \cite{AbbesSaitoImperfect}. From this perspective, proposition \ref{sharpbound} is a positive answer to a local variant for differential equations of a conjecture in \cite{Leal} on the  ramification of the étale cohomology groups for local systems on the generic fiber of a strictly semi-stable pair. See Conjecture 5.8 from \cite{HuTeyssier} for a precise statement.
\end{remarque}

\subsection{Boundedness of the turning locus in the case of smooth proper relative curves}
This subsection is devoted to the proof of Theorem \ref{boundedness}. Let $S$ be a smooth complex algebraic curve. Let $p: \mathcal{C}\longrightarrow S$ be a relative smooth proper curve of genus $g$. Let $\mathcal{M}$ be a meromorphic connection of rank $r$ on  $\mathcal{C}$ with poles along the fibre $\mathcal{C}_0$. Let $Z(\mathcal{M})$ be  the subset of points in 
$\mathcal{C}_0$ at which $\mathcal{M}$ does not have good formal structure (that is, the turning locus of $\mathcal{M}$). Let $\irr_{\mathcal{C}_0}\mathcal{M}$ be the generic irregularity of $\mathcal{M}$ along $\mathcal{C}_0$. Let $r_D(\mathcal{M})$ be the highest generic slope of $\mathcal{M}$ along $\mathcal{C}_0$. We put
$$
K:= (\Sol \mathcal{M})_{|\mathcal{C}_0}[1] \oplus  (\Sol \End\mathcal{M})_{|\mathcal{C}_0}[1]
$$
Then, $K$ is a complex of $\mathds{C}$-vector spaces on $\mathcal{C}_0$ with constructible cohomology. It is concentrated in degree $0$ and $1$. The generic rank of $K$ is 
\begin{align*}
r_{K}&=\irr_{\mathcal{C}_0}\mathcal{M}+\irr_{\mathcal{C}_0}\End\mathcal{M}\\
       & \leq r r_D(\mathcal{M} )+ r^2 r_D(\End\mathcal{M} )\\
       &\leq 2r^2 r_D(\mathcal{M} )
\end{align*}
where the last inequality comes from $r_D(\End\mathcal{M} )\leq r_D(\mathcal{M} )$. The Euler-Poincaré characteristic formula \cite[Th. 2.2.1.2]{Laumon} applied to $K$ gives
$$
\chi(\mathcal{C}_0, K)=(2-2g)r_{K}-\displaystyle{\sum_{x\in \Sing K  } }  (r_{K}-\dim \mathcal{H}^0K_x )+\dim \mathcal{H}^1K_x
$$
where $\Sing K$ denotes the singular locus of $K$, that is the subset of points in $\mathcal{C}_0$ in the neighbourhood of which $K$ is not a local system concentrated in degree $0$. From Mebkhout perversity theorem \cite{Mehbgro}, the complex $K$ is perverse. In particular, $\mathcal{H}^0K$ does not have sections with punctual support. Thus, 
$$
r_{K}-\dim \mathcal{H}^0K_x\geq 0
$$
for every $x\in \Sing K$. From perversity again \cite[13.1.6]{phdteyssier}, the local Euler-Poincaré characteristic of $K$ at $x\in \Sing K$ differs from its generic value $r_{K}$. Hence, for $x\in \Sing K$, the quantity
$$
(r_{K}-\dim \mathcal{H}^0K_x )+\dim \mathcal{H}^1K_x
$$
is positive and non zero. It is thus strictly positive. Hence, we have a bound
$$
 |\Sing K| \leq (2-2g)r_{K} - \chi(\mathcal{C}_0, K)
$$
From Theorem \ref{mainth}, the singular points of $K$ are exactly the points  in $\mathcal{C}_0$ at which $\mathcal{M}$ does not have good formal structure. Hence
$$
| Z(\mathcal{M})| \leq 2r_{K} + |\chi(\mathcal{C}_0, K)|
$$
We are now left to bound $\chi(\mathcal{C}_0, K)$. Since the irregularity complex is compatible with proper push-forward \cite[3.6-6]{Mehbsmf}, we have
\begin{align*}
|\chi(\mathcal{C}_0, (\Sol \mathcal{M})_{|\mathcal{C}_0}) |&=|\chi(0, Rp_{\ast }(\Sol \mathcal{M})_{|\mathcal{C}_0})|\\
&=|\chi(0, (\Sol p_+ \mathcal{M})_{|0})|\\
                                  &=|\sum_i (-1)^{i}\irr_0 \mathcal{H}^i p_+\mathcal{M}|\\
                                   &\leq \sum_i  \irr_0 \mathcal{H}^i p_+\mathcal{M}\\
                                  &\leq \sum_i\rk\mathcal{H}^i p_+\mathcal{M} 
                                 \times \Max \Sl^{\nb}_t(\mathcal{H}^ip_+\mathcal{M})\\
                                 &\leq \sum_i\rk\mathcal{H}^i p_+\mathcal{M} 
                                 \times \Max \Sl^{\nb}_t(p_+\mathcal{M})
\end{align*}
Since nearby slopes are compatible with proper push-forward \cite[Th. 3 $(ii)$]{NearbySlope}, we have $\Sl^{\nb}_t(p_+\mathcal{M}) \subset \Sl^{\nb}_p(\mathcal{M})$. Since $p$ is smooth, proposition \ref{sharpbound} yields
$$
|\chi(\mathcal{C}_0, (\Sol \mathcal{M})_{|\mathcal{C}_0}) |\leq 
r_D(\mathcal{M} )\sum_i\rk\mathcal{H}^i p_+\mathcal{M}                                 
$$
For a generic point $s\in S$, we have furthermore
\begin{align*}
\sum_i\rk\mathcal{H}^i p_+\mathcal{M} &=\sum_i\dim (\Sol \mathcal{H}^i p_+\mathcal{M})_s \\
                                & = \sum_i\dim (\mathcal{H}^i\Sol  p_+\mathcal{M})_s\\
                                & = \sum_i\dim (R^ip_\ast\Sol  \mathcal{M})_s \\
                                &=\sum_i\dim H^{i}(\mathcal{C}_s,\mathcal{L} )
\end{align*}
where $\mathcal{L}$ denotes the local system of solutions of  $\mathcal{M}_{|\mathcal{C}_s}$. Then $H^{i}(\mathcal{C}_s,\mathcal{L} )=0$ for every $i\neq 0, 1, 2$ and we have
$$
\dim H^{0}(\mathcal{C}_s,\mathcal{L} )\leq \rk \mathcal{L}=   \rk \mathcal{M}_{|\mathcal{C}_s}= r
$$
From Poincaré-Verdier duality, we have
$$
\dim H^{2}(\mathcal{C}_s,\mathcal{L} )= \dim H^{0}(\mathcal{C}_s,\mathcal{L}^{\ast} )\leq \rk \mathcal{L}^{\ast}=  \rk \mathcal{M}_{|\mathcal{C}_s}= r
$$
Finally, 
\begin{align*}
\dim H^{1}(\mathcal{C}_s,\mathcal{L} ) & = -\chi(\mathcal{C}_s,\mathcal{L} ) +\dim H^{0}(\mathcal{C}_s,\mathcal{L} )+\dim H^{2}(\mathcal{C}_s,\mathcal{L} )\\
                               &= -\chi(\mathcal{C}_s, \mathds{C})\rk \mathcal{L}+\dim H^{0}(\mathcal{C}_s,\mathcal{L} )+\dim H^{2}(\mathcal{C}_s,\mathcal{L} ) \\
                               &\leq 2r(g+1)                               
\end{align*}
Putting everything together yields
$$
| Z(\mathcal{M})| \leq 8r^2(g+1)r_D(\mathcal{M} )
$$
This finishes the proof of Theorem \ref{boundedness}.

\bibliographystyle{amsalpha}
%on donne le style de bibliographie souhaité%
\bibliography{Saito-Sato-Seminar}
%on donne le nom du fichier bib dans lequel se trouvent les références%

\end{document}